\newtheorem{thm}{Theorem}[section]
\newtheorem{lem}[thm]{Lemma}
\newtheorem{prop}[thm]{Proposition}
\theoremstyle{definition}
\newtheorem{exmp}[thm]{Example}
\theoremstyle{remark}
\newtheorem{rem}[thm]{Remark}
\numberwithin{equation}{section}
\def\1{\mathbbm{1}}
\renewcommand{\qed}{\unskip\nobreak\quad\qedsymbol}
\newcommand{\beq}{\begin{equation}}
\newcommand{\eeq}{\end{equation}}
\begin{document}
\title{Absolute and Unconditional Convergence of Series \linebreak of Ergodic Averages and Lebesgue Derivatives}

\author{Bryan Johnson}
\address{Alaska Department of Transportation\\
Fairbanks, Alaska}
\email{bryan.johnson@alaska.gov}

\author{Joseph Rosenblatt}
\address{Dept of Mathematics\\
University of Illinois at Urbana-Champaign\\
1409 W. Green St\\
Urbana, IL 61801}
\email{rosnbltt@illinois.edu}

\subjclass[2020]{Primary: 28D05, 42A61; Secondary:  37A46}
\copyrightinfo{2020}{AMS}

\date{\today}

\begin{abstract}  We consider when there is absolute or unconditional convergence of series of various types of stochastic processes.  These processes include differences of  averages in ergodic theory and harmonic analysis, like the classical Ces\`aro average in ergodic theory and Lebesgue derivatives in harmonic analysis.   
\end{abstract}
\maketitle

\section{Introduction}\label{1}

Unconditional convergence of a series in a Banach space has a number of different equivalent formulations. We are interested specifically in series of the form $\sum\limits_{k=1}^{\infty} M_{n_{k+1}} f-M_{n_{k}} f$ where $\left(n_{k}\right)$ is a non-decreasing sequence of whole numbers, and $M_{n}$ denotes an average, like the usual average in ergodic theory. It turns out that unconditional convergence of such differences depends on spectral properties of the averages. Also of interest in this paper will be series of the form $\sum\limits_{k=1}^{\infty}D_{\epsilon_{k+1}} f-D_{\epsilon_{k}} f$ where $D_{\epsilon} f$ is defined to be the average of $f$ over a small interval.

Section~\ref{2} contains some basic definitions and characterizations of unconditional convergence, weak unconditional convergence, and absolute convergence that will be used. It will be shown that for any sequence $\left(\epsilon_{k}\right)$ going to zero, absolute convergence of the series of operators $\sum\limits_{k=1}^\infty D_{\epsilon_{k+1}}f -D_{\epsilon_{k}}f$ fails in general whether viewed as operators on $L^2(\mathbb{R})$ or on $L^2(\mathbb{Z})$. Similarly, it will also be shown that for any sequence of integers $\left(n_{k}\right)$ going to $\infty$, absolute convergence of the series of operators $\sum\limits_{k=1}^{\infty} M_{n_{k+1}}f - M_{n_{k}}f$ also fails in general. Furthermore, it is shown that for any sequence of integers $\left(n_{k}\right)$ going to infinity, there is an $f$ in $L^1(X)$ such that weak unconditional convergence of $\sum\limits_{k=1}^{\infty} M_{n_{k+1}} f-M_{n_{k}} f$ fails in $L^1(X)$.

Section~\ref{3} contains an interesting spectral characterization for determining when $\sum\limits_{k=1}^{\infty} M_{n_{k+1}} f-M_{n_{k}} f$ is unconditionally convergent for all $f$ in $L^2(X)$. It also contains examples of when this characterization holds and when it fails.

Section~\ref{4} is about sequences of differences using probability measures. It is noted that there is also a spectral characterization like the one in Section~\ref{3} for determining unconditional convergence in this situation. Among other things, it is shown here that if the spectrum of $\mu$ is contained in a Stolz region, then for all $f$ in $L^2(X)$, unconditional convergence of $\sum\limits_{k=1}^{\infty} \mu^{n_{k+1}} f-\mu^{n_{k}} f$ occurs for all sequences $\left(n_{k}\right)$ going to infinity. If the measure $\mu$ is only assumed to be strictly aperiodic however, it turns out that we are only guaranteed unconditional convergence for certain sequences $\left(n_{k}\right)$ going to infinity. There is also a result which shows that if $\mu$ is only assumed to be strictly aperiodic, and $\left(w_{k}\right)$ is any sequence of reals going to infinity, then unconditional convergence of the weighted differences $\sum\limits_{k=1}^{\infty}w_k (\mu^{n_{k+1}} f-\mu^{n_{k}} f)$ can fail when $2 n_{k}{ }^{p} \geq n_{k+1} \geq n_{k}{ }^{p}$ for $p>1$.

Section~\ref{5} shows that the spectral characterizations for unconditional convergence of $\sum\limits_{k=1}^{\infty} D_{\epsilon_{k+1}} f-D_{\epsilon_{k}} f$ for $f$ in $L^2(\mathbb{T}), L^2(\mathbb{R})$ and $H_{2}(\mathbb{T})$ are all the same. Also presented here are examples comparable to those given in Section~\ref{2} for which this spectral characterization holds and fails. Lastly, it is noted that if $E_{\epsilon} f$ is defined to be $(D_{\epsilon}\circ D_{\epsilon})f$, then unconditional convergence of $\sum\limits_{k=1}^{\infty} E_{\epsilon_{k+1}} f-E_{\epsilon_{k}} f$ for $f$ in $L^2(\mathbb{T})$ will occur for all sequences of $\epsilon_{k}$ going to zero.

Section~\ref{6} uses interpolation to show that in the case of $\left(n_{k}\right)$ being lacunary, not only is $\sum\limits_{k=1}^{\infty} M_{n_{k+1}} f-M_{n_{k}} f$ unconditionally convergent in $L^p$-norm for all $f\in L^p(X)$ for $p=2$, as is shown in Section~\ref{2}, but this also holds  for all $p, 1<p<\infty$. In Section~\ref{1}, the case of $p=1$ was already shown to never hold for any increasing $n_k$.  It is also shown that for any sequence of positive numbers $\left(\epsilon_{k}\right)$ satisfying $\epsilon_{k+1} \leq r \epsilon_{k}$ where $r<1$ and for any $1<p<\infty$ then for any $f\in L^p$, the series $\sum\limits_{k=1}^{\infty} D_{\epsilon_{k+1}} f-D_{\epsilon_{k}} f$ is unconditionally convergent in $L^p$-norm, extending the $p=2$ version of this which was proved in Section~\ref{4}.

Section~\ref{7} examines what happens when the series have terms like the ergodic averages themselves, in general and for specific dynamical systems.  Now the absolute and unconditional convergence is typically only a first Baire category phenomenon.  But it is not so clear when unconditional convergence can happen with absolute convergence failing.

Section~\ref{8} gives a list a few of the questions that have come up when pursuing the research in this article.  These questions, and their answers, might bring into sharper focus what some of the issues are that come up when considering the results given here.

\section{Unconditional Convergence}\label{2}
A series $\sum\limits_{k=1}^{\infty} f_{k}$ in a Banach space $X$ is said to be unconditionally convergent if for all permutations $\pi$ of $\{1,2,3, \ldots\}$ the series $\sum\limits_{k=1}^{\infty} f_{\pi(k)}$ converges in the norm topology. The series is weakly unconditionally convergent if for all elements $f^{*}$ in the dual space of $X$, the series $\sum\limits_{k=1}^{\infty} f^{*}\left(f_{k}\right)$ converges unconditionally. We have the following two theorems. See Lindenstrauss and Tzafriri [10] and Day [7] for the background information.

\begin{thm}\label{2.1}  For a series $\sum\limits_{k=1}^{\infty} f_{k}$ in a Banach space, the following are equivalent:

(a) $\sum\limits_{k=1}^{\infty} f_{k}$ is unconditionally convergent,

(b) $\quad \sum\limits_{k=1}^{\infty} c_{k} f_{k}$ converges for all sequences $\left(c_{k}\right),\left|c_{k}\right| \leq 1$ for all $k=1,2,3, \ldots$,

(c) $\quad \lim _{m, n \rightarrow \infty} \sup _{\left|c_{k}\right| \leq 1}\left\|\sum\limits_{k=m}^{n} c_{k} f_{k}\right\|=0$.
\end{thm}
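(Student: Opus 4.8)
The plan is to prove the standard equivalence of the three characterizations of unconditional convergence by a cycle of implications, treating the hard direction via a gliding-hump / Cauchy-criterion argument. I would first set up notation: let $S = \sum_{k=1}^\infty f_k$ and recall that a series is unconditionally convergent iff it satisfies the subseries Cauchy condition, namely for every $\vep>0$ there is $N$ so that $\|\sum_{k\in F} f_k\| < \vep$ for every finite set $F \subset \{N, N+1, \dots\}$. The cleanest route is (a) $\Rightarrow$ (c) $\Rightarrow$ (b) $\Rightarrow$ (a), with the subseries criterion serving as the bridge.

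For (a) $\Rightarrow$ (c): I would argue by contradiction. If (c) fails, then there is $\vep_0 > 0$ and, for every $N$, indices $m \le n$ with $N \le m$ and scalars $|c_k|\le 1$ such that $\|\sum_{k=m}^n c_k f_k\| \ge \vep_0$. By splitting $c_k$ into real and imaginary parts, and each of those into positive and negative parts (each bounded by $1$), one reduces to the case $0 \le c_k \le 1$; then a further averaging/convexity argument over $\{0,1\}$-valued coefficients shows one may assume $c_k \in \{0,1\}$, i.e. there is a finite block $F$ with $\|\sum_{k\in F} f_k\| \ge \vep_0'$ for some fixed $\vep_0' > 0$ and $F$ arbitrarily far out. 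Iterating produces disjoint finite blocks $F_1 < F_2 < \cdots$ with $\|\sum_{k\in F_j} f_k\| \ge \vep_0'$; interleaving these blocks (taken in order) against the complementary indices yields a permutation $\pi$ of $\N$ for which the partial sums of $\sum f_{\pi(k)}$ are not Cauchy, contradicting (a).

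For (c) $\Rightarrow$ (b): given scalars $|c_k|\le 1$, condition (c) says exactly that $\sup_{|d_k|\le 1}\|\sum_{k=m}^n d_k f_k\| \to 0$ as $m,n\to\infty$, and since $|c_k| \le 1$ this forces the tails $\|\sum_{k=m}^n c_k f_k\|$ to go to zero; hence $\sum c_k f_k$ is Cauchy, so convergent by completeness of $X$. For (b) $\Rightarrow$ (a): given a permutation $\pi$, one checks the partial sums $\sum_{k=1}^m f_{\pi(k)}$ are Cauchy. If not, there is $\vep_0>0$ and blocks $G_j$ of consecutive $\pi$-indices, pushed to infinity, with $\|\sum_{k\in G_j} f_k\| \ge \vep_0$; choosing signs $\delta_j \in \{+1,-1\}$ and setting $c_k = \delta_j$ for $k \in G_j$ (and $c_k$ arbitrary, say $0$, elsewhere) gives a coefficient sequence in the closed unit ball of $\ell^\infty$ for which $\sum c_k f_k$ fails the Cauchy criterion — or, more carefully, one applies (b) to $\sum (1_{G}(k)) f_k$ along a suitable index set $G = \bigcup_j G_j$, whose convergence is incompatible with the block sums being bounded below; this contradicts (b). Completeness of $X$ is used throughout to pass from Cauchy to convergent.

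The main obstacle is the reduction, inside (a) $\Rightarrow$ (c), from arbitrary complex coefficients with $|c_k|\le 1$ down to $\{0,1\}$-valued coefficients: one must be careful that the constant $\vep_0'$ does not degrade to zero through the four-fold real/imaginary/sign splitting and the subsequent extraction of a $0$-$1$ block, and that the blocks produced really can be made disjoint and cofinal so that they assemble into a genuine permutation. This is the classical Orlicz–Pettis-flavored combinatorics; I would cite Lindenstrauss–Tzafriri [10] or Day [7] for the routine parts and present only the gliding-hump extraction in detail. Everything else is bookkeeping with the Cauchy criterion and completeness.
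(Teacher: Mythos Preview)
The paper does not prove Theorem~\ref{2.1}: it is stated as background material with a pointer to Lindenstrauss--Tzafriri~[10] and Day~[7], so there is no ``paper's own proof'' to compare against. Your sketch is the standard argument one finds in those references, and the cycle (a) $\Rightarrow$ (c) $\Rightarrow$ (b) $\Rightarrow$ (a) together with the convexity reduction from $|c_k|\le 1$ to $c_k\in\{0,1\}$ (losing at worst a factor of $4$) is correct.

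One small point worth tightening in (b) $\Rightarrow$ (a): the blocks $G_j = \{\pi(m_j),\dots,\pi(n_j)\}$ are not a~priori intervals or ordered in the original indexing, so you need the observation that $\min G_j \to \infty$ (since $\pi^{-1}\{1,\dots,N\}$ is finite for each $N$) and then pass to a subsequence with $\max G_j < \min G_{j+1}$. Only then does $G=\bigcup_j G_j$ have the property that each $G_j$ appears as a consecutive run in $\sum_k 1_G(k)f_k$, so that the Cauchy criterion for this subseries is genuinely violated. You gesture at this (``blocks \dots\ pushed to infinity'') but the step deserves one explicit sentence; otherwise the argument is complete.
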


\noindent {\bf Remark}: As a consequence of this, if $\sum\limits_{k=1}^{\infty} f_{k}$ is unconditionally convergent, then we would have that $\sup _{M \geq 1} \sup _{\left|c_{k}\right| \leq 1}\left\|\sum\limits_{k=1}^{M} c_{k} f_{k}\right\|$ is bounded and $\sup _{\left|c_{k}\right| \leq 1}\left\|\sum\limits_{k=1}^{\infty} c_{k} f_{k}\right\|$ is bounded. \qed

\begin{thm}\label{2.2}  For a series $\sum\limits_{k=1}^{\infty} f_{k}$ in a Banach space, the following are equivalent:

(a) $\sum\limits_{k=1}^{\infty} f_{k}$ is weakly unconditionally convergent,

(b) $\quad \sup _{M \geq 1} \sup _{\left|c_{k}\right| \leq 1}\left\|\sum\limits_{k=1}^{M} c_{k} f_{k}\right\|$ is bounded.
\end{thm}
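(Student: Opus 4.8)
The plan is to first recast weak unconditional convergence in a more concrete form. Since a scalar series converges unconditionally if and only if it converges absolutely, condition (a) is equivalent to the statement that $\sum_{k=1}^\infty |f^*(f_k)| < \infty$ for every $f^* \in X^*$. With this reformulation in hand, the implication (b) $\Rightarrow$ (a) is immediate: given $f^* \in X^*$ and $M \geq 1$, choose unimodular scalars $c_k$ with $c_k f^*(f_k) = |f^*(f_k)|$; then $\sum_{k=1}^M |f^*(f_k)| = f^*\big(\sum_{k=1}^M c_k f_k\big) \leq \|f^*\|\, \sup_{M \geq 1}\sup_{|c_k| \leq 1}\big\|\sum_{k=1}^M c_k f_k\big\|$, and letting $M \to \infty$ gives absolute convergence of $\sum_k f^*(f_k)$, hence weak unconditional convergence.

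For (a) $\Rightarrow$ (b), the idea is to bring in the uniform boundedness principle. For each $M$ let $T_M$ be the linear operator from $\C^M$ with the sup-norm into $X$ defined by $T_M(c_1,\dots,c_M) = \sum_{k=1}^M c_k f_k$, so that $\|T_M\| = \sup_{|c_k| \leq 1}\big\|\sum_{k=1}^M c_k f_k\big\|$; by duality this norm equals $\|T_M^*\|$, where $T_M^* \colon X^* \to \C^M$ (with the $\ell^1$-norm) sends $f^*$ to $(f^*(f_1),\dots,f^*(f_M))$, i.e. $\|T_M\| = \sup_{\|f^*\|\leq 1}\sum_{k=1}^M |f^*(f_k)|$. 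Now for each fixed $f^* \in X^*$, the reformulation of (a) gives $\sup_M \|T_M^* f^*\|_{1} = \sum_{k=1}^\infty |f^*(f_k)| < \infty$, so the family $\{T_M^*\}_M$ is pointwise bounded on the Banach space $X^*$. By the uniform boundedness principle, $\sup_M \|T_M^*\| < \infty$, hence $\sup_M \|T_M\| < \infty$, which is precisely (b).

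The only genuinely delicate points are bookkeeping ones: justifying the duality identity $\|T_M\| = \sup_{\|f^*\| \leq 1}\sum_{k=1}^M |f^*(f_k)|$ (a routine interchange of suprema over $\C$, using that one may extract unimodular coefficients), and — the real crux — observing that hypothesis (a) supplies exactly the pointwise bound needed to feed the uniform boundedness principle, which in turn rests on the classical fact that an unconditionally convergent scalar series is absolutely convergent. Once these are in place the argument is purely formal.
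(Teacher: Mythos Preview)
Your proof is correct. The paper itself does not give a proof of this theorem; it is stated as a background result with a reference to Lindenstrauss--Tzafriri and Day, so there is no in-paper argument to compare against, but your reformulation of weak unconditional convergence as $\sum_k |f^*(f_k)|<\infty$ for all $f^*$, together with the uniform boundedness principle applied to the adjoints $T_M^*$, is a standard and fully rigorous route to this classical equivalence.
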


Generally, weakly unconditionally convergent series are not unconditionally convergent. However, the two properties are equivalent in reflexive Banach spaces. In particular, for the reflexive spaces $L^p(X, m), 1<p<\infty$, they are equivalent. See for example Theorem 5 in Bessaga and Pelczy\'nski~\cite{BP}.  Hence, we have the following result.

\begin{thm}\label{2.3} If $\sum\limits_{k=1}^{\infty} f_{k}$ is a series in $L^p(X, m), 1<p<\infty$, then the following are equivalent:

(a) $\sum\limits_{k=1}^{\infty} f_{k}$ is unconditionally convergent,

(b) $\sup _{M \geq 1} \sup _{\left|c_{k}\right| \leq 1}\left\|\sum\limits_{k=1}^{M} c_{k} f_{k}\right\|_{p}$ is bounded.
\end{thm}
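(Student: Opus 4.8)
The plan is to deduce Theorem~\ref{2.3} directly from Theorems~\ref{2.1} and \ref{2.2} together with the reflexivity of $L^p(X,m)$ for $1<p<\infty$. The only substantive input beyond the previous two theorems is the fact, cited above from Bessaga--Pelczy\'nski, that in a Banach space containing no subspace isomorphic to $c_0$ --- in particular in any reflexive space --- every weakly unconditionally convergent (equivalently, weakly unconditionally Cauchy) series is unconditionally convergent. So the proof reduces to chaining together equivalences that are already available.

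The key steps, in order, are as follows. First, observe that $L^p(X,m)$ for $1<p<\infty$ is reflexive; hence by the Bessaga--Pelczy\'nski theorem (Theorem~5 of \cite{BP}), a series in $L^p(X,m)$ is unconditionally convergent if and only if it is weakly unconditionally convergent. Second, apply Theorem~\ref{2.2}: weak unconditional convergence of $\sum_{k=1}^\infty f_k$ is equivalent to the boundedness of $\sup_{M\ge 1}\sup_{|c_k|\le 1}\bigl\|\sum_{k=1}^M c_k f_k\bigr\|$. Combining these two equivalences gives exactly the asserted equivalence of (a) and (b), now with the norm being the $L^p$-norm $\|\cdot\|_p$.

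For completeness I would also remark on the forward direction independently of the $c_0$-theorem, since it clarifies where reflexivity is genuinely needed: if $\sum_{k=1}^\infty f_k$ is unconditionally convergent in any Banach space, then by Theorem~\ref{2.1} (and the Remark following it) the quantity $\sup_{M\ge1}\sup_{|c_k|\le1}\bigl\|\sum_{k=1}^M c_k f_k\bigr\|$ is automatically bounded, so (a) $\Rightarrow$ (b) holds with no hypothesis on $X$. It is only the converse (b) $\Rightarrow$ (a) --- equivalently, the implication that weak unconditional convergence forces unconditional convergence --- that uses reflexivity, and this is precisely the content we are importing from Bessaga--Pelczy\'nski.

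The main (and essentially only) obstacle is that there is no real obstacle internal to the argument: the entire weight of the theorem rests on the cited structural result about series in reflexive spaces, so the ``proof'' is a short synthesis rather than a computation. The one point deserving a word of care is making sure the quantifier ``for all sequences $(c_k)$ with $|c_k|\le1$'' in Theorem~\ref{2.1}(b) is read as allowing complex (or real, as appropriate to the scalar field of $L^p$) coefficients, so that the equivalence with the supremum condition in (b) is literally the same statement as in Theorem~\ref{2.2}(b); once that bookkeeping is in place, the result follows immediately.
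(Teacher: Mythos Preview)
Your proposal is correct and follows exactly the approach the paper takes: the paper simply remarks that in reflexive spaces (hence in $L^p$ for $1<p<\infty$) weak unconditional convergence and unconditional convergence coincide by Bessaga--Pelczy\'nski, and then Theorem~\ref{2.2} gives the equivalence with (b). Your additional observation that (a)$\Rightarrow$(b) already follows from Theorem~\ref{2.1} without reflexivity is a helpful clarification not made explicit in the paper.
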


Suppose that $\left(T_{k}\right)$ is a sequence of bounded linear operators on $L^p(X, m)$. Then the above gives this homogeneous version of unconditional convergence.

\begin{thm}\label{2.4}  The following are equivalent when $1<p<\infty$ :

(a) for all $f \in L^p(X, m), \sum\limits_{k=1}^{\infty} T_{k} f$ is unconditionally convergent,

(b) there is a constant $C$ such that for all $f \in L^p(X, m)$,

$$
\sup _{M \geq 1,\left|c_{k}\right| \leq 1}\left\|\sum\limits_{k=1}^{M} c_{k} T_{k} f\right\|_{p} \leq C\|f\|_{p}
$$
\end{thm}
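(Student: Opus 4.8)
The plan is to derive this from Theorem~\ref{2.3} by the standard closed-graph / uniform-boundedness argument that converts a pointwise (``for all $f$'') statement into a uniform one, using the fact that each $T_k$ is a bounded linear operator and that $L^p(X,m)$ is a Banach space for $1<p<\infty$.

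First I would record the easy direction, (b)$\Rightarrow$(a): if the displayed inequality holds with constant $C$, then for every fixed $f$ the quantity $\sup_{M\ge 1,|c_k|\le 1}\|\sum_{k=1}^M c_k T_k f\|_p$ is bounded (by $C\|f\|_p$), so by the equivalence (a)$\Leftrightarrow$(b) in Theorem~\ref{2.3} applied to the series $\sum_k T_k f$, that series is unconditionally convergent. This is immediate and requires no real work.

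For the main direction, (a)$\Rightarrow$(b), I would argue by contradiction combined with the uniform boundedness principle. Assume (a) holds. For each positive integer $N$ define the sublinear (in fact, it is a supremum of norms of linear maps, hence subadditive and positively homogeneous) functional
\[
S_N f \;=\; \sup_{1\le M\le N,\ |c_k|\le 1}\Bigl\|\sum_{k=1}^{M} c_k T_k f\Bigr\|_p .
\]
Each $S_N$ is finite-valued (a finite sup over $M$, and the inner sup over $|c_k|\le 1$ is attained since the expression is continuous in finitely many complex variables ranging over a compact set) and is a continuous seminorm on $L^p$, bounded by $\bigl(\sum_{k=1}^N \|T_k\|\bigr)\|f\|_p$. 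Now set $S f=\sup_N S_N f = \sup_{M\ge 1,|c_k|\le 1}\|\sum_{k=1}^M c_k T_k f\|_p$. By hypothesis (a) together with the Remark following Theorem~\ref{2.1} (applied to the series $\sum_k T_k f$, which is unconditionally convergent), $Sf<\infty$ for every $f\in L^p(X,m)$. Thus $S$ is a pointwise-finite supremum of continuous seminorms, so by the uniform boundedness principle (in the seminorm form: a pointwise-bounded family of continuous seminorms on a Banach space is uniformly bounded on the unit ball) there is a constant $C$ with $Sf\le C\|f\|_p$ for all $f$, which is exactly (b).

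The step I expect to be the main obstacle — or at least the one needing the most care — is justifying that each $S_N$ (and hence the relevant family) is a genuine continuous seminorm to which the uniform boundedness principle applies: one must check finiteness of the inner supremum over $\{|c_k|\le1\}$ and lower semicontinuity / continuity of $S$ as a pointwise sup, so that the Baire-category argument behind uniform boundedness goes through. An alternative, perhaps cleaner, route avoids this by a direct contradiction: if no such $C$ existed, choose $f_j$ with $\|f_j\|_p\le 2^{-j}$ but $S f_j\ge 1$ for each $j$ (after normalizing), set $f=\sum_j f_j$, and note that a partial-sum estimate together with the triangle inequality forces $Sf=\infty$, contradicting that $\sum_k T_k f$ is unconditionally convergent. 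Either way, once the uniform bound is in hand, invoking Theorem~\ref{2.3} in the forward direction on each series $\sum_k T_k f$ closes the loop.
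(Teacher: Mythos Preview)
Your main argument is correct and is essentially the same as the paper's: both derive (a)$\Rightarrow$(b) from Baire category, you by invoking the uniform-boundedness principle for continuous seminorms, the paper by carrying out the Baire-category step explicitly (showing the sets $B_L=\{f:\sup_{M,|c_k|\le1}\|\sum_{k=1}^M c_k T_k f\|_p\le L\}$ are closed, hence some $B_{L_0}$ has interior, then using homogeneity). Your verification that each $S_N$ is a continuous seminorm is exactly what is needed, and the paper's explicit argument is just the proof of the UBP specialized to this family.

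One caveat: the ``alternative, perhaps cleaner'' route you sketch at the end does not work as written. If you only know $S f_j\ge 1$ and $\|f_j\|_p\le 2^{-j}$, subadditivity of $S$ gives $Sf\le\sum_j Sf_j$, not $\ge$; to force $Sf=\infty$ you would need the reverse inequality, which you do not have. A genuine gliding-hump version would require choosing the $f_j$ so that the ``large'' partial sums for different $j$ use disjoint ranges of indices $k$ and then arguing more carefully, which is substantially more work than the UBP route. Since this is only an aside and your primary argument is sound, this does not affect the correctness of the proposal.
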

\begin{proof}
Clearly, for a fixed $f \in L^p(X, m)$, (b) implies (b) in Theorem~\ref{2.3}, and so $\sum\limits_{k=1}^{\infty} T_{k} f$ is unconditionally convergent by Theorem~\ref{2.3}. Conversely, assume (a). Then for each $f$, $\left\|\sum\limits_{k=1}^{M} c_{k} T_{k} f\right\|_{p}$ is bounded independently of $M$ and $\left(c_{k}\right),\left|c_{k}\right| \leq 1$. Consider the set

$$
B_{L}=\left\{f \in L^p: \sup _{M \geq 1,\left|c_{k}\right| \leq 1}  \left\|\sum\limits_{k=1}^M c_k T_{k} f\right \|_{p} \leq L\right\}
$$

\noindent This is a closed set in the norm topology because it is the intersection over all $M$ and $\left(c_{k}\right),\left|c_{k}\right| \leq 1$ of the sets

$$
B_{L}\left(M,\left(c_{k}\right)\right)=\left\{f \in L^p:\left\|\sum\limits_{k=1}^{M} c_{k} T_{k} f\right\|_{p} \leq L\right\}
$$

\noindent and each of these sets is closed in the norm topology. So by the Baire Category Theorem, there exists some $L_{0}$ such that $B_{L_{0}}$ has non-empty interior. That is, for some $f_{0} \in L^p$, some $\delta_{0}>0$, if $\|f\|_{p} \leq \delta_{0}$, then $f+f_{0} \in B_{L_{0}}$. Hence, if $\|f\|_{p} \leq \delta_{0}$, then for all $M \geq 1$ and $\left(c_{k}\right),\left|c_{k}\right| \leq 1$,

$$
\begin{aligned}
\left\|\sum\limits_{k=1}^{M} c_{k} T_{k} f\right\|_{p} & \leq\left\|\sum\limits_{k=1}^{M} c_{k} T_{k} f_{0}\right\|_{p}+\left\|\sum\limits_{k=1}^{M} c_{k} T_{k}\left(f+f_{0}\right)\right\|_{p} \\
& \leq 2 L_{0} .
\end{aligned}
$$

Hence, if $0 \neq f \in L^p$, then for $M \geq 1$ and $\left(c_{k}\right),\left|c_{k}\right| \leq 1$,

$$
\left\|\sum\limits_{k=1}^{M} c_{k} T_{k}\left(\frac{f \delta_{0}}{\|f\|_{p}}\right)\right\|_{p} \leq 2 L_{0}
$$

That is,

$$
\left\|\sum\limits_{k=1}^{M} c_{k} T_{k} f\right\|_{p} \leq\left(\frac{2 L_{0}}{\delta_{0}}\right)\|f\|_{p}
$$
\hfill  \end{proof}

A series $\sum\limits_{k=1}^{\infty} x_{k}$ in a Banach space $X$ is said to be absolutely convergent if $\sum\limits_{k=1}^{\infty}\left\|x_{k}\right\|<\infty$ It is an elementary fact that absolute convergence of $(x_{k})$ implies unconditional convergence.  The converse however is not necessarily true. A standard example for this is in $\ell_{2}$ where we take $x_{k}=(0, \ldots, 0,1 / k, 0, \ldots, 0)$ where $1 / k$ is in the $k^{t h}$ position It is easy to see that for any permutation $\pi, \sum\limits x_{\pi(k)}$ converges to $(1,\frac 12,\frac 13,\frac 14, \ldots)$ but yet $\sum\limits_{k=1}^{\infty}\left\|x_{k}\right\|= \sum\limits_{k=1}^{\infty} 1 / k=\infty$.   It is actually much harder to construct an example of an unconditionally convergent series that is not absolutely convergent in $\ell_1$.  In any case, the famous Dvoretzky-Rogers Theorem shows that there are examples for this in every infinite dimensional Banach space. See Heil~\cite{Heil} for details, and further references.

The ergodic averages $M_{n} f=\frac{1}{n} \sum\limits_{k=1}^{n} f\left(\tau^{k} x\right)$ and the differentiation operator $D_{\epsilon} f=$ $\varphi_{\epsilon} * f$ have been known for quite some time to behave similarly. Here $D_{\epsilon} f=\varphi_{\epsilon} * f$ is defined so that either

$$
\varphi_{\epsilon}(\theta)= \begin{cases}1 / 2 \epsilon & \text { when }-\epsilon \leq \theta \leq+\epsilon \\ 0 & \text { otherwise }\end{cases}
$$

or

$$
\varphi_{\epsilon}\left(e^{i \theta}\right)= \begin{cases}1 / 2 \epsilon & \text { when }-\epsilon \leq \theta \leq+\epsilon \\ 0 & \text { otherwise }\end{cases}
$$

\noindent depending on whether or not $f$ is defined on $\mathbb{R}$ or on $\mathbb{T}$ respectively. The analogy between $M_{n} f$ and $D_{\epsilon} f$ starts to become apparent in comparing the Ergodic Theorem with the Lebesgue Differentiation Theorem. A significant part of this paper will consist of making further comparisons between the two in the context of unconditional convergence.

The following facts about the Fourier Transform of the differentiation operator will be used extensively: if $f$ is a function in $L^2(\mathbb{R})$, then

$$
\widehat{D_{\epsilon} f}(t)=\widehat{\varphi_{\epsilon}} \widehat{f}(t)=\frac{\sin t \epsilon}{t \epsilon} \widehat{f}(t)
$$

for all $t$ in $\mathbb{R}$. Whereas, if $f$ is a function in $L^2(\mathbb{T})$, then

$$
\widehat{D_{\epsilon} f}(\ell)=\widehat{\varphi_{\epsilon}}(\ell) \widehat{f}(\ell)=\frac{\sin \ell \epsilon}{\ell \epsilon} \widehat{f}(\ell)
$$

for all $\ell$ in $\mathbb{Z}$.

When discussing averages, functions on $\mathbb{Z}$ will be always be denoted by $\varphi$ so that $M_{n} \varphi(m)$ will be unambiguously defined to be $\frac{1}{n} \sum\limits_{k=1}^{n} \varphi(m+k)$. Also, $m_{n}(\gamma)$ will be defined by $\frac{1}{n} \sum\limits_{k=1}^{n} \gamma^{k}$ for all $\gamma \in \mathbb{T}$.

In light of the relationship between absolute and unconditional convergence, it is important to compare Example~\ref{5.7}  and Example~\ref{5.8} with the following results which show that one never has absolute convergence of the differences of the differentiation operators.

\begin{thm}\label{2.5}  If $\epsilon_{k}$ is any sequence of positive reals converging to zero and if $D_{\epsilon_{k}}$ is viewed as a sequence of operators from $L^2(\mathbb{R})$ to $L^2(\mathbb{R})$ then

$$
\sum\limits_{k=1}^{\infty}\left\|D_{\epsilon_{k+1}}-D_{\epsilon_{k}}\right\|_{2}=\infty
$$
\end{thm}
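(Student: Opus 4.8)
The plan is to compute the operator norm $\|D_{\epsilon_{k+1}}-D_{\epsilon_k}\|_2$ using the Fourier transform, and then show that the sum of these norms diverges regardless of how the $\epsilon_k$ are chosen. Since $D_\epsilon$ is a Fourier multiplier on $L^2(\R)$ with symbol $m_\epsilon(t)=\frac{\sin t\epsilon}{t\epsilon}$, Plancherel's theorem gives
$$
\|D_{\epsilon_{k+1}}-D_{\epsilon_k}\|_2 = \sup_{t\in\R}\left| \frac{\sin t\epsilon_{k+1}}{t\epsilon_{k+1}} - \frac{\sin t\epsilon_k}{t\epsilon_k}\right| = \sup_{s>0}\left| \frac{\sin(rs)}{rs} - \frac{\sin s}{s}\right|,
$$
where I have substituted $s = t\epsilon_k$ and written $r = \epsilon_{k+1}/\epsilon_k \in (0,1)$. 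So the whole problem reduces to a one-variable estimate: letting $g(u)=\frac{\sin u}{u}$ (with $g(0)=1$), I need a lower bound for $h(r):=\sup_{s>0}|g(rs)-g(s)|$ that is not summable when composed along the ratios $r_k=\epsilon_{k+1}/\epsilon_k$.

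The key step is to produce such a lower bound. There are two regimes. If $r$ is bounded away from $1$, say $r\le 1-\delta$, then $h(r)$ is bounded below by a positive constant depending only on $\delta$: for instance, evaluating at a point $s$ where $g(s)$ is near its global minimum $\approx -0.217$ while $g(rs)$ is comparably far away, or more simply, since $g(rs)\to 1$ as $s\to 0$ faster than... actually the cleanest choice is to pick $s_0$ with $g(s_0)<0$ and note that either $g(rs_0)$ or $g(s_0/r)$ differs from the other by a fixed amount; I will nail down a clean explicit $s$. If instead $r$ is close to $1$, write $r = 1-\eta$ with $\eta$ small; then I expand $g(rs)-g(s) \approx -\eta s\, g'(s)$ for moderate $s$, and since $s\,g'(s)$ is unbounded (it behaves like $\cos s - g(s)$ times... no: $s g'(s) = \cos s - \frac{\sin s}{s}$, which oscillates with amplitude near $1$ for large $s$), one can choose $s$ of size $\sim 1/\eta$ to make $|g(rs)-g(s)|$ bounded below by a fixed constant as well — one must check the higher-order terms in the Taylor expansion do not overwhelm this, which is where care is needed since $s$ grows. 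The upshot I expect is that $h(r)\ge c > 0$ for \emph{all} $r\in(0,1)$, with $c$ an absolute constant. Given that, $\sum_k \|D_{\epsilon_{k+1}}-D_{\epsilon_k}\|_2 \ge \sum_k c = \infty$ trivially.

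The main obstacle is the regime $r\to 1$: showing that $\sup_{s>0}|g(rs)-g(s)|$ does not tend to $0$ as $r\to1$. This is a genuine (if modest) analytic point — one is claiming the dilation operators $f\mapsto f(r\cdot)$ do not converge to the identity in operator norm on the range of these multipliers, uniformly. I would handle it by exploiting that $g$ does not decay fast: the difference $g(rs)-g(s)$ at $s\approx N\pi$ for large integer $N$, with $r = 1-\frac{1}{2N}$ say, moves the argument by roughly $\pi/2$, so $g(rs)$ and $g(s)$ land near consecutive extrema-type values of $\frac{\sin}{\cdot}$ at comparable magnitude $\sim 1/(N\pi)$ — wait, that magnitude is small. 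So the honest mechanism must instead be near $s=0$: there $g(s)-g(rs) \approx \frac{1-r^2}{6}s^2$ for small $s$, which is tiny when $r\to1$, so that fails too. The correct mechanism is intermediate $s$, of order $1$: there $g'(s)$ is bounded away from $0$ on some interval, so $g(rs)-g(s)\approx (r-1)s g'(s)$ — but this also $\to0$. Hence the statement as I've been sketching may actually need $h(r)\to 0$ as $r\to1$, and the theorem must rely on the \emph{sum} structure: $\sum_k(1-\epsilon_{k+1}/\epsilon_k)$ is typically infinite. So I revise: the real estimate should be $h(r)\gtrsim \min\{1,\;-\log r\}$ or $h(r)\gtrsim 1-r$ is too weak — rather, I should bound $\|D_{\epsilon_{k+1}}-D_{\epsilon_k}\|_2$ below using a \emph{fixed} test frequency scaled so the telescoping sum over $k$ of the pointwise differences $|m_{\epsilon_{k+1}}(t_0)-m_{\epsilon_k}(t_0)|$ at a cleverly chosen single $t_0$ (or countably many) diverges by comparison with a divergent series like $\sum 1/\epsilon_k$ behavior near a zero of $\sin$. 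I expect the author's argument picks, for each $k$, a frequency $t_k$ (depending on $\epsilon_k,\epsilon_{k+1}$) at which $m_{\epsilon_{k+1}} - m_{\epsilon_k}$ is of order $1$ or of order $\log(\epsilon_k/\epsilon_{k+1})$, and then a separate argument (splitting into the cases where ratios stay near $1$ versus not) forces divergence; the near-$1$ case is the delicate one and is the main thing I would need to get right.
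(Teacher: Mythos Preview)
Your setup is exactly right: the operator norm is the sup of the multiplier, and after the substitution you reduce to bounding $h(r)=\sup_{s>0}|g(rs)-g(s)|$ from below, with $r=\epsilon_{k+1}/\epsilon_k$. But you then talk yourself out of the correct argument. The paper's proof uses precisely the bound you dismiss as ``too weak'': it shows $h(r)\ge \tfrac{2}{\pi}(1-r)$ for all $r\in(0,1)$ (Lemma~\ref{2.8}), and then observes that $\sum_k\bigl(1-\tfrac{\epsilon_{k+1}}{\epsilon_k}\bigr)=\infty$ for \emph{every} sequence $\epsilon_k\downarrow 0$ (Lemma~\ref{2.9}).

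The two points you are missing are both elementary. First, the lower bound $h(r)\ge \tfrac{2}{\pi}(1-r)$ comes from a single clever test point: take $s=\pi$, so $g(s)=0$ and $h(r)\ge g(r\pi)=\tfrac{\sin(r\pi)}{r\pi}$; elementary estimates on $\sin$ then give $\tfrac{2}{\pi}(1-r)$. Second, and this is where your ``typically infinite'' hedging costs you the proof, the divergence $\sum_k(1-r_k)=\infty$ is \emph{automatic}: by the standard equivalence between convergence of $\sum(1-u_k)$ and positivity of $\prod(1-u_k)$, it suffices that $\prod_k r_k=\prod_k \tfrac{\epsilon_{k+1}}{\epsilon_k}=\lim_N \tfrac{\epsilon_{N+1}}{\epsilon_1}=0$. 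So the $1-r$ bound is not too weak at all; combined with the telescoping product it finishes the proof in one line. Your final proposed direction (fixed test frequencies, case splits on ratios) is unnecessary.
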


\begin{thm}\label{2.6}  If $\epsilon_{k}$ is any sequence of positive reals converging to zero and if $D_{\epsilon_{k}}$ is viewed as a sequence of operators from $L^2(\mathbb{T})$ to $L^2(\mathbb{T})$ then

$$
\sum\limits_{k=1}^{\infty}\left\|D_{\epsilon_{k+1}}-D_{\epsilon_{k}}\right\|_{2}=\infty
$$
\end{thm}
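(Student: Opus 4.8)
The plan is to translate the statement into the Fourier--multiplier picture on $L^2(\T)$ and bound the resulting supremum from below. Since $\{e^{i\ell\theta}\}_{\ell\in\Z}$ is an orthonormal basis of $L^2(\T)$ simultaneously diagonalising all the operators $D_\epsilon$, with $D_\epsilon e^{i\ell\theta}=\tfrac{\sin\ell\epsilon}{\ell\epsilon}e^{i\ell\theta}$ (eigenvalue $1$ at $\ell=0$), the operator $D_{\epsilon_{k+1}}-D_{\epsilon_k}$ is diagonal, and therefore
\[
\big\|D_{\epsilon_{k+1}}-D_{\epsilon_k}\big\|_2=\sup_{\ell\in\Z}\big|g(\ell\epsilon_{k+1})-g(\ell\epsilon_k)\big|,\qquad g(x):=\tfrac{\sin x}{x},\quad g(0):=1 .
\]
Writing $\rho_k:=\epsilon_{k+1}/\epsilon_k$, I would aim for a lower bound $\|D_{\epsilon_{k+1}}-D_{\epsilon_k}\|_2\ge c\min(1,|\rho_k-1|)$ valid for all sufficiently large $k$, with an absolute constant $c>0$. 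This is the torus counterpart of the estimate underlying Theorem~\ref{2.5}, the one genuinely new feature being that the supremum is now taken only over the dilated integers $\epsilon_k\Z$; that costs only a constant factor precisely because $\epsilon_k\to0$, so $\epsilon_k\Z$ is a mesh whose width tends to $0$.

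The continuous input is the inequality $\sup_{t>0}|g(\rho t)-g(t)|\ge\tfrac{|\rho-1|}{\pi}$ for $\rho=1+\delta$ with $|\delta|$ below some absolute $\delta_0$. I would deduce this from the identity
\[
g(\rho t)-g(t)=\frac{1}{\rho t}\Big(2\cos\!\big(t+\tfrac{\delta t}{2}\big)\sin\!\big(\tfrac{\delta t}{2}\big)-\delta\sin t\Big)
\]
by choosing $t^*=m\pi/(1+\tfrac{\delta}{2})$ with the integer $m$ picked so that $\delta m$ lies within $|\delta|$ of $\pm1$: then $t^*+\tfrac{\delta t^*}{2}=m\pi$ makes $\cos(t^*+\tfrac{\delta t^*}{2})=\pm1$, simultaneously $\tfrac{\delta t^*}{2}=\pm\tfrac{\pi}{2}+O(\delta)$ makes $|\sin(\tfrac{\delta t^*}{2})|=1+O(\delta)$, and $t^*=\tfrac{\pi}{\delta}(1+O(\delta))$, so the modulus of the right-hand side at $t^*$ is $\tfrac{2|\delta|}{\pi}(1+O(\delta))\ge\tfrac{|\delta|}{\pi}$ once $\delta_0$ is small enough.

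Now I would discretise. Fix $k$ large. If $|\rho_k-1|>\delta_0$, take $\ell_k$ to be the nearest integer to $\pi/\epsilon_k$ when $\rho_k<1-\delta_0$ and to $\pi/\epsilon_{k+1}$ when $\rho_k>1+\delta_0$; since $g$ is Lipschitz with $g(\pi)=0$, while $g$ is strictly decreasing from $1$ to $0$ on $[0,\pi]$ so that $g(\rho_k\pi)$, respectively $g(\pi/\rho_k)$, stays above a fixed positive constant once $\rho_k\notin(1-\delta_0,1+\delta_0)$, the Lipschitz errors $O(\max(\epsilon_k,\epsilon_{k+1}))$ are negligible and $|g(\ell_k\epsilon_{k+1})-g(\ell_k\epsilon_k)|\ge c_1$ for an absolute $c_1>0$. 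If instead $|\rho_k-1|\le\delta_0$, let $\ell_k$ be the nearest integer to $t^*/\epsilon_k$, $t^*$ as above; here $t^*\asymp\pi/|\rho_k-1|$ is bounded below while $\epsilon_k\to0$, so on $[t^*-\tfrac{\epsilon_k}{2},t^*+\tfrac{\epsilon_k}{2}]$ both $t$ and $\rho_k t$ exceed $1$, giving $|\rho_k g'(\rho_k t)-g'(t)|\le\tfrac{4}{t}=O(|\rho_k-1|)$ there, whence $|g(\ell_k\epsilon_{k+1})-g(\ell_k\epsilon_k)|\ge\tfrac{|\rho_k-1|}{\pi}-O(|\rho_k-1|\epsilon_k)\ge\tfrac{|\rho_k-1|}{2\pi}$. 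Together these yield $\|D_{\epsilon_{k+1}}-D_{\epsilon_k}\|_2\ge c\min(1,|\rho_k-1|)$ for all $k\ge K$.

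Finally, $\sum_{k\ge K}\min(1,|\rho_k-1|)=\infty$: if $\rho_k\notin(\tfrac12,2)$ for infinitely many $k$ the sum diverges term by term; otherwise there is $K'\ge K$ with $\rho_k\in(\tfrac12,2)$ for all $k\ge K'$, so $\min(1,|\rho_k-1|)=|\rho_k-1|\ge\tfrac12|\log\rho_k|$, and telescoping,
\[
\sum_{k=K'}^{N}\min(1,|\rho_k-1|)\ \ge\ \tfrac12\Big|\sum_{k=K'}^{N}\log\rho_k\Big|\ =\ \tfrac12\big(\log\epsilon_{K'}-\log\epsilon_{N+1}\big)\ \xrightarrow[\ N\to\infty\ ]{}\ \infty
\]
since $\epsilon_{N+1}\to0$. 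Hence $\sum_k\|D_{\epsilon_{k+1}}-D_{\epsilon_k}\|_2=\infty$. I expect the delicate point to be the second step: aligning both phases in the trigonometric identity to extract the sharp rate $\gtrsim|\rho-1|$ as $\rho\to1$, and then verifying that restricting to $\epsilon_k\Z$ loses only a constant --- which works exactly because the near-optimal test point $t^*$ lives at the coarse scale $1/|\rho_k-1|$, over which $g(\rho_k t)-g(t)$ varies slowly relative to the shrinking mesh width $\epsilon_k$.
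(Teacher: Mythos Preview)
Your argument is correct and proceeds along a genuinely different line from the paper's. The paper first establishes the continuous lower bound (Lemma~2.8: $\sup_{t>0}|g(at)-g(bt)|\ge \frac{2}{\pi}(1-a/b)$ via the single test point $t=\pi/b$), and then passes from $\sup_{t\in\R}$ to $\sup_{\ell\in\Z}$ by invoking Bernstein's inequality for entire functions of exponential type (Lemma~2.10), which costs a factor $1/(1-\epsilon_k)$ once $\epsilon_k<1$. You instead bypass Bernstein entirely: you locate a near-optimal real test point $t^*$ and then argue directly that replacing $t^*$ by the nearest point of the lattice $\epsilon_k\Z$ only perturbs $g(\rho_k t)-g(t)$ by $O(|\rho_k-1|\epsilon_k)$, because $|h'(s)|\le 4/s\asymp |\rho_k-1|$ near $t^*$. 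This is more elementary and self-contained, at the price of the case split on $|\rho_k-1|\lessgtr\delta_0$ and a somewhat more delicate construction of $t^*$; the paper's Lemma~2.8 achieves the same continuous bound with less work. Your final divergence step (the log-telescoping on $\rho_k$) is essentially a repackaging of the paper's Lemma~2.9, with the bonus that it does not require the $\epsilon_k$ to be monotone, whereas the paper's Lemmas~2.8--2.10 are stated for $\epsilon_{k+1}<\epsilon_k$.
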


Theorem~\ref{2.6} is slightly harder to prove than Theorem~\ref{2.5} and will require the use of Bernstein's Inequality (See volume II, p. 276, Theorem 7.24 of Zygmund~\cite{Zygmund}).  Bernstein's theorem is a statement about integral functions of type at most $\sigma$. The class of such functions is denoted $E^{\sigma}$. A function $F$ will be said to be in $E^{\sigma}$ if if it is entire and for all $\epsilon>0$ satisfies

$$
F(z)=O\left(e^{(\sigma+\epsilon)|z|}\right).
$$

\begin{thm}\label{2.7} Suppose $F$ is in $E^{\sigma}$, then with 
$M=\sup _{x \in \mathbb{R}}|F(x)|$, for all $x$ in $\mathbb{R}$,

$$
\left|F^{\prime}(x)\right| \leq \sigma M.
$$

\end{thm}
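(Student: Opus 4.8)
The plan is to reduce to the point $x=0$, derive M.~Riesz's interpolation formula for $F'(0)$ by contour integration, and then read off Bernstein's inequality from the triangle inequality. Since $x_0\mapsto F(w+x_0)$ sends $E^\sigma$ to $E^\sigma$ and preserves $\sup_{\R}|F|=M$ (as $x_0$ is real), we may assume $x_0=0$ and must show $|F'(0)|\le\sigma M$. The one non-elementary input I would isolate first is the sharp half-plane growth bound: a member of $E^\sigma$ bounded by $M$ on $\R$ satisfies $|F(x+iy)|\le Me^{\sigma|y|}$ on all of $\C$. This is the classical Phragmén--Lindelöf estimate, obtained by applying the maximum principle to $F(z)e^{i\sigma z}$ in the upper half-plane and to $F(z)e^{-i\sigma z}$ in the lower one, the hypothesis $F(z)=O(e^{(\sigma+\varepsilon)|z|})$ being exactly what is needed to control these functions on the imaginary axis.

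With that bound available, for each positive integer $N$ I would take the square contour $C_N$ with vertical sides $\operatorname{Re}w=\pm N\pi/\sigma$ and horizontal sides $\operatorname{Im}w=\pm N\pi/\sigma$ and consider
$$
\frac{1}{2\pi i}\oint_{C_N}\frac{F(w)}{w^{2}\cos\sigma w}\,dw .
$$
The integrand has a double pole at $w=0$ with residue $\bigl(F(w)/\cos\sigma w\bigr)'\big|_{w=0}=F'(0)$ — the cross term drops out because $(\sec\sigma w)'$ vanishes at $0$ — and simple poles at the zeros $w_n=(n-\tfrac12)\pi/\sigma$ of $\cos\sigma w$, with residue $(-1)^{n}F(w_n)/(\sigma w_n^{2})$. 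On $C_N$ one has $|\cos\sigma w|\ge\tfrac12 e^{\sigma|\operatorname{Im}w|}$ — from $\cos(\pm N\pi+i\sigma y)=(-1)^{N}\cosh\sigma y$ on the vertical sides, and from $|\cos\sigma w|^{2}\ge\sinh^{2}(\sigma\operatorname{Im}w)$ on the horizontal ones — so the half-plane bound gives $|F(w)|/|\cos\sigma w|\le 2M$ uniformly on $C_N$. Combined with $|w|\ge N\pi/\sigma$ and total side length $8N\pi/\sigma$, the integral is $O(1/N)$ and hence tends to $0$.

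Therefore the sum of all residues vanishes in the limit; since $\sum_{n\in\Z}(n-\tfrac12)^{-2}<\infty$ the residue series converges absolutely, so passing to the limit yields M.~Riesz's formula
$$
F'(0)=\frac{\sigma}{\pi^{2}}\sum_{n\in\Z}\frac{(-1)^{n+1}}{(n-\tfrac12)^{2}}\,F\!\left(\frac{(n-\tfrac12)\pi}{\sigma}\right),
$$
and taking absolute values with $|F(w_n)|\le M$ and the identity $\sum_{n\in\Z}(n-\tfrac12)^{-2}=\pi^{2}$ gives $|F'(0)|\le\sigma M$. I expect the main obstacle to be the sharp exponent in $|F(x+iy)|\le Me^{\sigma|y|}$ and, inseparably, the decay of the contour integral it underwrites: it is precisely the absence of any $\varepsilon$-loss in the exponent that lets the bound on $|F|$ beat the growth of $1/\cos\sigma w$ near the corners of $C_N$, whereas the raw estimate $F(z)=O(e^{(\sigma+\varepsilon)|z|})$ would leave a factor $e^{\varepsilon N\pi/\sigma}$ that wrecks the argument. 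Everything past that point is a routine residue computation together with the evaluation of $\sum(n-\tfrac12)^{-2}$.
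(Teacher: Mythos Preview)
Your proof is correct. The paper itself does not prove Theorem~2.7 at all: it is quoted as Bernstein's inequality with a reference to Zygmund (vol.~II, p.~276, Theorem~7.24), and is then used as a black box in Lemma~2.10. What you have written is precisely the classical M.~Riesz interpolation-formula proof that appears in Zygmund, so in effect you have supplied the argument behind the citation rather than an alternative to anything the authors do. The only point worth flagging is that the Phragm\'en--Lindel\"of step, which you correctly identify as the crux, is itself a named theorem that one would normally cite; once $|F(x+iy)|\le Me^{\sigma|y|}$ is granted, your contour estimate and residue bookkeeping are clean, and the evaluation $\sum_{n\in\mathbb Z}(n-\tfrac12)^{-2}=\pi^2$ closes the argument with the sharp constant.
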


For example, if $h$ is in $L^{2}(-\sigma, \sigma)$, then $\hat{h}$ is in $E^{\sigma}$. This is easy to see by looking at the integral representing $\hat{h}$ :

$$
\hat{h}(z)=\int_{-\sigma}^{\sigma} h(\xi) e^{i \xi z} d \xi
$$

thus

$$
|\hat{h}(z)| \leq e^{\sigma|z|} \int_{-\sigma}^{\sigma}|h(\xi)| d \xi
$$

The following lemmas will be useful in the proof of Theorem~\ref{2.5}.

\begin{lem}\label{2.8} If $0<a<b$, then

$$
\frac{2}{\pi}(1-a / b) \leq \sup _{t>0}\left|\frac{\sin (a t)}{a t}-\frac{\sin (b t)}{b t}\right|
$$
\end{lem}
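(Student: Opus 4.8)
The plan is to bound the supremum from below by evaluating the difference at one cleverly chosen value of $t$. Set $g(x) = \frac{\sin x}{x}$ for $x>0$, so that the quantity in question is $\sup_{t>0}\bigl|g(at) - g(bt)\bigr|$. Since $g(\pi) = 0$, choosing $t = \pi/b$ eliminates the $b$-term: $g(bt) = g(\pi) = 0$, while $g(at) = g\!\left(\pi a/b\right) = \dfrac{\sin(\pi a/b)}{\pi a/b}$. Because $0 < a/b < 1$, the argument $\pi a/b$ lies in $(0,\pi)$, so $\sin(\pi a/b) > 0$, and therefore
$$\sup_{t>0}\left|\frac{\sin(at)}{at} - \frac{\sin(bt)}{bt}\right| \;\ge\; \frac{\sin(\pi a/b)}{\pi a/b}.$$

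It then suffices to verify the scalar inequality $\dfrac{\sin(\pi r)}{\pi r} \ge \dfrac{2}{\pi}(1-r)$ for every $r \in (0,1)$, which after multiplying through by $\pi r > 0$ reads $\sin(\pi r) \ge 2r(1-r)$. I would prove this using concavity of $\sin$ together with the symmetry $r \mapsto 1-r$, under which both sides are invariant. For $r \in (0,\tfrac12]$ we have $\pi r \in (0,\tfrac\pi2]$; writing $\pi r = (1-2r)\cdot 0 + 2r\cdot\tfrac\pi2$ and using that $\sin$ is concave on $[0,\tfrac\pi2]$ gives $\sin(\pi r) \ge 2r \ge 2r(1-r)$. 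Replacing $r$ by $1-r$ handles $r \in [\tfrac12,1)$. (Alternatively one may quote Jordan's inequality $\sin\theta \ge \tfrac{2\theta}{\pi}$ on $[0,\tfrac\pi2]$ directly.)

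Combining the two steps, the right-hand side of the displayed bound is at least $\dfrac{2r(1-r)}{\pi r} = \dfrac{2(1-r)}{\pi} = \dfrac{2}{\pi}\bigl(1 - a/b\bigr)$, which is exactly the assertion of the lemma. There is no substantial obstacle here: the real content is the choice $t = \pi/b$, which exploits the zero of $g$ at $\pi$, and the only calculation needing attention is the elementary estimate $\sin(\pi r) \ge 2r(1-r)$, dispatched by the concavity-and-symmetry argument above.
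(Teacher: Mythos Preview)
Your proof is correct and follows essentially the same approach as the paper: both evaluate at $t=\pi/b$ to kill the $b$-term and then split at $r=a/b=1/2$, using Jordan's inequality $\sin\theta\ge 2\theta/\pi$ on $(0,\pi/2]$. The only cosmetic difference is that the paper handles the two halves with two separate inequalities ($\sin x>2x/\pi$ on $(0,\pi/2)$ and $\sin x>2(1-x/\pi)$ on $(\pi/2,\pi)$), whereas you observe that these are exchanged by the symmetry $r\mapsto 1-r$ and thereby reduce to a single case.
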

\begin{proof}

$$
\begin{aligned}
\sup _{t>0}\left|\frac{\sin (a t)}{a t}-\frac{\sin (b t)}{b t}\right| & \geq\left|\frac{\sin \left(\frac{a \pi}{b}\right)}{\frac{a \pi}{b}}-\frac{\sin \left(\frac{b \pi}{b}\right)}{\frac{b \pi}{b}}\right| \\
& =\frac{\sin \left(\frac{a \pi}{b}\right)}{\frac{a \pi}{b}}
\end{aligned}
$$

Case I: When $b / 2<a<b$ then $\pi / 2<a \pi / b<\pi$. However, we know that $\sin x>2[1-x / \pi]$ when $\pi / 2<x<\pi$, thus

$$
\frac{\sin \left(\frac{a \pi}{b}\right)}{\frac{a \pi}{b}}>\frac{2[1-a / b]}{\frac{a \pi}{b}}>\frac{2}{\pi}(1-a / b)
$$

Case II: When $0<a<b / 2$ then $0<a \pi / b<\pi / 2$. Also, we know that $\sin x>2 x / \pi$ when $0<x<\pi / 2$ Thus,

$$
\frac{\sin \left(\frac{a \pi}{b}\right)}{\frac{a \pi}{b}}>\frac{2 a / b}{a \pi / b}=\frac{2}{\pi}>\frac{2}{\pi}(1-a / b)
$$
\end{proof}

\begin{lem}\label{2.9}  If $\left(\epsilon_{k}\right)$ is a sequence of positive real numbers converging to zero and satisfying $\epsilon_{k} \geq \epsilon_{k+1}$ then

$$
\sum\limits_{k=1}^\infty \left(1-\frac{\epsilon_{k+1}}{\epsilon_{k}}\right)=\infty
$$
\end{lem}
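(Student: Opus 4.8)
The plan is to set $a_k = 1 - \epsilon_{k+1}/\epsilon_k$ and exploit the fact that the partial products of the ratios telescope: $\prod_{k=1}^{N}(1-a_k) = \prod_{k=1}^{N}\frac{\epsilon_{k+1}}{\epsilon_k} = \frac{\epsilon_{N+1}}{\epsilon_1}$, and this tends to $0$ since $\epsilon_k \to 0$. So the lemma reduces to the standard fact that a product $\prod(1-a_k)$ with $a_k \in [0,1)$ can tend to $0$ only if $\sum a_k = \infty$. I would argue this by contradiction, assuming $\sum_{k=1}^\infty a_k < \infty$. Note first that the hypothesis $\epsilon_k \geq \epsilon_{k+1} > 0$ guarantees $a_k \in [0,1)$ for every $k$, so all the products in sight are positive.

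The one auxiliary inequality I would record is the elementary Weierstrass-type bound $\prod_{k=m}^{N}(1-a_k) \geq 1 - \sum_{k=m}^{N} a_k$, valid whenever $a_k \in [0,1]$; it follows by a one-line induction on $N$, the inductive step using only that $a_{N+1}\sum_{k=m}^{N}a_k \geq 0$. Now, assuming $\sum_{k=1}^\infty a_k < \infty$, choose $K$ so large that $\sum_{k=K+1}^\infty a_k < 1$. Then for every $N > K$,
\[
\frac{\epsilon_{N+1}}{\epsilon_1} = \prod_{k=1}^{N}(1-a_k) = \Big(\prod_{k=1}^{K}(1-a_k)\Big)\Big(\prod_{k=K+1}^{N}(1-a_k)\Big) \geq \Big(\prod_{k=1}^{K}(1-a_k)\Big)\Big(1 - \sum_{k=K+1}^{\infty} a_k\Big) =: c > 0,
\]
since the first factor is a finite product of strictly positive numbers and the second factor is strictly positive by the choice of $K$. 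Hence $\epsilon_{N+1} \geq c\,\epsilon_1 > 0$ for all $N$, contradicting $\epsilon_{N+1} \to 0$. Therefore $\sum_{k=1}^\infty a_k = \infty$, which is exactly the assertion of the lemma.

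There is no genuine obstacle here: the only ingredients are the telescoping of the product of ratios and the elementary product inequality above. One could alternatively use $\log(1-x) \geq -2x$ for $x \in [0,\tfrac12]$ together with the divergence of $\sum_k(\log\epsilon_k - \log\epsilon_{k+1}) = \log\epsilon_K - \log\epsilon_{N+1}$, but that route first requires observing $a_k \to 0$ in order to get into the range where the logarithmic estimate applies, whereas the product estimate needs no such preliminary step.
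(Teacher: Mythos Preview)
Your proof is correct and follows essentially the same approach as the paper: both set $u_k = 1 - \epsilon_{k+1}/\epsilon_k$, observe the telescoping $\prod_{k=1}^{N}(1-u_k) = \epsilon_{N+1}/\epsilon_1 \to 0$, and invoke the standard equivalence $\prod(1-u_k) > 0 \iff \sum u_k < \infty$ for $u_k \in [0,1)$. The only difference is that the paper simply quotes this infinite-product fact, whereas you supply a self-contained proof of the needed direction via the Weierstrass bound $\prod(1-a_k) \geq 1 - \sum a_k$.
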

\begin{proof}  We recall that whenever $0 \leq u_{n}<1$,

$$
\prod_{k=1}^{\infty}\left(1-u_{n}\right)>0 \Longleftrightarrow \sum\limits_{k=1}^{\infty} u_{n}<\infty
$$

Thus upon letting $u_{k}=\left(1-\frac{\epsilon_{k+1}}{\epsilon_{k}}\right)$ it can be seen that

$$
\prod_{k=1}^{\infty}\left(1-u_{n}\right)=\prod_{k=1}^{\infty} \frac{\epsilon_{k+1}}{\epsilon_{k}}=0
$$

hence the lemma is proved.
\end{proof}

Theorem~\ref{2.5} can now be proved
\begin{proof}[Proof of Theorem~\ref{2.5}]
It can be shown that $\left\|D_{\epsilon_{k+1}}-D_{\epsilon_{k}}\right\|_{2}=\sup _{t}\left|\frac{\sin t \epsilon_{k+1}}{t \epsilon_{k+1}}-\frac{\sin t \epsilon_{k}}{t \epsilon_{k}}\right|$ from which it follows that

$$
\sum\limits_{k=1}^{\infty}\left\|D_{\epsilon_{k+1}}-D_{\epsilon_{k}}\right\|_{2}=\sum\limits_{k=1}^{\infty} \sup _{t}\left|\frac{\sin t \epsilon_{k+1}}{t \epsilon_{k+1}}-\frac{\sin t \epsilon_{k}}{t \epsilon_{k}}\right| \geq \frac{2}{\pi} \sum\limits_{k=1}^{\infty}\left(1-\frac{\epsilon_{k+1}}{\epsilon_{k}}\right)=\infty
$$

The last inequality follows from Lemma~\ref{2.8}, and the final sum diverges because of Lemma~\ref{2.9}.
\end{proof}

The following additional lemma will be needed in proving Theorem~\ref{2.6}:

\begin{lem}\label{2.10}  If $0<a<b<1$, then

$$
\sup _{t>0}\left|\frac{\sin (a t)}{a t}-\frac{\sin (b t)}{b t}\right| \leq \frac{1}{1-b} \sup _{\ell \in \mathbb{Z}}\left|\frac{\sin (a \ell)}{a \ell}-\frac{\sin (b \ell)}{b \ell}\right|
$$
\end{lem}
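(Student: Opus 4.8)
The plan is to recognize the difference
$$ g(z) \;:=\; \frac{\sin (az)}{az}-\frac{\sin (bz)}{bz} $$
as an entire function of exponential type at most $b$, and then to deduce the stated sampling inequality from Bernstein's Inequality (Theorem~\ref{2.7}), invoking the hypothesis $b<1$ only at the very last step.

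First I would check that $g$ extends from $\R$ to a member of $E^{b}$ that is bounded on $\R$. Both $z\mapsto\sin(az)/(az)$ and $z\mapsto\sin(bz)/(bz)$ are entire, the singularity at the origin being removable, and the elementary bound $|\sin w|\le e^{|\operatorname{Im} w|}\le e^{|w|}$ shows that these two functions are $O(e^{a|z|})$ and $O(e^{b|z|})$ respectively; since $a<b$, this makes $g\in E^{b}$. On the real axis $|g(x)|\le 2$, so $M:=\sup_{x\in\R}|g(x)|$ is finite, and since $g$ is even with $g(0)=0$ we have $\sup_{t>0}|g(t)|=\sup_{x\in\R}|g(x)|=M$. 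These are all one-line verifications.

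Next I would apply Theorem~\ref{2.7} to obtain $|g'(x)|\le bM$ for all $x\in\R$, and then pass from the integers back to $\R$ by a single mean value step: given $t>0$, take $\ell=\lfloor t\rfloor\in\Z$, so $|t-\ell|<1$, and estimate $|g(t)|\le|g(\ell)|+|t-\ell|\,bM\le N+bM$, where $N:=\sup_{\ell\in\Z}|g(\ell)|$. Taking the supremum over $t>0$ gives $M\le N+bM$, hence $(1-b)M\le N$, and dividing by $1-b>0$ yields $M\le N/(1-b)$, which is precisely the assertion.

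I do not expect a genuine obstacle here. The only points that deserve a sentence of care are confirming that $g$ really lies in $E^{b}$, so that Bernstein's Inequality applies — note that Theorem~\ref{2.7} also tacitly requires the finiteness of $M$, which holds since $|g|\le 2$ on $\R$ — and the harmless replacement of $\sup_{t>0}$ by $\sup_{\R}$ on the left-hand side. One could sharpen the constant to $2/(2-b)$ by instead choosing $\ell$ to be the integer nearest to $t$, but the weaker bound $1/(1-b)$ is cleaner and is all that the proof of Theorem~\ref{2.6} will need.
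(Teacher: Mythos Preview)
Your argument is correct and is essentially identical to the paper's: define $g(z)=\frac{\sin(az)}{az}-\frac{\sin(bz)}{bz}\in E^{b}$, apply Bernstein's inequality to bound $|g'|\le bM$, and use a one-step mean value estimate at the nearest integer to get $M\le N+bM$, hence $M\le N/(1-b)$. You are slightly more explicit than the paper in verifying $g\in E^b$ and the finiteness of $M$, but there is no substantive difference.
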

\begin{proof}

Since $0<a<b$, it follows that $g(z)=\frac{\sin (a z)}{a z}-\frac{\sin (b z)}{b z}$ is in $E^{\sigma}$ with $\sigma=b$. Hence, by Bernstein's inequality Theorem~\ref{2.7}, we note that for every $t \in \mathbb{R}$ and every $\ell \in \mathbb{Z}$,

$$
\begin{aligned}
|g(t)| & \leq|g(t)-g(\ell)|+|g(\ell)| \leq \sup _{\theta}\left|g^{\prime}(\theta)\right|+|g(\ell)| \\
& \leq b \sup _{\{x \in \mathbb{R}\}}|g(x)|+|g(\ell)|
\end{aligned}
$$

Thus

\begin{equation*}
\sup _{\{x \in \mathbb{R}\}}|g(x)| \leq \frac{1}{1-b} \sup _{\{\ell \in \mathbb{Z}\}}|g(\ell)| \tag{2.1}
\end{equation*}

which is the desired result.
\end{proof}

Other sampling results similar to Equation~\ref{2.1} can also be found in Section 4.3.3 on page 186 of Timan~\cite{Timan} or in Cartwright~\cite{Cartwright}.

Theorem~\ref{2.6} can now be proved.
\begin{proof}[Proof of Theorem~\ref{2.6}]

This time,

$$
\left\|D_{\epsilon_{k+1}}-D_{\epsilon_{k}}\right\|_{2}=\sup _{\{\ell \in \mathbb{Z}\}}\left|\frac{\sin \ell \epsilon_{k+1}}{\ell \epsilon_{k+1}}-\frac{\sin \ell \epsilon_{k}}{\ell \epsilon_{k}}\right|
$$

Hence by Lemma~\ref{2.10} and Lemma~\ref{2.8},

$$
\begin{aligned}
\sum\limits_{k=1}^{\infty}\left\|D_{\epsilon_{k+1}}-D_{\epsilon_{k}}\right\|_{2} & =\sum\limits_{k=1}^{\infty} \sup _{\{\ell \in \mathbb{Z}\}}\left|\frac{\sin \ell \epsilon_{k+1}}{\ell \epsilon_{k+1}}-\frac{\sin \ell \epsilon_{k}}{\ell \epsilon_{k}}\right| \\
& \geq \sum\limits_{\left\{k: \epsilon_{k}<1\right\}}\left(1-\epsilon_{k}\right) \sup _{\{t \in \mathbb{R}\}}\left|\frac{\sin t \epsilon_{k+1}}{t \epsilon_{k+1}}-\frac{\sin t \epsilon_{k}}{t \epsilon_{k}}\right| \\
& \geq \frac{2}{\pi} \sum\limits_{\left\{k: \epsilon_{k}<1\right\}}\left(1-\epsilon_{k}\right)\left(1-\frac{\epsilon_{k+1}}{\epsilon_{k}}\right) \\
& =\frac{2}{\pi} \sum\limits_{\left\{k: \epsilon_{k}<1\right\}} 1-\frac{\epsilon_{k+1}}{\epsilon_{k}}+\epsilon_{k+1}-\epsilon_{k} \\
& =-\epsilon_{n_{1}}+\frac{2}{\pi} \sum\limits_{\left\{k: \epsilon_{k}<1\right\}}\left(1-\frac{\epsilon_{k+1}}{\epsilon_{k}}\right) .
\end{aligned}
$$

In the above, we denote the first $\epsilon_{k}$ which is less than one by $\epsilon_{n_{1}}$. Also, since $\lim \epsilon_{k}=0$, it follows from Lemma 2.9 that the last series is infinite.
\end{proof}

The ergodic version of Theorem~\ref{2.5} can be easily proved using a discrete version of Lemma~\ref{2.9} together with an averaging version of Lemma~\ref{2.8}. The alternate versions of Lemma~\ref{2.9} and Lemma~\ref{2.8} have nearly identical proofs to the original so they will merely be stated here for convenience without proof.  The alternate version of Theorem~\ref{2.5} will be both stated and proved.

\begin{lem}\label{2.11} If $1 \leq n_{1} \leq \ldots n_{k} \leq n_{k+1} \leq \ldots$ and $\lim _{k \rightarrow \infty} n_{k}=\infty$, then

$$
\sum\limits_{k=1}^{\infty}\left(1-\frac{n_{k}}{n_{k+1}}\right)=\infty
$$
\end{lem}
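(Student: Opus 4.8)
The plan is to follow the proof of Lemma~\ref{2.9} almost verbatim, with the telescoping product $\prod \epsilon_{k+1}/\epsilon_k$ replaced by $\prod n_k/n_{k+1}$. First I would set $u_k = 1 - \dfrac{n_k}{n_{k+1}}$ and verify that $0 \le u_k < 1$ for every $k$: the inequality $u_k \ge 0$ is exactly the monotonicity hypothesis $n_k \le n_{k+1}$, while $u_k < 1$ holds because $n_k \ge 1$ forces $n_k/n_{k+1} > 0$. (Indices with $n_k = n_{k+1}$ simply give $u_k = 0$ and are harmless.) This puts us in position to invoke the classical criterion
$$
\prod_{k=1}^{\infty}(1 - u_k) > 0 \Longleftrightarrow \sum_{k=1}^{\infty} u_k < \infty,
$$
valid precisely when $0 \le u_k < 1$, which is the same fact already used in the proof of Lemma~\ref{2.9}.

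Next I would evaluate the partial products by telescoping,
$$
\prod_{k=1}^{N}(1 - u_k) = \prod_{k=1}^{N} \frac{n_k}{n_{k+1}} = \frac{n_1}{n_{N+1}},
$$
and observe that since $n_1 \ge 1$ is fixed and $n_{N+1} \to \infty$ by hypothesis, the right-hand side tends to $0$. Hence $\prod_{k=1}^{\infty}(1 - u_k) = 0$, and by the equivalence above $\sum_{k=1}^{\infty}\bigl(1 - n_k/n_{k+1}\bigr) = \sum_{k=1}^{\infty} u_k = \infty$, which is the assertion.

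I do not expect any genuine obstacle here. The only step that asks for a moment's care is confirming $u_k < 1$, so that the infinite-product criterion is legitimately applicable; this is immediate from $n_k \ge 1$. The divergence then follows directly from the telescoping identity together with $n_k \to \infty$, just as in Lemma~\ref{2.9}.
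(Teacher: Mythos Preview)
Your proposal is correct and follows exactly the approach the paper intends: the paper states that Lemma~\ref{2.11} has a ``nearly identical proof'' to Lemma~\ref{2.9} and omits it, and your argument is precisely that adaptation, setting $u_k = 1 - n_k/n_{k+1}$ and telescoping the product to $n_1/n_{N+1}\to 0$.
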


\begin{lem}\label{2.12} Suppose $n$ and $m$ are positive integers satifying $n<m$. It follows that

$$
\frac{2}{\pi}\left(1-\frac{n}{m}\right) \leq \sup _{\gamma}\left|\frac{1}{n} \sum\limits_{k=1}^{n} \gamma^{k}-\frac{1}{m} \sum\limits_{k=1}^{m} \gamma^{k}\right|
$$
\end{lem}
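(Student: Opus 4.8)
The plan is to imitate the proof of Lemma~\ref{2.8}: evaluate the quantity inside the supremum at one cleverly chosen $\gamma\in\mathbb{T}$ that annihilates the longer average, and then bound the shorter average from below. In Lemma~\ref{2.8} the trick was to plug in $t=\pi/b$, which makes $\sin(bt)=\sin\pi=0$; here the analogue is to plug in a primitive $m$-th root of unity, which makes $\frac1m\sum_{k=1}^m\gamma^k=0$.

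Concretely, I would set $\gamma_0=e^{2\pi i/m}$. Since $\gamma_0^m=1$ and $\gamma_0\ne 1$, summing the geometric series gives $\frac1m\sum_{k=1}^m\gamma_0^k=\frac1m\,\gamma_0\,\frac{\gamma_0^m-1}{\gamma_0-1}=0$, so
$$\sup_\gamma\left|\frac1n\sum_{k=1}^n\gamma^k-\frac1m\sum_{k=1}^m\gamma^k\right|\ \ge\ \left|\frac1n\sum_{k=1}^n\gamma_0^k\right|\ =\ \frac1n\,\frac{|\gamma_0^n-1|}{|\gamma_0-1|}.$$
Using $|e^{i\alpha}-1|=2|\sin(\alpha/2)|$ with $\alpha=2\pi n/m$ and $\alpha=2\pi/m$, and noting that $0<n/m<1$ forces $\sin(\pi n/m)>0$ (and $m\ge 2$ so $\sin(\pi/m)>0$), this last expression equals $\dfrac{\sin(\pi n/m)}{n\,\sin(\pi/m)}$.

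Next I would pass to the continuous estimate. Put $x=\pi n/m\in(0,\pi)$, so $n=mx/\pi$, and then
$$\frac{\sin(\pi n/m)}{n\,\sin(\pi/m)}\ =\ \frac{\sin x}{x}\cdot\frac{\pi/m}{\sin(\pi/m)}\ \ge\ \frac{\sin x}{x},$$
since $\sin y\le y$ for $y>0$. Finally, the elementary inequality $\frac{\sin x}{x}\ge\frac{2}{\pi}\bigl(1-\tfrac{x}{\pi}\bigr)$ on $(0,\pi)$ — which is precisely the two-case argument in the proof of Lemma~\ref{2.8}: split at $x=\pi/2$ and use the chord bound $\sin x>2(1-x/\pi)$ on $(\pi/2,\pi)$ together with Jordan's inequality $\sin x>2x/\pi$ on $(0,\pi/2)$ — gives $\frac{\sin x}{x}\ge\frac{2}{\pi}(1-n/m)$ because $x/\pi=n/m$. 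Chaining the inequalities proves the lemma.

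The only thing to watch is the bookkeeping in the substitution and the sign of $\sin(\pi n/m)$; there is no genuine obstacle, since the stray factor $\frac{\pi/m}{\sin(\pi/m)}$ is $\ge 1$ and so works in our favor. This is essentially the discrete counterpart of Lemma~\ref{2.8}, with the $m$-th root of unity playing the role that $t=\pi/b$ played there.
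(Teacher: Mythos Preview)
Your proof is correct and follows precisely the approach the paper intends: the paper states that Lemma~\ref{2.12} has a ``nearly identical proof'' to Lemma~\ref{2.8} and omits the details, and your choice of $\gamma_0=e^{2\pi i/m}$ is exactly the discrete analogue of taking $t=\pi/b$ to kill the longer average, after which the same two-case bound on $\sin x/x$ finishes the job. The extra factor $\dfrac{\pi/m}{\sin(\pi/m)}\ge 1$ you pick up is handled correctly and indeed only helps.
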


\begin{thm}\label{2.13} Suppose $\left(n_{k}\right)$ is any sequence of positive integers increasing to infinity. Suppose that $X$ is some probability space and that $\tau$ is an ergodic measure preserving transformation acting on $X$ and that $M_{n}$ is the usual ergodic averaging operator acting on $L^2(X)$ sending $f$ to $\frac{1}{n} \sum\limits_{k=1}^{n} f\left(\tau^{k} x\right)$, then

$$
\sum\limits_{k=1}^{\infty}\left\|M_{n_{k+1}}-M_{n_{k}}\right\|_{2}=\infty
$$
\end{thm}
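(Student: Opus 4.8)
The plan is to transplant the circle estimate of Lemma~\ref{2.12} into the abstract system $(X,\tau)$ by means of Rokhlin's lemma, and then to sum the resulting lower bounds with Lemma~\ref{2.11}. (Rokhlin's lemma requires $\tau$ to be aperiodic, i.e.\ $X$ non-atomic, and this hypothesis really is needed: if $(X,\tau)$ is a cyclic system of period $N$ then the Koopman operator has finite spectrum and an elementary computation gives $\sum_k\|M_{n_{k+1}}-M_{n_k}\|_2<\infty$ whenever, say, $(n_k)$ is lacunary. We therefore take $X$ non-atomic, as is implicit throughout.) Since $\|M_{n_{k+1}}-M_{n_k}\|_2\ge \|(M_{n_{k+1}}-M_{n_k})f\|_2/\|f\|_2$ for any nonzero $f$, it suffices to produce, for each $k$, a test function $f_k$ with
\[
\|(M_{n_{k+1}}-M_{n_k})f_k\|_2\ \ge\ c\Bigl(1-\tfrac{n_k}{n_{k+1}}\Bigr)\|f_k\|_2
\]
for an absolute constant $c>0$ (this is automatic when $n_k=n_{k+1}$, both sides being $0$); summing then gives $\sum_k\|M_{n_{k+1}}-M_{n_k}\|_2\ge c\sum_k(1-n_k/n_{k+1})=\infty$ by Lemma~\ref{2.11}.

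To construct $f_k$, first use Lemma~\ref{2.12} to fix $\gamma_k=e^{i\theta_k}\in\mathbb{T}$ with $|m_{n_{k+1}}(\gamma_k)-m_{n_k}(\gamma_k)|\ge \tfrac2\pi(1-n_k/n_{k+1})$. Apply Rokhlin's lemma with tower height $N_k=4n_{k+1}$ and error $\tfrac14$ to obtain a base set $B_k$ with $B_k,\tau B_k,\dots,\tau^{N_k-1}B_k$ pairwise disjoint and $\mu\bigl(\bigcup_{j=0}^{N_k-1}\tau^jB_k\bigr)\ge \tfrac34$, and set $f_k=\sum_{j=0}^{N_k-1}\gamma_k^{\,j}\,\1_{\tau^jB_k}$. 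The point is that for $x\in B_k$ and $0\le j\le N_k-1-n_{k+1}$ the iterates $\tau^{j+1}x,\dots,\tau^{j+n_{k+1}}x$ all lie on the tower above $x$, so $M_nf_k(\tau^jx)=\gamma_k^{\,j}m_n(\gamma_k)$ for $n\in\{n_k,n_{k+1}\}$, and hence
\[
(M_{n_{k+1}}-M_{n_k})f_k(\tau^jx)=\gamma_k^{\,j}\bigl(m_{n_{k+1}}(\gamma_k)-m_{n_k}(\gamma_k)\bigr),
\]
of constant modulus $|m_{n_{k+1}}(\gamma_k)-m_{n_k}(\gamma_k)|$ there. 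This good set consists of $3n_{k+1}$ of the levels, each of measure $\mu(B_k)\ge \tfrac{3}{16n_{k+1}}$, so it has measure at least $\tfrac9{16}$, while $\|f_k\|_2^2=\mu\bigl(\bigcup_j\tau^jB_k\bigr)\le 1$. Therefore $\|(M_{n_{k+1}}-M_{n_k})f_k\|_2^2\ge \tfrac9{16}|m_{n_{k+1}}(\gamma_k)-m_{n_k}(\gamma_k)|^2$, whence $\|M_{n_{k+1}}-M_{n_k}\|_2\ge \tfrac34|m_{n_{k+1}}(\gamma_k)-m_{n_k}(\gamma_k)|\ge \tfrac{3}{2\pi}(1-n_k/n_{k+1})$, and Lemma~\ref{2.11} finishes the proof.

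The only step I expect to need genuine care is the truncation at the top of the Rokhlin tower: one must verify that $M_nf_k(\tau^jx)=\gamma_k^{\,j}m_n(\gamma_k)$ holds on all but the top $n_{k+1}$ levels no matter how $\tau$ folds those top levels back into $X$, and then bookkeep the measures so the good set is a fixed proportion of $X$; everything else is formula-pushing. A more conceptual but less self-contained route is to first record that for aperiodic $\tau$ the spectrum of $U_\tau$ on $L^2(X)$ is all of $\mathbb{T}$ — which is exactly what the Rokhlin construction above establishes, since it exhibits every $\gamma\in\mathbb{T}$ as a limit of spectral points — and then observe $\|M_{n_{k+1}}-M_{n_k}\|_2=\sup_{\gamma\in\mathbb{T}}|m_{n_{k+1}}(\gamma)-m_{n_k}(\gamma)|$, so that Lemma~\ref{2.12} applies verbatim and even yields the sharper constant $\tfrac2\pi$, in direct parallel with the proof of Theorem~\ref{2.5}.
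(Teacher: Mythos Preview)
Your proof is correct. Both your argument and the paper's reduce the problem to Lemma~\ref{2.12} followed by Lemma~\ref{2.11}; the difference lies in how the circle estimate is transplanted into $(X,\tau)$. The paper invokes the Conze principle to assert directly that $\|M_{n_{k+1}}^\tau-M_{n_k}^\tau\|_2=\sup_{\sigma\ \text{m.p.}}\|M_{n_{k+1}}^\sigma-M_{n_k}^\sigma\|_2$, and then evaluates the right-hand side on rotations of $\mathbb{T}$ to pick up $\sup_\gamma|m_{n_{k+1}}(\gamma)-m_{n_k}(\gamma)|$. You instead unpack that black box, building an explicit Rokhlin-tower test function $f_k$ that mimics the character $\gamma_k^j$ on most of $X$. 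Your route is more self-contained (and makes the non-atomicity hypothesis visible), at the cost of a constant $\tfrac{3}{2\pi}$ rather than $\tfrac{2}{\pi}$; the paper's route is shorter if one already has Conze available. Your closing remark that the spectrum of $U_\tau$ is all of $\mathbb{T}$, giving the operator norm as $\sup_\gamma|m_{n_{k+1}}(\gamma)-m_{n_k}(\gamma)|$ exactly, is precisely what the Conze principle delivers here and recovers the paper's constant.
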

\begin{proof}
 In what follows, ''m.p'' will stand for the phrase ''measure preserving''. Using the Conze Principle yields the fact that

$$
\begin{aligned}
& \sum\limits_{k=1}^{\infty}\left\|M_{n_{k+1}}^{\tau}-M_{n_{k}}^{\tau}\right\|_{2}=\sum\limits_{k=1}^{\infty} \sup _{\sigma \text { m.p. }}\left\|M_{n_{k+1}}^{\sigma}-M_{n_{k}}^{\sigma}\right\|_{2}
\end{aligned}
$$

$$
\begin{aligned}
& \ge\sum\limits_{k=1}^{\infty} \sup _{\sigma \text { m.p. } \sup _{\|\|_{2}=1}}\left(\int_{\mathbb{T}}\left|M_{n_{k+1}}^{\sigma} f(x)-M_{n_{k}}^{\sigma} f(x)\right|^{2} d x\right)^{1 / 2}
\end{aligned}
$$

Now for each $k$, let $\gamma_{k}$ be the point on $\mathbb{T}$ where

$$
\left|\frac{1}{n_{k+1}} \sum\limits_{j=1}^{n_{k+1}} \gamma_{k}^{j}-\frac{1}{n_{k}} \sum\limits_{j=1}^{n_{k}} \gamma_{k}^{j}\right|=\sup _{\gamma \in \mathbb{T}}\left|\frac{1}{n_{k+1}} \sum\limits_{j=1}^{n_{k+1}} \gamma^{j}-\frac{1}{n_{k}} \sum\limits_{j=1}^{n_{k}} \gamma^{j}\right|
$$

Let $\sigma_{\gamma_{k}}: \alpha \rightarrow \gamma_{k} \alpha$ and let $f(\alpha)=\alpha$. Then

$$
\begin{aligned}
\sup _{\sigma \text { m.p. } \sup _{\|\|_{2}=1}} \int_{\mathbb{T}}\left|M_{n_{k+1}}^{\sigma} f(x)-M_{n_{k}}^{\sigma} f(x)\right|^{2} d x & \geq \int_{\mathbb{T}}\left|\frac{1}{n_{k+1}} \sum\limits_{j=1}^{n_{k+1}}{\gamma_{k}}^{j} \alpha-\frac{1}{n_{k}} \sum\limits_{j=1}^{n_{k}} \gamma_{k}{ }^{j} \alpha\right|^{2} d \alpha \\
& =\left|\frac{1}{n_{k+1}} \sum\limits_{j=1}^{n_{k+1}} \gamma_{k}{ }^{j}-\frac{1}{n_{k}} \sum\limits_{j=1}^{n_{k}}{\gamma_{k}}^{j}\right|^{2}
\end{aligned}
$$

Thus

$$
\begin{aligned}
& \sum\limits_{k=1}^{\infty} \sup _{\sigma \text { m.p. }\|f\|_{2}=1} \sup _{\mathbb{T}}\left(\int_{\mathbb{1}}\left|M_{n_{k+1}}^{\sigma} f(x)-M_{n_{k}}^{\sigma} f(x)\right|^{2} d x\right)^{1 / 2} \geq \sum\limits_{k=1}^{\infty}\left|\frac{1}{n_{k+1}} \sum\limits_{j=1}^{n_{k+1}}{\gamma_{k}}^{j}-\frac{1}{n_{k}} \sum\limits_{j=1}^{n_{k}}{\gamma_{k}}^{j}\right| \\
& =\sum\limits_{k=1}^{\infty} \sup _{\gamma \in \mathbb{T}}\left|\frac{1}{n_{k+1}} \sum\limits_{j=1}^{n_{k+1}} \gamma^{j}-\frac{1}{n_{k}} \sum\limits_{j=1}^{n_{k}} \gamma^{j}\right| \\
& \geq\left(\frac{2}{\pi}\right) \sum\limits_{k=1}^{\infty}\left(1-\frac{n_{k}}{n_{k+1}}\right)=\infty
\end{aligned}
$$
\end{proof}

There is also a different way of proving Theorem~\ref{2.13} which is done by observing the fact that

$$
\sum\limits_{k=1}^{\infty}\left\|M_{n_{k+1}} f-M_{n_{k}} f\right\|_{2} \leq \sum\limits_{k=1}^{\infty}\left\|M_{n_{k+1}}-M_{n_{k}}\right\|_{2}\|f\|_{2}
$$

together with the following theorem.

\begin{thm}\label{2.14} Suppose $\left(n_{k}\right)$ is any sequence of integers increasing to $\infty$, and $\tau$ is an ergodic transformation. Then there is a function $f$ in $L^2$ such that

$$
\sum\limits_{k=1}^{\infty}\left\|M_{n_{k+1}} f-M_{n_{k}} f\right\|_{2}=\infty
$$
\end{thm}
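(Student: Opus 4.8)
The plan is to combine a Baire category argument with the Conze principle and an explicit construction on an irrational rotation. First I would suppose, for contradiction, that $\sum_{k=1}^{\infty}\|M_{n_{k+1}}f-M_{n_{k}}f\|_{2}<\infty$ for every $f\in L^{2}(X)$. Writing $S_{N}(f)=\sum_{k=1}^{N}\|M_{n_{k+1}}f-M_{n_{k}}f\|_{2}$, each $S_{N}$ is a continuous seminorm on $L^{2}(X)$, and by assumption $\sup_{N}S_{N}(f)<\infty$ for every $f$. Running a Baire category argument parallel to the one in the proof of Theorem~\ref{2.4}, with the closed sets $\{f:\sup_{N}S_{N}(f)\le L\}$ (which now cover $L^{2}(X)$), then yields a finite constant $C$ with $S_{N}(f)\le C\|f\|_{2}$ for all $N\ge 1$ and all $f\in L^{2}(X)$.

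Next I would transfer this inequality off of $(X,\tau)$. Exactly as in the proof of Theorem~\ref{2.13}, the Conze principle says that the norm of the sublinear operator $f\mapsto S_{N}(f)$, built from the finitely many averaging operators $M_{n_{1}},\dots,M_{n_{N+1}}$, is the same for every aperiodic ergodic measure preserving system; hence the bound $S_{N}(f)\le C\|f\|_{2}$, with the same $C$ for all $N$, must also hold on $L^{2}(\mathbb{T})$ for the rotation $\tau_{0}\colon x\mapsto x+\alpha$ with $\alpha$ irrational. I then aim to contradict this by exhibiting a single $f\in L^{2}(\mathbb{T})$ with $\|f\|_{2}<\infty$ but $\sum_{k=1}^{\infty}\|M_{n_{k+1}}f-M_{n_{k}}f\|_{2}=\infty$ for $\tau_{0}$.

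To build such an $f$, note that since $n_{k}\to\infty$ there is a subsequence $k_{1}<k_{2}<\cdots$ with $n_{k_{j+1}}\ge 2n_{k_{j}}$, so that $1-n_{k_{j}}/n_{k_{j+1}}\ge\tfrac12$. By Lemma~\ref{2.12} there is, for each $j$, a point $\gamma_{j}^{\ast}\in\mathbb{T}$ with
\[
\Bigl|\tfrac{1}{n_{k_{j}}}\sum_{i=1}^{n_{k_{j}}}(\gamma_{j}^{\ast})^{i}-\tfrac{1}{n_{k_{j+1}}}\sum_{i=1}^{n_{k_{j+1}}}(\gamma_{j}^{\ast})^{i}\Bigr|\ \ge\ \tfrac{1}{\pi}.
\]
Since each $m_{n}$ is Lipschitz on $\mathbb{T}$ with a constant at most $n$, and $\{e^{2\pi i\ell\alpha}:\ell\in\mathbb{Z}\}$ is dense in $\mathbb{T}$, I can choose distinct nonzero integers $\ell_{j}$ with $e^{2\pi i\ell_{j}\alpha}$ so close to $\gamma_{j}^{\ast}$ that the difference above, evaluated at $e^{2\pi i\ell_{j}\alpha}$, is still at least $\tfrac{1}{2\pi}$. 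Now set $f=\sum_{j=1}^{\infty}\tfrac1j\,e_{\ell_{j}}$, where $e_{\ell}(x)=e^{2\pi i\ell x}$; then $\|f\|_{2}^{2}=\sum_{j}1/j^{2}<\infty$ since the $e_{\ell_{j}}$ are orthonormal. Using $M_{n}e_{\ell}=m_{n}(e^{2\pi i\ell\alpha})\,e_{\ell}$, Parseval's identity, and the telescoping $\sum_{k=k_{j}}^{k_{j+1}-1}(M_{n_{k+1}}f-M_{n_{k}}f)=M_{n_{k_{j+1}}}f-M_{n_{k_{j}}}f$, one obtains
\[
\sum_{k=1}^{\infty}\|M_{n_{k+1}}f-M_{n_{k}}f\|_{2}\ \ge\ \sum_{j=1}^{\infty}\|M_{n_{k_{j+1}}}f-M_{n_{k_{j}}}f\|_{2}\ \ge\ \sum_{j=1}^{\infty}\frac{1}{2\pi j}\ =\ \infty,
\]
which contradicts the uniform bound. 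Hence the hypothesis fails, so in fact a comeager set of $f\in L^{2}(X)$ satisfies $\sum_{k}\|M_{n_{k+1}}f-M_{n_{k}}f\|_{2}=\infty$.

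The only real subtlety I foresee is the Conze-principle step: one must check that the transference used for operator norms in Theorem~\ref{2.13} applies verbatim to the sublinear quantity $S_{N}$, equivalently that the finite-time orbit structure of $\tau_{0}$ up to time $n_{N+1}$ can be approximately reproduced inside the given aperiodic system via a Rokhlin tower of height $h\gg n_{N+1}$. If one prefers to bypass transference, the same conclusion follows by working directly in $(X,\tau)$: for each $N$, Rokhlin's lemma supplies a tower of height $h\gg n_{N+1}$ on which one sets $f$ equal to $\sum_{j}\tfrac1j\,e^{2\pi i p_{j}\ell/h}$ on the $\ell$-th rung, with $e^{2\pi i p_{j}/h}$ chosen near $\gamma_{j}^{\ast}$; up to errors of size $O(n_{N+1}/h)$ this $f\in L^{2}(X)$ reproduces the computation above and makes $S_{N}(f)$ as large as we please, which is all the Baire argument requires. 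The grouping into the subsequence $(k_{j})$ is essential (it converts a bound of the form $c_j(1-n_{k_j}/n_{k_{j+1}})$ per term into a bound of the form $c_j/2\pi$, which a choice like $c_j=1/j$ can make non-summable even when $\sum(1-n_k/n_{k+1})^2<\infty$); everything else — the subsequence extraction, the Lipschitz approximation, and the Parseval computation — is routine.
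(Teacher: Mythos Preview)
Your proof is correct and follows the same broad outline as the paper's: assume the series converges for every $f$, apply a Baire/uniform boundedness argument to obtain a homogeneous bound $\sum_k\|M_{n_{k+1}}f-M_{n_k}f\|_2\le C\|f\|_2$, transfer this bound off the given system, pass to a subsequence with $n_{k_{j+1}}\ge 2n_{k_j}$, invoke Lemma~\ref{2.12} to locate points $\gamma_j$ where the spectral difference is at least $1/\pi$, and then construct an explicit violation of the transferred bound.

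The endgame, however, is genuinely different. The paper transfers via Rokhlin to $\ell_2(\mathbb Z)$, uses the asymptotic triviality of the transforms $m_n$ to place the points $\gamma_k$ in pairwise disjoint arcs $A_k$ of common length $\delta$, takes $\widehat\varphi=\sum_{k=1}^M 1_{A_k}$, and obtains the contradiction by comparing $C\sqrt M\sqrt\delta$ with $\alpha\sqrt\delta\,M$. You instead transfer via Conze to an irrational rotation on $\mathbb T$, approximate each $\gamma_j^\ast$ by an orbit point $e^{2\pi i\ell_j\alpha}$ (using the Lipschitz bound on $m_n$), and take $f=\sum_j j^{-1}e_{\ell_j}$; the eigenfunction structure and Parseval then give $\|M_{n_{k_{j+1}}}f-M_{n_{k_j}}f\|_2\ge(2\pi j)^{-1}$ directly, so the harmonic series does the work. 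Your construction avoids the disjoint-arcs step and the asymptotic triviality argument entirely, at the cost of needing density of the orbit $\{e^{2\pi i\ell\alpha}\}$ and a Lipschitz estimate on $m_n$; the paper's construction, by contrast, never needs to pick a specific auxiliary dynamical system but must argue more carefully about separation of the spectral peaks. Both routes are short once the transferred bound is in hand. Your remark about checking that Conze transference applies to the sublinear quantity $S_N$ is well taken; since each summand $\|M_{n_{k+1}}^\tau f-M_{n_k}^\tau f\|_2$ is invariant under replacing $\tau$ by a conjugate $\sigma\tau\sigma^{-1}$ (via $f\mapsto f\circ\sigma$) and is norm-continuous in $\tau$ for fixed $f$ and fixed $k$, the weak-approximation step goes through termwise for each finite $N$, which is all you need.
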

\begin{proof}
Suppose not. Then by the Uniform Boundedness Principle, there is a constant $C$ such that

$$
\sum\limits_{k=1}^{\infty}\left\|M_{n_{k+1}} f-M_{n_{k}} f\right\|_{2} \leq C\|f\|_{2}
$$

By using Rokhlin's lemma, this can be transferred to $\mathbb{Z}$ and we get that for all $\varphi$ in $\ell_{2}(\mathbb{Z})$,

\begin{equation*}
\sum\limits_{k=1}^{\infty}\left\|M_{n_{k+1}} \varphi-M_{n_{k}} \varphi\right\|_{2} \leq C\|\varphi\|_{2} \tag{2.2}
\end{equation*}

Note that by repeatedly applying the triangle inequality, we can drop to a subsequence $\left(n_{k_{s}}\right)$ since

$$
\sum\limits_{s=1}^{\infty}\left\|M_{n_{k_{s+1}}} \varphi-M_{n_{k_{s}}} \varphi\right\|_{2} \leq \sum\limits_{k=1}^{\infty}\left\|M_{n_{k+1}} \varphi-M_{n_{k}} \varphi\right\|_{2} \leq C\|\varphi\|_{2}
$$

So without loss of generality, $2 n_{k} \leq n_{k+1}$. Thus by Lemma 2.12, it follows that for all $k$,

$$
\sup _{\gamma}\left|m_{n_{k+1}}(\gamma)-m_{n_{k}}(\gamma)\right| \geq \frac{2}{\pi}\left(1-\frac{n_{k}}{n_{k+1}}\right) \geq \frac{1}{\pi}
$$

For each $k$, let $\gamma_{k}$ be the point where the supremum is acheived. Note that the $M_{n}$ have asymptotically trivial transforms. This means that for all $\delta>0$, the sequence $\left(m_{n}(\gamma)\right)$ converges to zero uniformly on the set $S_{\delta}=\{\gamma \in \mathbb{T}:|\gamma-1| \geq \delta\}$. This can readily be seen from the fact that:

$$
\begin{aligned}
\left|m_{n}(\gamma)\right| & \leq \frac{1}{n}\left|\frac{\gamma-\gamma^{n+1}}{1-\gamma}\right| \\
& \leq \frac{2}{n \delta}
\end{aligned}
$$

\noindent which clearly goes to zero as $n$ goes to $\infty$.

Since $\left(M_{n}\right)$ has asymptotically trivial transforms, it follows that by taking an even sparser sequence of integers $\left(n_{k}\right)$ if necessary, it can be said that for some sequence $\left(n_{k}\right)$, there exists some $\alpha>0$ together with $M$ points $\gamma_{k}$ contained in pairwise disjoint $\operatorname{arcs} A_{k}$ each of length $\delta$ with

$$
\left|m_{n_{k+1}}(\gamma)-m_{n_{k}}(\gamma)\right| \geq \alpha>0
$$

\noindent for all $\gamma$ in the $k^{\text {th }}$ arc $A_{k}$. Note that the size of $\delta$ depends on $M$, but this will be seen to be of no consequence. Also by Lemma 2.12, it follows that $\alpha$ can be chosen independently of both $k$ and $M$.

Now, choose $\varphi$ in $\ell_{2}$ such that $\widehat{\varphi}=\sum\limits_{k=1}^{M} 1_{A_{k}}$. Then

$$
\begin{aligned}
C \sqrt{M} \sqrt{\delta}=C\left(\sum\limits_{k=1}^{M} \delta\right)^{1 / 2} & =C\|\widehat{\varphi}\|_{2}=C\|\varphi\|_{2} \\
& \geq \sum\limits_{k=1}^{\infty}\left\|M_{n_{k+1}} \varphi-M_{n_{k}} \varphi\right\|_{2} \quad \text { by Equation }(2.2) \\
& =\sum\limits_{k=1}^{\infty}\left(\int\left|m_{n_{k+1}}(\gamma)-m_{n_{k}}(\gamma)\right|^{2}|\widehat{\varphi}(\gamma)|^{2} d \gamma\right)^{1 / 2} \\
& \geq \sum\limits_{k=1}^{M}\left(\sum\limits_{j=1}^{M} \int_{A_{j}}\left|m_{n_{k+1}}(\gamma)-m_{n_{k}}(\gamma)\right|^{2} d \gamma\right)^{1 / 2} \\
& \geq \sum\limits_{k=1}^{M}\left(\int_{A_{k}}\left|m_{n_{k+1}}(\gamma)-m_{n_{k}}(\gamma)\right|^{2} d \gamma\right)^{1 / 2} \\
& \geq \sum\limits_{k=1}^{M}\left(\alpha^{2} \delta\right)^{1 / 2}=\alpha \sqrt{\delta} M
\end{aligned}
$$

Note that the $\delta$ 's cancel out and a contradiction is obtained since $M$ is arbitrarily large.
\end{proof}

There is also a second proof of Theorem~\ref{2.5} which uses the $D_{\epsilon}$ version of Theorem~\ref{2.14} which is stated and proved below:

\begin{thm}\label{2.15}
Suppose $\epsilon_{k}$ is any sequence of positive reals decreasing to $0$ . There is a function $f$ in $L^2$ such that

$$
\sum\limits_{k=1}^{\infty}\left\|D_{\epsilon_{k+1}} f-D_{\epsilon_{k}} f\right\|_{2}=\infty
$$
\end{thm}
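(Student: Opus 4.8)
The plan is to follow the template of Theorem~\ref{2.14}, transplanted from the circle/$\mathbb{Z}$ duality to the $L^2(\mathbb{R})$/Fourier-transform duality, where $D_\epsilon$ acts by multiplication by the sinc multiplier. First, suppose the conclusion fails, so that $\sum_{k=1}^\infty\|D_{\epsilon_{k+1}}f-D_{\epsilon_k}f\|_2<\infty$ for every $f\in L^2(\mathbb{R})$. Applying the Uniform Boundedness Principle to the partial sums (equivalently, the Baire category argument used in the proof of Theorem~\ref{2.4}) yields a constant $C$ with $\sum_{k=1}^\infty\|D_{\epsilon_{k+1}}f-D_{\epsilon_k}f\|_2\le C\|f\|_2$ for all $f$. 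Since $\epsilon_k\to 0$, the triangle inequality lets me pass to a subsequence and assume, without loss of generality, that $\epsilon_{k+1}\le\tfrac12\epsilon_k$ for all $k$; the displayed bound is inherited. By Plancherel and the identity $\widehat{D_\epsilon f}(t)=\tfrac{\sin t\epsilon}{t\epsilon}\widehat f(t)$, this bound becomes $\sum_k\big(\int_{\mathbb{R}}|h_k(t)|^2|\widehat f(t)|^2\,dt\big)^{1/2}\le C\|\widehat f\|_2$ (the Plancherel constant cancelling on the two sides), where $h_k(t)=\tfrac{\sin t\epsilon_{k+1}}{t\epsilon_{k+1}}-\tfrac{\sin t\epsilon_k}{t\epsilon_k}$.

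Second, I would locate, for each $k$, an interval on which $|h_k|$ stays above a fixed positive constant, of width comparable to the natural scale $1/\epsilon_k$. Writing $u=\epsilon_k t$ and $r_k=\epsilon_{k+1}/\epsilon_k\le\tfrac12$, one has $h_k(t)=\tfrac{\sin(r_k u)}{r_k u}-\tfrac{\sin u}{u}$. On the fixed interval $u\in[\,7\pi/8,\,9\pi/8\,]$ we have $r_k u\in(0,\,9\pi/16\,]\subset(0,\pi)$, so, since $x\mapsto\tfrac{\sin x}{x}$ is decreasing on $(0,\pi)$ with limit $1$ at $0$, the first term is at least $\tfrac{\sin(9\pi/16)}{9\pi/16}$, while $|\sin u|\le\sin(\pi/8)$ there forces $|\tfrac{\sin u}{u}|\le\tfrac{\sin(\pi/8)}{7\pi/8}$; a numerical check shows $\alpha_0:=\tfrac{\sin(9\pi/16)}{9\pi/16}-\tfrac{\sin(\pi/8)}{7\pi/8}>0$. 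Hence, on the interval $I_k:=[\,7\pi/(8\epsilon_k),\,9\pi/(8\epsilon_k)\,]$, which has length $\pi/(4\epsilon_k)$, we have $|h_k(t)|\ge\alpha_0$. Because $\epsilon_{k+1}\le\tfrac12\epsilon_k$, the left endpoint $7\pi/(8\epsilon_{k+1})\ge 7\pi/(4\epsilon_k)$ of $I_{k+1}$ lies to the right of the right endpoint $9\pi/(8\epsilon_k)$ of $I_k$, so the $I_k$ are pairwise disjoint. (This is a strengthening of Lemma~\ref{2.8}, which only records the pointwise supremum.)

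Third, I would close with the counting argument from Theorem~\ref{2.14}. Since $\inf_k|I_k|=|I_1|=\pi/(4\epsilon_1)>0$, fix $\delta$ with $0<\delta\le\pi/(4\epsilon_1)$ and, for every $k$, choose $I_k'\subseteq I_k$ with $|I_k'|=\delta$; these remain pairwise disjoint. Given an arbitrary integer $M\ge1$, set $g=\sum_{k=1}^M\mathbf{1}_{I_k'}\in L^2(\mathbb{R})$ and take $f\in L^2(\mathbb{R})$ with $\widehat f=g$ (possible as the Fourier transform is onto $L^2(\mathbb{R})$). Then $\|\widehat f\|_2=\sqrt{M\delta}$, whereas for each $k\le M$, $\big(\int_{\mathbb{R}}|h_k|^2|\widehat f|^2\big)^{1/2}\ge\big(\int_{I_k'}\alpha_0^2\,dt\big)^{1/2}=\alpha_0\sqrt\delta$, using $|h_k|\ge\alpha_0$ on $I_k'$. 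Feeding these into the boundedness inequality gives $\alpha_0\sqrt\delta\,M\le C\sqrt{M\delta}$, i.e. $\sqrt M\le C/\alpha_0$; since $M$ was arbitrary this is absurd, establishing the theorem. (A torus version, giving a second proof of Theorem~\ref{2.6}, would run the same way on the Fourier-series side, using that $|I_k|\to\infty$ so that each $I_k$ eventually contains $\delta$ integers on which to place the indicator.)

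I expect the decisive step to be the second paragraph. The point is that a pointwise lower bound at a single frequency, as recorded in Lemma~\ref{2.8}, is not enough: one must produce a genuine interval on which the sinc-difference is bounded below by a constant uniform in the ratio $r_k\le\tfrac12$, and one must simultaneously ensure that the corresponding intervals $I_k$, living at the different dyadic scales $1/\epsilon_k$, are pairwise disjoint. Once that is in hand, the choice of a common width $\delta$ for the subintervals $I_k'$ — so that $\delta$ cancels in the final inequality, exactly as the common length of the arcs cancels in the proof of Theorem~\ref{2.14} — is the small conceptual twist that lets $M\to\infty$ deliver the contradiction.
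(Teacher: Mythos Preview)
Your proof is correct, but it takes a genuinely different route from the paper's. You transplant the Fourier-side counting argument of Theorem~\ref{2.14} to $L^2(\mathbb{R})$: pass to a lacunary subsequence, locate for each $k$ a frequency interval $I_k$ at scale $1/\epsilon_k$ on which the sinc-difference multiplier $h_k$ is uniformly bounded below, observe that lacunarity makes the $I_k$ pairwise disjoint, and then plant indicator functions on equal-width subintervals to force the $\sqrt{M}$ versus $M$ contradiction. The paper instead works entirely on the spatial side: after the same Uniform Boundedness reduction, it tests the inequality against a normalized approximate identity $f_\delta=\tfrac{1}{\sqrt{2\delta}}1_{(-\delta,\delta)}$, lets $\delta\to 0$ so that $g_k*f_\delta\to g_k=\varphi_{\epsilon_k}-\varphi_{\epsilon_{k+1}}$ in $L^2$, and concludes $\sum_{k=1}^M\|g_k\|_2\le C$ for all $M$; an explicit computation then gives $\|g_k\|_2=\tfrac{1}{\sqrt{2}}\big(\tfrac{1}{\epsilon_{k+1}}-\tfrac{1}{\epsilon_k}\big)^{1/2}$, and the telescoping lower bound $\sum_{k=1}^M\sqrt{a_k}\ge\sqrt{\sum a_k}$ with $a_k=\tfrac{1}{\epsilon_{k+1}}-\tfrac{1}{\epsilon_k}$ finishes it. The paper's argument is shorter and avoids both the subsequence reduction and the interval analysis of the sinc multiplier; your argument has the virtue of being a direct parallel to the ergodic case and making the analogy between Theorems~\ref{2.14} and~\ref{2.15} transparent.
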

\begin{proof}
Suppose not. Then by the Uniform Boundedness Principle, there is a constant $C$ such that for all $f$ in $L^2$,

\begin{equation*}
\sum\limits_{k=1}^{\infty}\left\|D_{\epsilon_{k+1}} f-D_{\epsilon_{k}} f\right\|_{2} \leq C\|f\|_{2} \tag{2.3}
\end{equation*}

Fix any $M$, and for $1 \leq k \leq M$, let

$$
g_{k}(x)=\frac{1_{\left(-\epsilon_{k}, \epsilon_{k}\right)}(x)}{2 \epsilon_{k}}-\frac{1_{\left(-\epsilon_{k+1}, \epsilon_{k+1}\right)}(x)}{2 \epsilon_{k+1}} .
$$

For all $\delta$, let $f_{\delta}(x)=\frac{1_{(-\delta, \delta)}(x)}{\sqrt{2 \delta}}$ so that $\left\|f_{\delta}\right\|_{2}=1$. Since $D_{\epsilon_{k}} f=g_{k} * f_{\delta}$, inequality (2.3) implies that for all $\delta$,

$$
\sum\limits_{k=1}^{M}\left(\int\left|g_{k} * f_{\delta}\right|^{2} d x\right)^{1 / 2} \leq C
$$

But for each $k, g_{k} \in L^2$. So $\left\|g_{k} * f_{\delta}-g_{k}\right\|_{2} \rightarrow 0$ as $\delta \rightarrow 0$. Thus

$$
\sum\limits_{k=1}^{M}\left(\int\left|g_{k}\right|^{2} d x\right)^{1 / 2} \leq C
$$

which is equivalent to saying that for all $M$,

$$
\frac{1}{2} \sum\limits_{k=1}^{M}\left(\int\left|\frac{1_{\left(-\epsilon_{k}, \epsilon_{k}\right)}(x)}{\epsilon_{k}}-\frac{1_{\left(-\epsilon_{k+1}, \epsilon_{k+1}\right)}(x)}{\epsilon_{k+1}}\right|^{2} d x\right)^{1 / 2} \leq C
$$

which, in turn is equivalent to saying that for all $M$

$$
\frac{1}{2} \sum\limits_{k=1}^{M}\left(2\left[\left(\frac{1}{\epsilon_{k+1}}-\frac{1}{\epsilon_{k}}\right)^{2} \epsilon_{k+1}+\frac{\epsilon_{k}-\epsilon_{k+1}}{\epsilon_{k}^{2}}\right]\right)^{1 / 2} \leq C .
$$

The left hand side of which simplies to

$$
\frac{1}{\sqrt{2}} \sum\limits_{k=1}^{M}\left(\frac{1}{\epsilon_{k+1}}-\frac{1}{\epsilon_{k}}\right)^{1 / 2}
$$

But for all $M$,

$$
\begin{aligned}
\sum\limits_{k=K_{o}}^{M} \sqrt{\left(\frac{1}{\epsilon_{k+1}}-\frac{1}{\epsilon_{k}}\right)} & \geq \sqrt{\sum\limits_{k=K_{o}}^{M}\left(\frac{1}{\epsilon_{k+1}}-\frac{1}{\epsilon_{k}}\right)} \\
& =\sqrt{\frac{1}{\epsilon_{M+1}}-\frac{1}{\epsilon_{1}}}
\end{aligned}
$$

Upon observing that $\lim _{M \rightarrow \infty} \sqrt{\frac{1}{\epsilon_{M+1}}-\frac{1}{\epsilon_{1}}}=\infty$, we see that we have a contradiction.
\end{proof}
Note that Theorem~\ref{2.15} provides a second proof of Theorem~\ref{2.5} if one makes observations analogous to those made just prior to the statement of Theorem~\ref{2.14}.

Theorem~\ref{2.15} and Theorem~\ref{2.14} also hold true for more general $p$.  For example, the following theorem is true:

\begin{thm}\label{2.16}  If $1<p<\infty$, then for all sequences $\left\{\epsilon_{k}\right\}$ converging down to zero, there exists an $f$ in $L^p$ such that

$$
\sum\limits_{k=1}^{\infty}\left\|D_{\epsilon_{k+1}} f-D_{\epsilon_{k}} f\right\|_{p}=\infty
$$
\end{thm}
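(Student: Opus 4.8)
The plan is to follow the pattern of Theorems~\ref{2.14} and~\ref{2.15}: reduce, via the Uniform Boundedness Principle, to a uniform estimate, and then contradict it by producing a single function that ``resonates'' with infinitely many of the scales $\epsilon_k$ at once. Two ingredients are needed. The elementary one is a lower bound for $\|g_k\|_p$, where $g_k=\varphi_{\epsilon_k}-\varphi_{\epsilon_{k+1}}$; the substantial one is a Littlewood--Paley substitute for the Plancherel identity that carries out, for general $p$, the bookkeeping done in $L^2$ in Theorem~\ref{2.15}.

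Suppose the conclusion fails. Then $\sum_k\|D_{\epsilon_{k+1}}f-D_{\epsilon_k}f\|_p<\infty$ for every $f\in L^p$, so by the Uniform Boundedness Principle (as in Theorems~\ref{2.14} and~\ref{2.15}) there is a $C$ with
\[
\sum_{k=1}^{\infty}\|D_{\epsilon_{k+1}}f-D_{\epsilon_k}f\|_p\le C\|f\|_p,\qquad\text{equivalently}\qquad \sum_{k=1}^{\infty}\|g_k* f\|_p\le C\|f\|_p,
\]
for all $f\in L^p(\R)$. On the annulus $\{\epsilon_{k+1}<|x|<\epsilon_k\}$, which has measure $2(\epsilon_k-\epsilon_{k+1})$, the function $g_k$ is the constant $1/(2\epsilon_k)$, so $\|g_k\|_p^p\ge 2(\epsilon_k-\epsilon_{k+1})(2\epsilon_k)^{-p}$. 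With $a_k=1-\epsilon_{k+1}/\epsilon_k\in(0,1]$ and $q$ conjugate to $p$ this gives $\|g_k\|_p\ge 2^{-1/q}a_k^{1/p}\epsilon_k^{-1/q}\ge 2^{-1/q}\epsilon_1^{-1/q}a_k$, using $\epsilon_k\le\epsilon_1$ and $a_k^{1/p}\ge a_k$. Hence $\sum_k\|g_k\|_p=\infty$ by Lemma~\ref{2.9}. (This already yields the $L^p$-operator-norm version of Theorem~\ref{2.5}; it does not yet give Theorem~\ref{2.16}, because the individual norms $\|D_{\epsilon_{k+1}}-D_{\epsilon_k}\|_p\asymp a_k$ can be $q$-summable, so a useful test function cannot be concentrated at a single scale and must spread across many of the $\epsilon_k$.)

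To build such a test function, pass first to a lacunary subsequence of $(\epsilon_k)$ with $\epsilon_{k+1}\le r\epsilon_k$ for a fixed small $r$ --- legitimate since dropping terms only shrinks $\sum_k\|g_k* f\|_p$ after repeated triangle inequalities, exactly as in Theorem~\ref{2.14}. Then $a_k$ is bounded below, so by Lemma~\ref{2.8} and Bernstein's inequality (Theorem~\ref{2.7}) the multiplier $m_k=\widehat{g_k}$ satisfies $|m_k|\gtrsim 1$ on a subinterval $J_k$ of its octave $[\,1/\epsilon_k,\,1/\epsilon_{k+1}\,]$ of comparable length, and on $J_k$ both $m_k$ and $1/m_k$ are H\"ormander--Mikhlin multipliers of $L^p$-norm $O(1)$. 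Now let $f=\sum_{k=1}^{N}c_k f_k$ with $\widehat{f_k}$ a smooth bump supported on $J_k$: the lacunary gaps make the cross terms $g_k* f_j$ ($j\ne k$) summably small, so $\|g_k* f\|_p\asymp|c_k|\,\|f_k\|_p$, while Littlewood--Paley theory for $1<p<\infty$ together with a Khintchine-type inequality for lacunary sums gives $\|f\|_p\asymp\sqrt N\,D$ when all the products $c_k\|f_k\|_p=D$ are equal. This choice (the gliding hump --- the same balancing that places a lacunary series with coefficients $\sim1/k$ in $L^p$ while its $\ell^1$ norm diverges) makes $\sum_{k}\|g_k* f\|_p\gtrsim ND$ and $\|f\|_p\lesssim\sqrt N\,D$, so $\sum_k\|g_k* f\|_p\ge c\sqrt N\,\|f\|_p$, which contradicts the uniform bound once $N>(C/c)^2$.

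The step I expect to be the main obstacle is precisely this construction, since it is where the lack of Plancherel is felt. In the $p=2$ case $\|g_k* f\|_2^2=(2\pi)^{-1}\int|m_k|^2|\widehat f|^2$ and everything reduces to arithmetic with $\int|m_k|^2=\pi(\epsilon_{k+1}^{-1}-\epsilon_k^{-1})$; for general $p$ one must instead (i) bound the cross terms $g_k* f_j$ using the decay of $m_k$ away from its octave; (ii) bound $\|g_k* f_k\|_p$ above and below by $\|f_k\|_p$ via $L^p$-multiplier estimates on a single dyadic block; and (iii) compare $\|f\|_p$ with $\sqrt N$ times the common piece size through the Littlewood--Paley square function and a lacunary Khintchine estimate. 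Getting the constants in (i)--(iii) uniform in $k$, in $N$, and in the given sequence $(\epsilon_k)$ is the technical core; the rest is the elementary computation above plus Lemma~\ref{2.9}.
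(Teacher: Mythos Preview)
Your strategy is sound and would yield a correct proof, but it is a genuinely different route from the paper's. The paper does not build a multi-scale test function at all; instead it quotes the Akcoglu--Jones--Schwartz variation result, which produces (for any lacunary $(\epsilon_k)$) a single $f\in L_\infty$ with $\sum_k |D_{\epsilon_{k+1}}f - D_{\epsilon_k}f| = \infty$ a.e. After passing to a lacunary subsequence, the paper simply uses $\|\cdot\|_p \ge \|\cdot\|_1$ and Tonelli to conclude $\sum_k \|D_{\epsilon_{k+1}}f - D_{\epsilon_k}f\|_p = \infty$. The paper itself remarks that this ``avoids any difficulties in computation due to taking $p\neq 2$ at the expense of invoking the nontrivial result in~\cite{AJS}.'' So the trade-off is clear: the paper's argument is three lines long but imports a substantial pointwise-variation theorem, while your argument is self-contained but requires the Littlewood--Paley and multiplier machinery you describe.

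One small correction to your sketch: the scaling $\|f\|_p \asymp \sqrt{N}\,D$ via Littlewood--Paley is only valid for $p\ge 2$ (Minkowski in $L^{p/2}$). For $1<p<2$ the square-function estimate yields instead $\|f\|_p \lesssim N^{1/p} D$, using $\ell^p\hookrightarrow\ell^2$ pointwise. This does not damage the argument, since $N^{1/\min(p,2)} = o(N)$ in either regime, but the exponent should be stated correctly. The remaining technical points you flag --- uniform H\"ormander--Mikhlin bounds for $m_k$ and $1/m_k$ on $J_k$, and geometric decay of the cross terms $g_k*f_j$ --- are genuine work but are routine once the lacunarity ratio $r$ is taken small enough, since after rescaling by $\epsilon_k$ one is checking smoothness and nonvanishing of a single fixed function on a fixed compact interval.
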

\begin{proof}
Akcoglu, Jones, and Schwartz~\cite{AJS} have shown that if $\left\{\epsilon_{k}\right\}$ is a sequence satisfying $\frac{\epsilon_{k+1}}{\epsilon_{k}} \leq$ $\delta<1$, then there is an $f$ in $L_{\infty}$ such that $\sum\limits_{k=1}^{\infty}\left|D_{\epsilon_{k+1}} f-D_{\epsilon_{k}} f\right|=\infty$ a.e.

Choosing a subsequence $\left\{\epsilon_{k_{s}}\right\}$ of $\left\{\epsilon_{k}\right\}$ so that $\frac{\epsilon_{k_{s+1}}}{\epsilon_{k_{s}}} \leq \delta<1$, then

$$
\begin{aligned}
\sum\limits_{k=1}^{\infty}\left\|D_{\epsilon_{k+1}} f-D_{\epsilon_{k}} f\right\|_{p} & \geq \sum\limits_{s=1}^{\infty}\left\|D_{\epsilon_{k_{s+1}}} f-D_{\epsilon_{k_{s}}} f\right\|_{p} \\
& \geq \sum\limits_{s=1}^{\infty}\left\|D_{\epsilon_{k_{s+1}}} f-D_{\epsilon_{k_{s}}} f\right\|_{1}=\int\left(\sum\limits_{s=1}^{\infty}\left|D_{\epsilon_{k_{s+1}}} f-D_{\epsilon_{k_{s}}} f\right|\right) d m=\infty
\end{aligned}
$$
\end{proof}

The proof given above for Theorem~\ref{2.16} avoids any difficulties in computation due to taking $p \neq 2$ at the expense of invoking the nontrivial result in ~\cite{AJS}.

The following theorem in conjunction with Theorem~\ref{2.2} proves that for any sequence of positive integers $\left(n_{k}\right)$ tending to infinity, there exists a function $f$ in $L^1$ such that the sum $\sum\limits_{k=1}^{\infty}\left(M_{n_{k+1}} f-M_{n_{k}} f\right)$ is not weakly unconditionally convergent (and hence not absolutely convergent.)

\begin{thm}\label{2.17}
Let $\tau$ be any ergodic transformation of a non-atomic probability space $(X, m)$ then for any sequence of positive integers $\left\{n_{k}\right\}$ tending to infinity, there exists a function $f$ in $L^1$ such that

$$
\sup _{M \geq 1} \sup _{\left|c_{k}\right| \leq 1}\left\|\sum\limits_{k=1}^{M} c_{k}\left(M_{n_{k+1}} f-M_{n_{k}} f\right)\right\|_{1}=\infty
$$
\end{thm}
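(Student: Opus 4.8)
The plan is to argue by contradiction, along the lines of the proof of Theorem~\ref{2.14}. Suppose that for every $f\in L^1(X)$ the supremum in the statement is finite. For each $M$ and each string $(c_1,\dots,c_M)$ with $|c_k|\le 1$, the map $f\mapsto \sum_{k=1}^M c_k(M_{n_{k+1}}f-M_{n_k}f)$ is a bounded operator on $L^1(X)$, being a finite linear combination of the contractions $M_n$. So by the Uniform Boundedness Principle there is a constant $C$ with
$$\Bigl\|\sum_{k=1}^M c_k\bigl(M_{n_{k+1}}f-M_{n_k}f\bigr)\Bigr\|_1\le C\,\|f\|_1$$
for all such $f$, $M$, and $(c_k)$. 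Exactly as in the proof of Theorem~\ref{2.14}, Rokhlin's lemma lets this inequality be transferred to the integer shift: for every $\varphi\in\ell^1(\mathbb{Z})$ the same inequality holds with $M_n\varphi(m)=\frac1n\sum_{j=1}^n\varphi(m+j)$ and with $\|\cdot\|_1$ the $\ell^1(\mathbb{Z})$ norm.

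Next I would test this inequality against $\varphi=\delta_0$, the point mass at $0$, which has $\|\delta_0\|_1=1$ and satisfies $M_n\delta_0=\frac1n\,\1_{\{-n,\dots,-1\}}$. Writing $f_k:=M_{n_{k+1}}\delta_0-M_{n_k}\delta_0$ and, for each $j$, $A_j:=\{-n_{j+1},\dots,-n_j-1\}$ (a block of $n_{j+1}-n_j$ integers, empty when $n_{j+1}=n_j$), one checks that the blocks $A_j$ are pairwise disjoint and that on $A_j$
$$\sum_{k=1}^M c_k f_k(m)=\frac{c_j}{n_{j+1}}+S_j,\qquad S_j:=\sum_{k=j+1}^M c_k\Bigl(\frac1{n_{k+1}}-\frac1{n_k}\Bigr),$$
since $M_{n_k}\delta_0$ vanishes on $A_j$ for $k\le j$ and equals $1/n_k$ there for $k>j$. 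Because $n_{k+1}\ge n_k$, the tail obeys $|S_j|\le \tfrac1{n_{j+1}}-\tfrac1{n_{M+1}}<\tfrac1{n_{j+1}}$.

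The point I would then emphasize is that a naive choice such as $c_k\equiv 1$ makes these block values telescope down to the negligible quantity $1/n_{M+1}$, so the coefficients must instead be chosen to reinforce the tail: running $j$ from $M$ down to $1$, set $c_j=1$ if $S_j\ge 0$ and $c_j=-1$ otherwise (legitimate because $S_j$ involves only the already-chosen $c_{j+1},\dots,c_M$). Then $\bigl|\tfrac{c_j}{n_{j+1}}+S_j\bigr|\ge\tfrac1{n_{j+1}}$ for every $j$, and summing the absolute value over the disjoint blocks gives
$$\Bigl\|\sum_{k=1}^M c_k f_k\Bigr\|_1\ge\sum_{j=1}^M\frac{|A_j|}{n_{j+1}}=\sum_{j=1}^M\Bigl(1-\frac{n_j}{n_{j+1}}\Bigr).$$
By Lemma~\ref{2.11} the right side diverges as $M\to\infty$, contradicting the bound $C\|\delta_0\|_1=C$. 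Hence some $f\in L^1(X)$ makes the supremum infinite; by Theorem~\ref{2.2} this also shows that $\sum_k(M_{n_{k+1}}f-M_{n_k}f)$ is not weakly unconditionally convergent in $L^1(X)$.

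The step I expect to be the main obstacle is the sign-selection together with the exact cancellation bookkeeping on the blocks $A_j$: the content is the realization that the differences must be weighted so as to add up, rather than telescope away, the jump of size $1/n_{j+1}$ living on $A_j$. The transference to $\ell^1(\mathbb{Z})$ is routine given that the same device was already used for Theorem~\ref{2.14}.
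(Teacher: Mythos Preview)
Your proof is correct and reaches the same endpoint as the paper --- transferring to $\ell^1(\mathbb Z)$ via Rokhlin, testing against $\delta_0$, and arriving at $\sum_j(1-n_j/n_{j+1})=\infty$ --- but the route is genuinely different and more elementary. The paper does not select signs by hand; instead it inserts Rademacher functions $r_k(t)$ for the $c_k$, integrates in $t$, and invokes Khintchine's inequality to convert the uniform $L^1$ bound into a square-function bound $\int_X\bigl(\sum_k|M_{n_{k+1}}f-M_{n_k}f|^2\bigr)^{1/2}\le C'\|f\|_1$, which in turn dominates the maximal function $\sup_k|M_{n_{k+1}}f-M_{n_k}f|$. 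It is this maximal inequality on $\ell^1(\mathbb Z)$ that is finally tested against $\delta_0$. Your recursive sign selection on the blocks $A_j$ bypasses the Rademacher--Khintchine machinery entirely, giving a direct combinatorial argument; what the paper's detour buys in exchange is the intermediate square-function and maximal-function inequalities, which are of independent interest and exhibit the failure of weak unconditional convergence as an instance of the unboundedness of the oscillation operator on $L^1$.
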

\begin{proof}
Suppose not. In other words, suppose that for all $f$ in $L^1$,

$$
\sup _{M \geq 1\left|c_{k}\right| \leq 1} \sup _{k=1}\left\|\sum\limits_{k=1}^{M} c_{k}\left(M_{n_{k+1}} f-M_{n_{k}} f\right)\right\|_{1}<\infty
$$

The Uniform Boundedness Principle implies that there is some uniform constant $C$ such that:

$$
\sup _{M \geq 1} \sup _{\left|c_{k}\right| \leq 1}\left\|\sum\limits_{k=1}^{M} c_{k}\left(M_{n_{k+1}} f-M_{n_{k}} f\right)\right\|_{1} \leq C\|f\|_{1}
$$

Now replace the coefficients $c_{k}$ with the Rademacher functions $r_{k}(t)$ and then integrate both sides with respect to $t$. This implies that:

$$
\int_{0}^{1} \int_{X}\left|\sum\limits_{k=1}^{M} r_{k}(t)\left(M_{n_{k+1}} f-M_{n_{k}} f\right)\right| d x d t \leq \int_{0}^{1} C d t\|f\|_{1}
$$

Which by Fubini's Theorem becomes:

$$
\int_{X} \int_{0}^{1}\left|\sum\limits_{k=1}^{M} r_{k}(t)\left(M_{n_{k+1}} f-M_{n_{k}} f\right)\right| d t d x \leq \int_{0}^{1} C d t\|f\|_{1}=C\|f\|_{1}
$$

We now use Theorem 8.4 on p. 213 of Volume I of Zygmund~\cite{Zygmund} which says that there exist positive constants $C_{1}, C_{2}$ such that for any $\left(b_{k}\right)$ in $\ell_{2}(\mathbb{Z})$,

\begin{equation*}
C_{1}\left(\int_{0}^{1}\left|\sum\limits_{k=1}^{\infty} r_{k}(t) b_{k}\right|^{p} d t\right)^{1 / p} \leq\left(\sum\limits_{k=1}^{\infty}{b_{k}}^{2}\right)^{1 / 2} \leq C_{2}\left(\int_{0}^{1}\left|\sum\limits_{k=1}^{\infty} r_{k}(t) b_{k}\right|^{p} d t\right)^{1 / p} \tag{2.4}
\end{equation*}

for all $p>0$. But this implies that

$$
\int_{X}\left(\sum\limits_{k=1}^{M}\left(M_{n_{k+1}} f-M_{n_{k}} f\right)^{2}\right)^{1 / 2} \leq C C_{2}\|f\|_{1}
$$

for all $M \geq 1$. Thus, since all the terms of the sum are positive, the Monotone Convergence Theorem implies

$$
\int_{X}\left(\sum\limits_{k=1}^{\infty}\left(M_{n_{k+1}} f-M_{n_{k}} f\right)^{2}\right)^{1 / 2} \leq C C_{2}\|f\|_{1}
$$

which in turn implies that

$$
\int_{X} \sup _{k}\left|M_{n_{k+1}} f-M_{n_{k}} f\right| d x \leq C C_{2}\|f\|_{1} .
$$

Since $\tau$ is ergodic, a Rokhlin tower construction can be used to provide the following inequality:

$$
\sum\limits_{m \in \mathbb{Z}} \sup _{k}\left|M_{n_{k+1}} \varphi(m)-M_{n_{k}} \varphi(m)\right| \leq C C_{2}\|\varphi\|_{\ell_{1}}
$$

for all $\varphi$ in $\ell_{1}$. We recall that

$$
M_{n_{k}} \varphi(m)=\frac{1}{n_{k}} \sum\limits_{j=1}^{n_{k}} \varphi(m+j)
$$

A contradiction will be arrived at by letting $\varphi$ be $\delta_{o}$. Note that

$$
M_{n_{k}} \delta_{o}(m)=\frac{1}{n_{k}} \text { when }-m=1, \ldots, n_{k} \text {. }
$$

Thus for each fixed $m>0$,

$$
\sup _{k}\left|M_{n_{k+1}} \delta_{o}(-m)-M_{n_{k}} \delta_{o}(-m)\right| \geq \frac{1}{n_{j+1}}
$$

where $j$ is chosen so that $n_{j}<m \leq n_{j+1}$. Putting all of this together yields:

$$
\begin{aligned}
\infty>C C_{2} 1 & =C C_{2}\left\|\delta_{o}\right\|_{\ell_{1}} \\
& \geq \sum\limits_{m \in \mathbb{Z}} \sup _{k}\left|M_{n_{k+1}} \delta_{o}(m)-M_{n_{k}} \delta_{o}(m)\right| \\
& \geq \sum\limits_{j=1}^{\infty} \sum\limits_{n_{j}<m \leq n_{j+1}} \sup _{k}\left|M_{n_{k+1}} \delta_{o}(-m)-M_{n_{k}} \delta_{o}(-m)\right| \\
& \geq \sum\limits_{j=1}^{\infty} \sum\limits_{n_{j}<m \leq n_{j+1}} \frac{1}{n_{j+1}} \\
& =\sum\limits_{j=1}^{\infty} \frac{n_{j+1}-n_{j}}{n_{j+1}}=\infty
\end{aligned}
$$

The contradiction is obtained.
\end{proof}

The $D_{\epsilon}$ version of Theorem~\ref{2.17} also holds true and is stated below. Theorem~\ref{2.18} can be proved using Rademacher functions together with an approximation to the identity.

\begin{thm}\label{2.18} Let $\left\{\epsilon_{k}\right\}$ be a decreasing sequence of positive reals convergence to zero. Let $X$ be either $\mathbb{R}$ or $\mathbb{T}$. Then there exists a function $f$ in $L^1(X)$ such that

$$
\sup _{M \geq 1} \sup _{\left|c_{k}\right| \leq 1}\left\|\sum\limits_{k=1}^{M} c_{k}\left(D_{\epsilon_{k+1}} f-D_{\epsilon_{k}} f\right)\right\|_{1}=\infty
$$
\end{thm}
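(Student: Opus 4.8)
The plan is to mimic the proof of Theorem~\ref{2.17}, with the Rokhlin‑tower/$\delta_0$ step replaced by testing against an approximate identity in $L^1(X)$. I would argue by contradiction. Assuming the displayed supremum is finite for every $f\in L^1(X)$, the Uniform Boundedness Principle — applied to the family of bounded operators $f\mapsto \sum_{k=1}^{M}c_k(D_{\epsilon_{k+1}}f-D_{\epsilon_k}f)$ indexed by $M\ge 1$ and $(c_k)$ with $|c_k|\le 1$ — produces a constant $C$ with
$$\sup_{M\ge 1}\sup_{|c_k|\le 1}\left\|\sum_{k=1}^{M}c_k\bigl(D_{\epsilon_{k+1}}f-D_{\epsilon_k}f\bigr)\right\|_1\le C\|f\|_1$$
for all $f\in L^1(X)$. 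Substituting the Rademacher functions $r_k(t)$ for the $c_k$, integrating over $t\in[0,1]$, interchanging integrals by Fubini's theorem, and applying Khintchine's inequality (inequality $(2.4)$, with $p=1$ and $b_k=(D_{\epsilon_{k+1}}f-D_{\epsilon_k}f)(x)$) exactly as in Theorem~\ref{2.17}, I obtain
$$\int_X\left(\sum_{k=1}^{M}\bigl|D_{\epsilon_{k+1}}f(x)-D_{\epsilon_k}f(x)\bigr|^2\right)^{1/2}dx\le CC_2\|f\|_1\qquad\text{for all }M\ge 1 .$$

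Next I would specialize to $f=f_\delta$, where $f_\delta$ is the $L^1$‑normalized indicator of a small symmetric neighbourhood of the identity (so $\|f_\delta\|_1=1$ and $\{f_\delta\}$ is an approximate identity), and set $g_k:=\varphi_{\epsilon_{k+1}}-\varphi_{\epsilon_k}$, so that $D_{\epsilon_{k+1}}f_\delta-D_{\epsilon_k}f_\delta=g_k*f_\delta$. Since each $g_k\in L^1(X)$ and $\{f_\delta\}$ is an approximate identity, $g_k*f_\delta\to g_k$ in $L^1(X)$ as $\delta\to 0$; passing to a subsequence of $\delta$'s along which $g_1*f_\delta,\dots,g_M*f_\delta$ all converge a.e.\ and applying Fatou's lemma gives $\int_X\bigl(\sum_{k=1}^{M}|g_k|^2\bigr)^{1/2}dx\le CC_2$ for every $M$, so by the Monotone Convergence Theorem
$$\int_X\sup_k|g_k(x)|\,dx\le\int_X\left(\sum_{k=1}^{\infty}|g_k(x)|^2\right)^{1/2}dx\le CC_2<\infty .$$

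The contradiction then comes from an explicit lower bound for $\int_X\sup_k|g_k|$. For $x$ in the annulus (arc, in the $\mathbb{T}$ case) $\epsilon_{j+1}<|x|\le\epsilon_j$ one reads off from the definition of $\varphi_\epsilon$ that $g_j(x)=-\tfrac1{2\epsilon_j}$, hence $\sup_k|g_k(x)|\ge\tfrac1{2\epsilon_j}$ there; integrating over these pairwise disjoint annuli,
$$\int_X\sup_k|g_k(x)|\,dx\ \ge\ \sum_j\frac{|\{x:\epsilon_{j+1}<|x|\le\epsilon_j\}|}{2\epsilon_j}\ =\ \sum_j\left(1-\frac{\epsilon_{j+1}}{\epsilon_j}\right),$$
where in the $\mathbb{T}$ case the sum runs over the cofinitely many $j$ with $\epsilon_j<\pi$. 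This diverges by Lemma~\ref{2.9}, contradicting the finiteness established above. I expect the only genuinely delicate point to be the passage to the limit $\delta\to 0$ in the middle step — that the finite square function survives is handled cleanly by extracting an a.e.\ convergent subsequence (finitely many terms) before invoking Fatou — together with the routine check that, after discarding the finitely many indices with $\epsilon_k\ge\pi$, the approximate‑identity and annulus computations go through verbatim on $\mathbb{T}$.
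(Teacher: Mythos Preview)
Your proposal is correct and follows precisely the approach the paper indicates (``Rademacher functions together with an approximation to the identity''): you reproduce the Rademacher/Khintchine reduction from Theorem~\ref{2.17}, replace the Rokhlin/$\delta_0$ step by an $L^1$ approximate identity $f_\delta$ to pass to the kernels $g_k=\varphi_{\epsilon_{k+1}}-\varphi_{\epsilon_k}$, and then obtain the contradiction from the explicit annulus estimate and Lemma~\ref{2.9}. The only cosmetic remark is that the Fatou step can be avoided entirely, since $\bigl(\sum_{k=1}^M|g_k*f_\delta|^2\bigr)^{1/2}\to\bigl(\sum_{k=1}^M|g_k|^2\bigr)^{1/2}$ in $L^1$ directly by the triangle inequality for the $\ell_2$-norm on $\mathbb{R}^M$; but your subsequence-plus-Fatou argument is also perfectly sound.
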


\section{Ergodic Averages}\label{3}

Let us now consider the usual averages in ergodic theory, $M_{n} f=\frac{1}{n} \sum\limits_{k=1}^{n} f \circ \tau^{k}$, where $\tau$ is a measure-preserving transformation of a probability space $(X, m)$. Let $\left(n_{k}\right)$ be a non-decreasing sequence. We are interested in determining when the series of differences $\sum\limits_{k=1}^{\infty} M_{n_{k+1}} f-M_{n_{k}} f$ is unconditionally convergent for all $f \in L^2(X, m)$. This is motivated partly by the fact that for any such $n_{k}$, a martingale difference series of the form $\sum\limits_{k=1}^{\infty} E_{n_{k+1}} f-E_{n_{k}} f$, for the martingale $E_{n} f$ in $L^2$, will be unconditionally convergent simply because of the orthogonality of the terms. But in the ergodic theory context, while the averages $M_{n} f$ mimic many of the properties of the martingale $E_{n} f$, orthogonality is certainly not one of those properties. Instead, in the ergodic theory case, it will be certain aspects of the spectral values of $M_{n}$ which are important. As in the previous section, we let $m_{n}(\gamma)$ denote the Fourier transform of $M_{n}$, defined by $m_{n}(\gamma)=\frac{1}{n} \sum\limits_{k=1}^{n} \gamma^{k}$ for all $\gamma \in \mathbb{T}$, where $\mathbb{T}=\{\gamma \in \mathbb{C}:|\gamma|=1\}$. We then have this general result.

\begin{thm}\label{3.1} Let $\tau$ be any ergodic transformation of a non-atomic probability space $(X, m)$. Given a non-decreasing sequence $\left(n_{k}\right)$, the following are equivalent:

(a) $\quad \sum\limits_{k=1}^{\infty} M_{n_{k+1}} f-M_{n_{k}} f$ is unconditionally convergent for all $f \in L^2(X, m)$,

b) $\quad \sup\limits_{|\gamma|=1} \sum\limits_{k=1}^{\infty}\left|m_{n_{k+1}}(\gamma)-m_{n_{k}}(\gamma)\right|<\infty$.
\end{thm}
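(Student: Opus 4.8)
The plan is to apply the homogeneous form of unconditional convergence (Theorem~\ref{2.4}) with $p=2$, and then to evaluate the resulting operator norms spectrally, exactly in the spirit of the proof of Theorem~\ref{2.13}. Write $T_k=M_{n_{k+1}}-M_{n_k}$ and, for $M\ge1$ and scalars $|c_k|\le1$, put $S_{M,c}=\sum_{k=1}^M c_k T_k$ together with its symbol
$$P_{M,c}(\gamma)=\sum_{k=1}^M c_k\bigl(m_{n_{k+1}}(\gamma)-m_{n_k}(\gamma)\bigr),\qquad\gamma\in\mathbb{T}.$$
By Theorem~\ref{2.4}, statement~(a) is equivalent to the existence of a constant $C$ with $\|S_{M,c}f\|_2\le C\|f\|_2$ for every $f\in L^2(X,m)$, every $M$, and every admissible $(c_k)$; since for each fixed pair $(M,c)$ this says exactly that $\|S_{M,c}\|_{2\to2}\le C$, statement~(a) is equivalent to
$$\sup_{M\ge1,\ |c_k|\le1}\bigl\|S_{M,c}\bigr\|_{L^2(X,m)\to L^2(X,m)}<\infty.$$

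The crux is to show that, for the given ergodic $\tau$ on the non-atomic space $(X,m)$,
$$\bigl\|S_{M,c}\bigr\|_{L^2(X,m)\to L^2(X,m)}=\sup_{|\gamma|=1}\bigl|P_{M,c}(\gamma)\bigr|.$$
For $\le$: since $M_n=\tfrac1n\sum_{j=1}^n U_\tau^{\,j}$ is a polynomial in the isometry $U_\tau f=f\circ\tau$, so is $S_{M,c}=P_{M,c}(U_\tau)$; the spectrum of $U_\tau$ lies in $\{|z|\le1\}$ and $P_{M,c}$ has no constant term, so the functional calculus together with the maximum modulus principle gives $\|S_{M,c}\|_{2\to2}\le\sup_{|\gamma|=1}|P_{M,c}(\gamma)|$ (alternatively one may invoke the Calderón transfer principle from $L^2(X,m)$ to $\ell^2(\mathbb{Z})$). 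For $\ge$: apply the Conze principle, exactly as in the proof of Theorem~\ref{2.13}, to get $\|S_{M,c}^\tau\|_{2\to2}=\sup_{\sigma\text{ m.p.}}\|S_{M,c}^\sigma\|_{2\to2}$, and test the right side against the rotations $\sigma_\gamma\colon\alpha\mapsto\gamma\alpha$ of $\mathbb{T}$ with $f(\alpha)=\alpha$; here $M_n^{\sigma_\gamma}f=m_n(\gamma)f$, hence $S_{M,c}^{\sigma_\gamma}f=P_{M,c}(\gamma)f$ with $\|f\|_2=1$, which yields $\sup_\sigma\|S_{M,c}^\sigma\|_{2\to2}\ge\sup_{|\gamma|=1}|P_{M,c}(\gamma)|$.

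To conclude, I would interchange the two suprema and apply $\ell^1$--$\ell^\infty$ duality to the finite sums: for each fixed $\gamma$,
$$\sup_{M\ge1,\ |c_k|\le1}\Bigl|\sum_{k=1}^M c_k\bigl(m_{n_{k+1}}(\gamma)-m_{n_k}(\gamma)\bigr)\Bigr|=\sum_{k=1}^\infty\bigl|m_{n_{k+1}}(\gamma)-m_{n_k}(\gamma)\bigr|,$$
so that
$$\sup_{M\ge1,\ |c_k|\le1}\bigl\|S_{M,c}\bigr\|_{2\to2}=\sup_{|\gamma|=1}\sum_{k=1}^\infty\bigl|m_{n_{k+1}}(\gamma)-m_{n_k}(\gamma)\bigr|,$$
and comparing with the first paragraph gives (a)$\iff$(b).

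I expect the main obstacle to be the lower bound $\|S_{M,c}^\tau\|_{2\to2}\ge\sup_{|\gamma|=1}|P_{M,c}(\gamma)|$: the spectrum of $U_\tau$ on $L^2(X,m)$ need not literally be all of $\mathbb{T}$, so this cannot be read off the spectral theorem, and it is precisely here that the ergodicity of $\tau$ and the non-atomicity of $(X,m)$ enter, through the Rokhlin-tower construction behind the Conze principle. One should also note that this argument is applied with $(M,c)$ fixed, so $P_{M,c}$ is a single fixed trigonometric polynomial, and that all the operators involved annihilate constants, which is what legitimizes using the mean-zero test function $f(\alpha)=\alpha$ on $\mathbb{T}$.
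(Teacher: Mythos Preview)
Your argument is correct and follows essentially the same route as the paper: Theorem~\ref{2.4} reduces (a) to a uniform operator-norm bound, the Conze principle plus testing against circle rotations with $f(\alpha)=\alpha$ gives the lower bound $\|S_{M,c}\|_{2\to2}\ge\sup_{|\gamma|=1}|P_{M,c}(\gamma)|$, and the choice of unimodular $c_k$ converts this into the $\ell^1$ sum in (b). The one genuine difference is in the direction (b)$\Rightarrow$(a): the paper first establishes the bound $\|S_{M,c}f\|_2\le C\|f\|_2$ in an irrational rotation system by a direct Fourier computation and then invokes the Conze/weak-approximation argument a second time to transfer it to the given $\tau$, whereas you bypass the model system entirely and get the upper bound $\|S_{M,c}\|_{2\to2}\le\sup_{|\gamma|=1}|P_{M,c}(\gamma)|$ from the spectral/von~Neumann inequality for the isometry $U_\tau$. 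Your route is a bit more direct here and avoids a second appeal to Conze; the paper's route has the advantage of staying entirely within the transference framework and not needing any operator-theoretic input. (Incidentally, the ``no constant term'' remark is harmless but unnecessary: the maximum modulus principle already gives $\sup_{|z|\le1}|P_{M,c}(z)|=\sup_{|\gamma|=1}|P_{M,c}(\gamma)|$ for any polynomial.)
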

\begin{proof}
Suppose that (a) holds. Then applying Theorem~\ref{2.4}, we have some constant $C$ such that for all $M \geq 1,\left(c_{k}\right),\left|c_{k}\right| \leq 1$, and $f \in L^2(X, m)$,

$$
\left\|\sum\limits_{k=1}^{M} c_{k}\left(M_{n_{k+1}} f-M_{n_{k}} f\right)\right\|_{2} \leq C\|f\|_{2}
$$

This homogeneous bound is amenable to the Conze argument. That is, we have here used the averages $M_{n} f=M_{n}^{\tau} f=\frac{1}{n} \sum\limits_{k=1}^{n} f \circ \tau^{k}$. If we take an arbitrary measure-preserving transformation $\sigma$ and replace $f$ by $f \circ \sigma$, and then change integration by $\sigma$ too, we would then have the same inequality but with $M_{n} f$ replaced by $M_{n}^{\sigma \tau \sigma^{-1}} f=\frac{1}{n} \sum\limits_{k=1}^{n} f \circ\left(\sigma \tau \sigma^{-1}\right)^{k}$

throughout. That is, we would have with the same constant $C$, for all $M \geq 1,\left(c_{k}\right),\left|c_{k}\right| \leq 1$, and $f \in L^2(X, m)$,

$$
\left\|\sum\limits_{k=1}^{M} c_{k}\left(M_{n_{k+1}}^{\sigma \tau \sigma-1} f-M_{n_{k}}^{\sigma \tau \sigma^{-1}} f\right)\right\|_{2} \leq C\|f\|_{2}
$$

So, via the weak approximation of the general measure-preserving transformation $\tau_{0}$ by conjugates of the form $\sigma \tau \sigma^{-1}$, we have a constant $C$ such that for all measure-preserving transformations $\tau_{0}$, for all $M \geq 1,\left(c_{k}\right),\left|c_{k}\right| \leq 1$, and $f \in L^2(X, m)$,

$$
\left\|\sum\limits_{k=1}^{M} c_{k}\left(M_{n_{k+1}}^{\tau_{0}} f-M_{n_{k}}^{\tau_{0}} f\right)\right\|_{2} \leq C\|f\|_{2}
$$

Now let $\tau_{0}: \mathbb{T} \rightarrow \mathbb{T}$ by $\tau_{0}(\alpha)=\gamma \alpha,|\alpha|=1$, for some fixed $\gamma,|\gamma|=1$. Let $f(\alpha)=\alpha$. Then $M_{n}^{\tau_{0}} f(\alpha)=m_{n}(\gamma) \alpha$. So

$$
\sum\limits_{k=1}^{M} c_{k}\left(M_{n_{k+1}}^{\tau_{0}} f-M_{n_{k}}^{\tau_{0}} f\right)(\alpha)=\sum\limits_{k=1}^{M} c_{k}\left(m_{n_{k+1}}(\gamma)-m_{n_{k}}(\gamma)\right) \alpha
$$

Hence,

$$
\left\|\sum\limits_{k=1}^{M} c_{k}\left(M_{n_{k+1}}^{\tau_{0}} f-M_{n_{k}}^{\tau_{0}} f\right)\right\|_{2}=\left|\sum\limits_{k=1}^{M} c_{k}\left(m_{n_{k+1}}(\gamma)-m_{n_{k}}(\gamma)\right)\right|
$$

Thus, (a) gives a uniform bound for all $\gamma,|\gamma|=1$,

$$
\sup _{M \geq 1} \sup _{\left|c_{k}\right| \leq 1}\left|\sum\limits_{k=1}^{M} c_{k}\left(m_{n_{k+1}}(\gamma)-m_{n_{k}}(\gamma)\right)\right| \leq C
$$

But then, choosing $\left(c_{k}\right),\left|c_{k}\right|=1$, such that

$$
c_{k}\left(m_{n_{k+1}}(\gamma)-m_{n_{k}}(\gamma)\right)=\mid\left(m_{n_{k+1}}(\gamma)-m_{n_{k}}(\gamma) \mid\right.
$$

we would conclude that for all $\gamma,|\gamma|=1$,

$$
\sum\limits_{k=1}^{\infty}\left|m_{n_{k+1}}(\gamma)-m_{n_{k}}(\gamma)\right| \leq C
$$

Conversely, assume the bound in (b). Then consider the dynamical system $\tau_{0}: \mathbb{T} \rightarrow \mathbb{T}$, given by $\tau_{0}(\alpha)=\gamma \alpha$, where $\gamma$ is of $\infty$ order (so that $\tau_{0}$ is ergodic). We claim that if

$$
C=\sup _{|\gamma|=1} \sum\limits_{k=1}^{\infty}\left|m_{n_{k+1}}(\gamma)-m_{n_{k}}(\gamma)\right|
$$

then in the dynamical system $\left(\mathbb{T}, \tau_{0}\right)$, for all $M \geq 1$, and all $\left(c_{k}\right),\left|c_{k}\right| \leq 1$, we have for any $f \in L^2(T)$,

$$
\left\|\sum\limits_{k=1}^{M} c_{k}\left(M_{n_{k+1}}^{\tau_{0}} f-M_{n_{k}}^{\tau_{0}} f\right)\right\|_{2} \leq C\|f\|_{2}
$$

If so, via weak approximation as in the Conze method, we would have the same inequality for all $\tau$, which is the meaning of (a).

But indeed, here for fixed $\left(c_{k}\right)$ and $M$, denoting the Fourier transform of $F \in L^2(\mathbb{T})$, by $F T(F)$ or $\widehat{F}$,

$$
\left\|\sum\limits_{k=1}^{M} c_{k}\left(M_{n_{k+1}}^{\tau_{0}} f-M_{n_{k}}^{\tau_{0}} f\right)\right\|_{2}^{2}=\left\|F T\left(\sum\limits_{k=1}^{M} c_{k}\left(M_{n_{k+1}}^{\tau_{0}} f-M_{n_{k}}^{\tau_{0}} f\right)\right)\right\|_{l_{2}(\mathbb{Z})}^{2} .
$$

But here $\widehat{M_{n}^{\tau_{0}} f}(z)=m_{n}\left(\gamma^{z}\right) \widehat{f}(z)$. So we have

$$
\begin{aligned}
\left\|\sum\limits_{k=1}^{M} c_{k}\left(M_{n_{k+1}}^{\tau_{0}} f-M_{n_{k}}^{\tau_{0}} f\right)\right\|_{2}^{2} & =\sum\limits_{z=-\infty}^{\infty}\left|F T\left(\sum\limits_{k=1}^{M} c_{k}\left(M_{n_{k+1}}^{\tau_{0}} f-M_{n_{k}}^{\tau_{0}} f\right)\right)(z)\right|^{2} \\
& =\sum\limits_{z=-\infty}^{\infty}\left|\sum\limits_{k=1}^{M} c_{k}\left(m_{n_{k+1}}\left(\gamma^{z}\right)-m_{n_{k}}\left(\gamma^{z}\right)\right)\right|^{2}|\widehat{f}(z)|^{2} \\
& \leq \sum\limits_{z=-\infty}^{\infty}\left(\sum\limits_{k=1}^{M}\left|m_{n_{k+1}}\left(\gamma^{z}\right)-m_{n_{k}}\left(\gamma^{z}\right)\right|\right)^{2}|\widehat{f}(z)|^{2} \\
& \leq C^{2} \sum\limits_{z=-\infty}^{\infty}|\widehat{f}(z)|^{2} \\
& =C^{2}\|f\|_{2}^{2} .
\end{aligned}
$$
\end{proof}

We see then that the uniform boundedness of the series of the absolute values of the spectral differences is the essential criterion needed to determine unconditional convergence in $L^2$ of the associated differences in an arbitrary dynamical system. Here are two examples of the use of this criterion.
\medskip

\begin{exmp}~\label{3.2} Let $n_{k}=k$ for all values of $k$. We claim that Theorem~\ref{3.1}, (b) fails to hold. Indeed, in this case

$$
\left|m_{k+1}(\gamma)-m_{k}(\gamma)\right|=\frac{1}{k+1}\left|\gamma^{k+1}-m_{k}(\gamma)\right|
$$

Fix $\gamma \neq 1$. Then for large $k,\left|m_{k}(\gamma)\right| \leq \frac{1}{2}$. Hence,

$$
\frac{1}{k+1}\left|\gamma^{k+1}-m_{k}(\gamma)\right| \geq \frac{1}{2(k+1)}
$$

for sufficiently large values of $k$. Thus, for all $\gamma \neq 1, \sum\limits_{k=1}^{\infty}\left|m_{k+1}(\gamma)-m_{k}(\gamma)\right|=\infty$
\qed
\end{exmp}
\medskip

Thus, $\sum\limits_{k=1}^{\infty} M_{k+1} f-M_{k} f$ is never unconditionally convergent, for all $f \in L^2$, if $\tau$ is ergodic. Actually, since the argument in Theorem~\ref{2.4} uses the Baire Category Theorem, it is actually the case that for $\tau$ ergodic, this series is not unconditional convergent for a second category set of functions $f$. 

\begin{rem}
A similar conclusion can be made in the case that the sequence $\left(n_{k}\right)$ is a polynomial function of $k$, like $n_{k}=k^{N}$ for all $k$. However, it is not generally as clear whether there exists $\gamma$ for which the series in Theorem~\ref{3.1} (b) is infinite, but it is the case that there is no bound for the series which is independent of $\gamma$.  See the follow Section~\ref{PolyEGs} for the details.
\qed
\end{rem}
\medskip

\begin{exmp}~\label{3.3} Let $n_{k}=2^{k}$ for all values of $k$. We claim that Theorem~\ref{3.1} (b) holds. To see this we use the two estimates

(a) $\left|m_{n}(\gamma)\right| \leq \frac{2}{n|\gamma-1|}$,

(b) $\quad\left|m_{n}(\gamma)-1\right| \leq n|\gamma-1|$.

So

$$
\sum\limits_{k=1}^{\infty}\left|m_{n_{k+1}}(\gamma)-m_{n_{k}}(\gamma)\right| \leq \sum\limits_{k=1}^{L}\left|m_{n_{k+1}}(\gamma)-m_{n_{k}}(\gamma)\right|+\sum\limits_{k=L+1}^{\infty}\left|m_{n_{k+1}}(\gamma)-m_{n_{k}}(\gamma)\right|
$$

But

$$
\begin{aligned}
\sum\limits_{k=1}^{L}\left|m_{n_{k+1}}(\gamma)-m_{n_{k}}(\gamma)\right| & \leq \sum\limits_{k=1}^{L}\left|m_{n_{k+1}}(\gamma)-1\right|+\sum\limits_{k=1}^{L}\left|m_{n_{k}}(\gamma)-1\right| \\
& \leq 2 \sum\limits_{k=1}^{L} n_{k+1}|\gamma-1|
\end{aligned}
$$

Also,

$$
\sum\limits_{k=L+1}^{\infty}\left|m_{n_{k+1}}(\gamma)-m_{n_{k}}(\gamma)\right| \leq \frac{4}{|\gamma-1|} \sum\limits_{k=L+1}^{\infty} \frac{1}{n_{k}}
$$

Fix $\gamma \neq 1$, and take $L$ which is the last value of $k$ with $n_{k+1}|\gamma-1| \leq 1$. If there are no such $k$, then let $L=0$. Then

$$
\sum\limits_{k=1}^{L} n_{k+1}|\gamma-1|=|\gamma-1| \sum\limits_{k=1}^{L} 2^{k+1} \leq|\gamma-1| 2^{L+2}
$$

But $2^{L+1}|\gamma-1| \leq 1$. So $\sum\limits_{k=1}^{L} n_{k+1}|\gamma-1| \leq 2$. Hence,

$$
\sum\limits_{k=1}^{L}\left|m_{n_{k+1}}(\gamma)-m_{n_{k}}(\gamma)\right| \leq 4
$$

Also,

$$
\sum\limits_{k=L+1}^{\infty}\left|m_{n_{k+1}}(\gamma)-m_{n_{k}}(\gamma)\right| \leq \frac{4}{|\gamma-1|} \sum\limits_{k=L+1}^{\infty} \frac{1}{2^{k}}=\frac{4}{|\gamma-1|} \frac{1}{2^{L}}
$$

But $|\gamma-1| 2^{L+2}>1$. So $|\gamma-1| 2^{L}>\frac{1}{4}$ and

$$
\sum\limits_{k=L+1}^{\infty}\left|m_{n_{k+1}}(\gamma)-m_{n_{k}}(\gamma)\right| \leq 16
$$

Hence, $\sum\limits_{k=1}^{\infty}\left|m_{n_{k+1}}(\gamma)-m_{n_{k}}(\gamma)\right| \leq 20$.

Thus, for any dynamical system, $\sum\limits_{k=1}^{\infty} M_{2^{k+1}} f-M_{2^{k}} f$ is unconditionally convergent for all $f \in L^2$. Moreover, a very similar argument will show that as long as $\left(n_{k}\right)$ is lacunary, then $\sum\limits_{k=1}^{\infty}\left|m_{n_{k+1}}(\gamma)-m_{n_{k}}(\gamma)\right|$ is bounded for all $\gamma$ by a constant depending only on the lacunarity of the sequence $\left(n_{k}\right)$. So if $\left(n_{k}\right)$ is lacunary, then for any dynamical system, $\sum\limits_{k=1}^{\infty} M_{n_{k+1}} f-M_{n_{k}} f$ is unconditionally convergent for all $f \in L^2$.
\qed
\end{exmp}
\medskip

\subsection{More Examples with Some Details}\label{PolyEGs}

We consider more examples using some modifications of the the details that appear in Argiris~\cite{Argiris}.  In Argiris~\cite{Argiris} there are a number of results that are trying to get more definitively a characterization of the basic bound in Theorem~\ref{3.1}.

We need to examine the series

$$
\mathcal A = \sum_{k=1}^{\infty}\left |m_{n_{k+1}}(e^{i \theta})-m_{n_{k}}(e^{i \theta})\right  | 
$$
$$ =  \sum\limits_{k=1}^\infty \left |\frac 1{n_{k+1}}\left (\frac {e^{in_{k+1}\theta} - 1}{e^{i\theta}-1}\right ) -\frac 1{n_k}\left (\frac {e^{in_k \theta} -1 }{e^{i\theta} -1}\right )\right |. $$
We will be considering the bound for this series when $\sum\limits_{k=1}^\infty \frac 1{n_k} < \infty$.  In this case there is a uniform bound on 
\[\sup\limits_{0 < |\theta| < 1} \sum\limits_{k=1}^\infty \frac 1{n_k}
\left |\frac {e^{in_k\theta} - 1}{e^{i\theta} -1} - \frac {e^{in_k\theta} - 1}{\theta}\right | 
< \infty.\] 
Hence, when dealing with a uniform bound on $\mathcal A$, we may instead consider uniform bounds on 
\[\mathcal B = \sum\limits_{k=1}^\infty \left |\left (\frac {e^{in_{k+1}\theta} - 1}{n_{k+1}\theta}\right ) -\left (\frac {e^{in_k \theta} -1 }{n_k\theta}\right )\right |.\]
For $0 < |\theta|<1 / 2$, $\mathcal B
=\sum\limits_{k=1}^{\infty}\left|\int_{n_{k} \theta}^{n_{k+1} \theta} F^{\prime}(t) d t\right|$
where $F(t)=\frac{e^{i t}-1}{t}$. Let $F(t)=A(t)+i B(t)$. Then

\begin{align*}
F^{\prime}(t)&=A^{\prime}(t)+i B^{\prime}(t)\cr 
&= \frac{i t e^{i t}-\left(e^{i t}-1\right)}{t^{2}}\cr
&=\frac{1}{t^{2}}[i t(\cos t+i \sin t)-\cos t-i \sin t+1]\cr
&= \frac{1}{t^{2}}[i(t \cos t-\sin t)-(\cos t+t \sin t)+1]
\end{align*}

For large $t$, that is when $n_{k} \theta$ is large, we have

$$
\left|A^{\prime}(t)\right| \sim \frac{|\sin t|}{t}, \quad\left|B^{\prime}(t)\right| \sim \frac{|\cos t|}{t} .
$$

So, by the Mean Value Theorem, for some $t_1,t_2$ with 
$n_{k} \theta<t_{1}, t_{2}<n_{k+1} \theta$, for some constant $C$,

\begin{align*}
\mathcal B &= \sum_{k=1}^{\infty}\left|\int_{n_{k} \theta}^{n_{k+1} \theta} A^{\prime}(t) d t+i \int_{n_{k} \theta}^{n_{k+1} \theta} B^{\prime}(t) d t\right|  \cr
& \geq C\sum\limits_{k=1}^\infty \left|\left(n_{k+1}-n_{k}\right) \theta\right| \max \left(\frac{\left|\sin t_{1}\right|}{t_{1}}, \frac{\left|\cos t_{2}\right|}{t_{2}}\right).
\end{align*}

\noindent  Suppose $\left|t_{1}-t_{2}\right|$ is small, that is when $\left(n_{k+1}-n_{k}\right) \theta$ is small (for example, with $\left(n_{k+1}-n_{k}\right) \theta<1 / 10$).  Because at least one of 
$\cos t, \sin t$ is bounded away from $0$, again for some constant $C$, 
$$
\left|\left(n_{k+1}-n_{k}\right) \theta\right| \max \left(\frac{\left|\sin t_{1}\right|}{t_{1}}, \frac{\left|\cos t_{2}\right|}{t_{2}}\right) \ge
C \frac{n_{k+1}-n_{k}}{n_{k+1}} .
$$

\noindent Since $\theta$ can be arbitrarily small, for the sake of simplicity we will write the last condition as $\left(n_{k+1}-n_{k}\right) \theta<1$.

So we have proved

\begin{prop}\label{ArgirisPoly} When $\sum\limits_{k=1}^\infty \frac 1{n_k}$, we do not have a uniform bound on $\mathcal A$, if

$$
\sup _{1 / 2>\theta>0} \sum_{\substack{n_{k} \theta>100,\left(n_{k+1}-n_{k}\right) \theta<1}} \frac{n_{k+1}-n_{k}}{n_{k+1}}=\infty .
$$
\end{prop}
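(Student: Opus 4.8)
The plan is to make rigorous the chain of estimates sketched just before the statement, turning it into a genuine lower bound for $\mathcal{B}$, and hence for $\mathcal{A}$, along a well-chosen family of $\theta$'s. First I would record the reduction already observed: since $\sum_{k=1}^\infty 1/n_k<\infty$, the quantity $\sup_{0<|\theta|<1}\sum_{k=1}^\infty \frac{1}{n_k}\left|\frac{e^{in_k\theta}-1}{e^{i\theta}-1}-\frac{e^{in_k\theta}-1}{\theta}\right|$ is finite, so by the triangle inequality $|\mathcal{A}(\theta)-\mathcal{B}(\theta)|$ is bounded by a constant independent of $\theta$ (the harmless extra factor $e^{i\theta}$ relating $m_{n_k}(e^{i\theta})$ to $(e^{in_k\theta}-1)/(n_k\theta)$ is absorbed here too). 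Thus it suffices to show $\sup_{0<\theta<1/2}\mathcal{B}(\theta)=\infty$ under the stated hypothesis.

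Next, for $0<\theta<1/2$ I would write $\mathcal{B}(\theta)=\sum_k|F(n_{k+1}\theta)-F(n_k\theta)|$ with $F(t)=(e^{it}-1)/t=A(t)+iB(t)$, so the $k$th summand equals the modulus of $\int_{n_k\theta}^{n_{k+1}\theta}F'(t)\,dt$. Discarding the zero terms with $n_k=n_{k+1}$ and applying the Mean Value Theorem separately to $A$ and to $B$ on $[n_k\theta,n_{k+1}\theta]$ (where $F$ is smooth once $n_k\theta>0$), there are $t_1,t_2$ in that interval with the $k$th summand equal to $(n_{k+1}-n_k)\theta\,|A'(t_1)+iB'(t_2)|\ge(n_{k+1}-n_k)\theta\max(|A'(t_1)|,|B'(t_2)|)$; note $|t_1-t_2|<(n_{k+1}-n_k)\theta$.

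From the displayed formula for $F'$ one reads off $A'(t)=\frac{1-\cos t}{t^2}-\frac{\sin t}{t}$ and $B'(t)=\frac{\cos t}{t}-\frac{\sin t}{t^2}$, hence $|A'(t)|\ge\frac{|\sin t|}{t}-\frac{2}{t^2}$ and $|B'(t)|\ge\frac{|\cos t|}{t}-\frac{1}{t^2}$. Restricting to those $k$ with $n_k\theta>100$ forces $t_1,t_2>100$, so the $t^{-2}$ corrections are absorbed into a $\tfrac{1}{50}$-multiple of $t^{-1}$; and since $t_1,t_2\le n_{k+1}\theta$ this gives $\max(|A'(t_1)|,|B'(t_2)|)\ge\frac{1}{n_{k+1}\theta}\bigl(\max(|\sin t_1|,|\cos t_2|)-\tfrac{1}{50}\bigr)$. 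The remaining point is to bound $\max(|\sin t_1|,|\cos t_2|)$ below by an absolute constant, and this is where the second restriction $(n_{k+1}-n_k)\theta<1$ enters: it gives $|t_1-t_2|<1$. If $|\sin t_1|\ge 1/4$ we are done; otherwise $\mathrm{dist}(t_1,\pi\mathbb{Z})<\arcsin(1/4)<0.26$, hence $\mathrm{dist}(t_2,\pi\mathbb{Z})<1.26<\pi/2$, which yields $|\cos t_2|\ge\cos(1.26)>1/4$. Either way $\max(|\sin t_1|,|\cos t_2|)-\tfrac{1}{50}\ge c_0$ for an explicit $c_0>0$ (one may take $c_0=1/5$), so the $k$th summand is at least $c_0\,\frac{n_{k+1}-n_k}{n_{k+1}}$.

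Summing over the indices $k$ with $n_k\theta>100$ and $(n_{k+1}-n_k)\theta<1$ gives $\mathcal{B}(\theta)\ge c_0\sum_{n_k\theta>100,\,(n_{k+1}-n_k)\theta<1}\frac{n_{k+1}-n_k}{n_{k+1}}$; taking the supremum over $0<\theta<1/2$ and invoking the hypothesis shows $\sup_\theta\mathcal{B}(\theta)=\infty$, hence $\sup_\theta\mathcal{A}(\theta)=\infty$, which is exactly the failure of a uniform bound on $\mathcal{A}$. The step I expect to be the main obstacle is the absolute-constant bookkeeping in the third paragraph: the numerical thresholds $100$ and $1$ have to be chosen so that the $t^{-2}$ error terms in $A'$ and $B'$, the estimate $\mathrm{dist}(t_2,\pi\mathbb{Z})<1.26$, and the inequality $1.26<\pi/2$ all hold simultaneously and still leave a strictly positive $c_0$; everything else is routine.
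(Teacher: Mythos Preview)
Your proposal is correct and follows essentially the same route as the paper: reduce $\mathcal A$ to $\mathcal B$ via $\sum 1/n_k<\infty$, apply the Mean Value Theorem to the real and imaginary parts of $F$ on $[n_k\theta,n_{k+1}\theta]$, and use the smallness of $|t_1-t_2|<(n_{k+1}-n_k)\theta$ to force $\max(|\sin t_1|,|\cos t_2|)$ to be bounded below by an absolute constant. Your version is in fact tighter than the paper's sketch, since you make explicit the trigonometric case split (if $|\sin t_1|<1/4$ then $\mathrm{dist}(t_2,\pi\mathbb Z)<1.26<\pi/2$ gives $|\cos t_2|>\cos(1.26)>1/4$) where the paper only says ``at least one of $\cos t,\sin t$ is bounded away from $0$'' and leaves the $t_1\neq t_2$ issue implicit.
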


\begin{exmp}\label{PolyBad} Suppose $n_{k}=k^{p}$ for some $p>1$. Then $n_{k+1}-n_{k} \approx k^{p-1}$. So essentially the supremum in Proposition~\ref{ArgirisPoly} is over the set $\left\{k: k^{p-1} \theta<1, k^{p} \theta>100\right\} \approx\left\{k: k<\left(\frac{1}{\theta}\right)^{\frac{1}{p-1}}, k>\left(\frac{100}{\theta}\right)^{\frac{1}{p}}\right\}$.  But then the sum in this supremum is as large as 

$$
 \sum_{\left(\frac{100}{\theta}\right)^{1/p}<k<\left(\frac{1}{\theta}\right)^{\frac{1}{p-1}}} \frac{1}{k} \ge C \log \frac{\left(\frac{1}{\theta}\right)^{\frac{1}{p-1}}}{\left(\frac{100}{\theta}\right)^{\frac{1}{p}}} \ge C \frac{p}{p-1} \log \frac{1}{\theta} \rightarrow \infty
\quad\text {as}\quad \theta \to 0.$$

\noindent This example shows that for index sequences of the form $n_{k}=k^{p}, p>1$ we do not have unconditional convergence for the ergodic differences.
\end{exmp}

\begin{exmp}\label{TermsinLac} Suppose that the subsequence $\left(n_{k}\right)$ has $\phi(k)$ equally spaced terms between $2^{k}$ and $2^{k+1}$, that is, when $n_{j} \in\left[2^{k}, 2^{k+1}\right)$, we have $n_{j+1}-n_{j} \approx 2^{k} / \phi(k)$. Assume $\phi(k) \to \infty$ as $k\to \infty$.  We need to sum over the set $\left\{\left(n_{j+1}-n_{j}\right) \theta<1, n_{j} \theta>100\right\}$. Fix $k_{0}$ and let $\frac{1}{\theta}=2^{k_{0}}$. We are looking for $n_{j}>2^{k_{0}}$ such that $\left(n_{j+1}-n_{j}\right) \theta<1$, which is the same as $n_{j+1}-n_{j}<2^{k_{0}}$. For a fixed constant $C$, we can choose $k_{1}$ be such that $2^{k_{1}-k_{0}} \ge C \phi\left(k_{1}\right)$. Then our conditions are satisfied for all $n_{j}<2^{k_{1}}$ and

$$
\sum_{n_{j} \in[2^{k_{0}}, 2^{k_{1}})} \frac{n_{j+1}-n_{j}}{n_{j+1}} \ge C \sum_{k=k_{0}}^{k_{1}} \phi(k) \frac{2^{k} / \phi(k)}{2^{k}} \ge C (k_{1}-k_{0}) \ge C \log \left(\phi\left(k_{1}\right)\right) \rightarrow \infty.
$$

\noindent It follows from Theorem~\ref{ArgirisPoly} that when we have an index sequence $\left(n_{k}\right)$ such that it contains $\phi(k)$ equally spaced terms between $2^{k}$ and $2^{k+1}$, with $\phi(k) \rightarrow \infty$, we do not have unconditional convergence for the ergodic differences.
\end{exmp}

\section{Sequences of Probability Measures}\label{4}

It is clear from the argument in Theorem~\ref{3.1}, that the averages $M_{n}$ can be replaced by a more general sequence of averages. In particular, suppose that $\mu$ is a probability measure on $\mathbb{Z}$ and let $\mu f$ denote the usual average, given a fixed dynamical system $(X, m, \tau)$. That

is, $\mu f(x)=\sum\limits_{s=-\infty}^{\infty} \mu(s) f\left(\tau^{s}(x)\right)$. We also let $\widehat{\mu}$ denote the usual Fourier transform of $\mu$. Then just as the above is proved, one can show that

\begin{thm}\label{4.1} Let $\tau$ be any ergodic transformation of a non-atomic probability space $(X, m)$. Given a sequence of probability measures $\left(\mu_{k}\right)$, the following are equivalent:

(a) $\quad \sum\limits_{k=1}^{\infty} \mu_{k+1} f-\mu_{k} f$ is unconditionally convergent for all $f$ in $L^2(X, m)$,

b) $\quad \sup _{|\gamma|=1} \sum\limits_{k=1}^{\infty}\left|\widehat{\mu}_{k+1}(\gamma)-\widehat{\mu}_{k}(\gamma)\right|<\infty$.
\end{thm}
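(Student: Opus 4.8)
The plan is to imitate the proof of Theorem~\ref{3.1} almost verbatim, the only genuinely new point being to check that replacing $M_n$ by a general probability measure $\mu_k$ on $\mathbb{Z}$ does not disturb any of the functional-analytic or Fourier-analytic mechanics. First I would record that, since each $\mu_k$ is a probability measure, the operator $f\mapsto \mu_k f=\sum_s \mu_k(s)\,f\circ\tau^s$ is a convex combination of the unitaries $U_\tau^s$, hence a contraction on $L^2(X,m)$; in particular $\sum_{k=1}^{M}c_k(\mu_{k+1}f-\mu_k f)$ is a well-defined element of $L^2$ for every $f$, every $M$, and every $(c_k)$ with $|c_k|\le 1$. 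Consequently Theorem~\ref{2.4} applies unchanged: statement (a) is equivalent to the existence of a constant $C$ with $\bigl\|\sum_{k=1}^{M}c_k(\mu_{k+1}f-\mu_k f)\bigr\|_2\le C\|f\|_2$ for all $M\ge 1$, all $(c_k)$ with $|c_k|\le1$, and all $f\in L^2(X,m)$.

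For (a) $\Rightarrow$ (b) I would feed this homogeneous inequality into the Conze argument exactly as in Theorem~\ref{3.1}: replace $f$ by $f\circ\sigma$ and change variables by the measure-preserving $\sigma$ to get the same bound with $\tau$ replaced by $\sigma\tau\sigma^{-1}$, then invoke weak approximation of an arbitrary measure-preserving $\tau_0$ by such conjugates to obtain the bound for every $\tau_0$ with the same $C$. Specializing to $\tau_0\colon\mathbb{T}\to\mathbb{T}$, $\tau_0(\alpha)=\gamma\alpha$, and $f(\alpha)=\alpha$, one has $\mu_k^{\tau_0}f(\alpha)=\widehat{\mu}_k(\gamma)\,\alpha$, so $\bigl\|\sum_{k=1}^{M}c_k(\mu_{k+1}^{\tau_0}f-\mu_k^{\tau_0}f)\bigr\|_2=\bigl|\sum_{k=1}^{M}c_k(\widehat{\mu}_{k+1}(\gamma)-\widehat{\mu}_k(\gamma))\bigr|$; choosing $|c_k|=1$ to align the phases of the summands yields $\sum_{k=1}^{\infty}|\widehat{\mu}_{k+1}(\gamma)-\widehat{\mu}_k(\gamma)|\le C$ for every $\gamma$ on the circle, which is (b).

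For (b) $\Rightarrow$ (a) I would set $C=\sup_{|\gamma|=1}\sum_{k=1}^{\infty}|\widehat{\mu}_{k+1}(\gamma)-\widehat{\mu}_k(\gamma)|$ and work in the ergodic rotation system $(\mathbb{T},\tau_0)$ with $\tau_0(\alpha)=\gamma\alpha$ for $\gamma$ of infinite order. Passing to the Fourier side, $\widehat{\mu_k^{\tau_0}f}(z)=\widehat{\mu}_k(\gamma^z)\widehat{f}(z)$, so by Parseval
\[
\Bigl\|\sum_{k=1}^{M}c_k(\mu_{k+1}^{\tau_0}f-\mu_k^{\tau_0}f)\Bigr\|_2^2
=\sum_{z\in\mathbb{Z}}\Bigl|\sum_{k=1}^{M}c_k\bigl(\widehat{\mu}_{k+1}(\gamma^z)-\widehat{\mu}_k(\gamma^z)\bigr)\Bigr|^2|\widehat{f}(z)|^2
\le C^2\sum_{z\in\mathbb{Z}}|\widehat{f}(z)|^2=C^2\|f\|_2^2,
\]
where I used $|c_k|\le1$ and the triangle inequality inside the $z$-sum. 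The homogeneous bound for the rotation system then propagates to every measure-preserving $\tau$ by the same weak-approximation step, and Theorem~\ref{2.4} converts it back into (a).

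I do not expect a substantial new obstacle; the proof is a transcription of that of Theorem~\ref{3.1}. The only places warranting a line of care — the ``hard part'' such as it is — are (i) confirming that $\mu_k f$ is a bona fide bounded operator on $L^2$, which follows from $\mu_k$ being a probability measure, so that the Baire-category/uniform-boundedness machinery behind Theorem~\ref{2.4} is applicable, and (ii) recording the identity $\widehat{\mu_k^{\tau_0}f}(z)=\widehat{\mu}_k(\gamma^z)\widehat{f}(z)$ for the rotation $\tau_0$, which is immediate from $\widehat{U_{\tau_0}^s f}(z)=\gamma^{sz}\widehat{f}(z)$ together with dominated convergence to justify interchanging $\sum_s$ with the Fourier transform (here $\sum_s\mu_k(s)=1$ makes this routine).
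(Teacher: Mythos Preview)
Your proposal is correct and is exactly the approach the paper takes: the paper does not give an independent proof of Theorem~\ref{4.1} but simply remarks that it is proved ``just as the above'' (i.e., as Theorem~\ref{3.1}), and your write-up faithfully carries out that transcription, including the two points (boundedness of $\mu_k$ as an operator and the Fourier identity $\widehat{\mu_k^{\tau_0}f}(z)=\widehat{\mu}_k(\gamma^z)\widehat{f}(z)$) that need to be checked when $M_n$ is replaced by a general probability measure.
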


\noindent {\bf Remark}:  Even more generally, the differences $\mu_{k+1} f-\mu_{k} f=\left(\mu_{k+1}-\mu_{k}\right) f$ in Theorem~\ref{4.1} can be replaced by a fixed sequence of terms $\nu_{k} f$ where $\nu_{k}$ are finite signed measures on $\mathbb{Z}$.
\qed
\medskip

We are now about to present several interesting facts related to Theorem~\ref{4.1}.  But before we do, we should recall that if $\frac{|1-\hat{\mu}(\gamma)|}{1-|\hat{\mu}(\gamma)|} \leq C$, then the spectrum of $\mu$ is contained in some Stoltz region.

Furthermore, if $\frac{|\hat{\mu}(\gamma)-1|^{2}}{1-|\hat{\mu}(\gamma)|^{2}}$ is bounded by some number $M$, then it follows by couple lines of algebra that the spectrum of $\mu$ is contained in some circle $C_{\rho}$ with center at $(\rho, 0)$ and radius $1-\rho$ where $\rho=\frac{1}{M+1}$. It was first observed in the paper by Jones, Ostrovskii and Rosenblatt~\cite{JOR}  that ''strictly aperiodic'' measures all have a spectrum which is contained in some such horocycle.

\begin{lem}\label{4.2} If $\mu$ is a probability measure on $\mathbb{Z}$, and the spectrum of $\mu$ is contained in some Stolz region, then for all $\left(n_{k}\right)$ satisfying $n_{k} \leq n_{k+1}$, we have some constant $C$ such that

$$
\sup _{|\gamma|=1} \sum\limits_{k=1}^{\infty}\left|\widehat{\mu}^{n_{k+1}}(\gamma)-\widehat{\mu}^{n_{k}}(\gamma)\right| \leq C
$$
\end{lem}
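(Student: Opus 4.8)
The plan is to reduce the infinite sum of spectral differences to a single geometric-type bound using the Stolz region hypothesis, splitting the sum at the index where $n_k$ first makes $|\widehat\mu(\gamma)|^{n_k}$ small. Write $z = \widehat\mu(\gamma)$, so $|z|\le 1$ and, by the Stolz hypothesis, $|1-z| \le C_0(1-|z|)$ for a fixed constant $C_0$. The key telescoping identity is
\[
\widehat\mu^{n_{k+1}}(\gamma) - \widehat\mu^{n_k}(\gamma) = z^{n_k}\bigl(z^{n_{k+1}-n_k} - 1\bigr),
\]
and for each term I would use one of two complementary estimates: a ``small-exponent'' bound $|z^{m}-1| \le m|z-1| \le m|1-z|$ (valid since $|z|\le 1$, by factoring $z^m - 1 = (z-1)\sum_{j=0}^{m-1} z^j$), and a ``large-exponent'' bound $|z^{n_{k+1}} - z^{n_k}| \le |z|^{n_k} + |z|^{n_{k+1}} \le 2|z|^{n_k}$.

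First I would handle the trivial case $z = 1$ (all terms vanish) and then assume $z\ne 1$, so $|z| < 1$ by strict aperiodicity-type reasoning — actually we only need $|z|\le 1$ and, if $|z|=1$ with $z\ne1$, the Stolz bound $|1-z|\le C_0(1-|z|)=0$ forces a contradiction, so in fact $|z|<1$ whenever $z\ne 1$. Set $t = 1 - |z| \in (0,1]$, so $|z|^n \le e^{-nt}$. Choose $L = L(\gamma)$ to be the largest index $k$ with $n_{k+1}\, t \le 1$ (and $L=0$ if no such $k$ exists). For $k \le L$ I apply the small-exponent bound:
\[
\sum_{k=1}^{L}\bigl|z^{n_{k+1}} - z^{n_k}\bigr| \le \sum_{k=1}^L (n_{k+1}-n_k)\,|1-z| \le C_0 t \sum_{k=1}^L (n_{k+1}-n_k) = C_0 t\,(n_{L+1}-n_1) \le C_0 t\, n_{L+1} \le C_0,
\]
using the definition of $L$. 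For $k > L$ I apply the large-exponent bound, $|z^{n_{k+1}} - z^{n_k}| \le 2|z|^{n_k} \le 2 e^{-n_k t}$, and since $n_k > n_L \ge$ roughly $1/t$ beyond this point, the tail is dominated by a geometric-type series in $e^{-t}$; more carefully, because the $n_k$ are nondecreasing and $n_{L+1} t > 1$, one gets $n_k t \ge n_{L+1} t + (k - L - 1) \ge 1$, but this only gives a bounded-by-constant tail if the $n_k$ actually grow. Here is the subtlety: the hypothesis only assumes $n_k \le n_{k+1}$, not strict growth, so consecutive $n_k$ may repeat — but when $n_k = n_{k+1}$ the difference term is zero, so I may first pass to the subsequence of distinct values, on which $n_{k+1} \ge n_k + 1$, making $n_k t \ge n_{L+1} t + (k - L - 1) t$ genuinely increasing and the tail $\sum_{k>L} 2 e^{-n_k t}$ bounded by $2\sum_{j\ge 0} e^{-1 - jt}$, which I bound by $2 e^{-1}/(1 - e^{-t}) \le 2/t$... and this is $\emph{not}$ bounded as $t\to 0$.

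The main obstacle, therefore, is getting the tail bound to be uniform in $\gamma$ (equivalently in $t$): the naive geometric sum $\sum_{k>L} e^{-n_k t}$ degrades like $1/t$. The fix is to use the spacing of the $n_k$ more cleverly together with the choice of $L$: since $n_{L+1} t > 1$ but $n_L t \le 1$, and since past $L$ the values satisfy $n_k \ge n_{L+1}$, the correct grouping is to note $\sum_{k > L} e^{-n_k t} \le \sum_{k>L} e^{-n_{L+1} t} \cdot (\text{correction})$; but the clean route, mirroring Example~\ref{3.3}, is to observe that under the Stolz hypothesis one also has $|z^{n_{k+1}} - z^{n_k}| = |z|^{n_k}|z^{n_{k+1}-n_k}-1|$ and to $\emph{re-split}$: combine the small- and large-exponent estimates as $|z^{n_{k+1}}-z^{n_k}| \le \min\!\bigl(C_0 t (n_{k+1}-n_k),\, 2 e^{-n_k t}\bigr)$, then bound the whole sum $\sum_k \min(\cdots)$ by an integral $\int_0^\infty \min(C_0 t, 2 e^{-st})\,ds$-type quantity after comparing the discrete sum to a Riemann sum (legitimate because the $n_k$ are integers with gaps $\ge 1$ on the reduced subsequence), and this integral evaluates to a constant $\emph{independent of }t$. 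I expect verifying this last comparison-to-an-integral step — controlling $\sum_k (n_{k+1}-n_k)\, t\, \mathbbm 1_{n_k t \le 1} + \sum_k e^{-n_k t}\,\mathbbm 1_{n_k t > 1}$ uniformly — to be the real content of the proof; everything else (the telescoping identity, the two elementary estimates, reduction to distinct $n_k$, the $z=1$ degenerate case) is routine. An alternative that sidesteps the integral comparison is to invoke a known result from \cite{JOR} or a Stolz-region square-function estimate directly, but I would prefer the self-contained splitting argument above.
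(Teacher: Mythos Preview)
Your approach is not wrong in spirit, but you are working much harder than necessary, and the ``real content'' you identify (the integral-comparison step) is a genuine gap that you have not actually closed. The paper's proof sidesteps all of this with a single telescoping estimate.

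The key idea you are missing is this: instead of using the two crude bounds $|z^m - 1| \le m|1-z|$ and $|z^m - 1| \le 2$ and then struggling to splice them together, use the intermediate bound
\[
|z^{m}-1| = |1-z|\,\bigl|1+z+\cdots+z^{m-1}\bigr| \le |1-z|\cdot\frac{1-|z|^{m}}{1-|z|}.
\]
Applying this with $m = n_{k+1}-n_k$ and factoring $|z^{n_{k+1}}-z^{n_k}| = |z|^{n_k}|z^{n_{k+1}-n_k}-1|$ gives
\[
\sum_{k\ge 1}\bigl|z^{n_{k+1}}-z^{n_k}\bigr| \le \frac{|1-z|}{1-|z|}\sum_{k\ge 1}\bigl(|z|^{n_k}-|z|^{n_{k+1}}\bigr) \le C_0\,|z|^{n_1} \le C_0,
\]
since the last sum telescopes. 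That is the entire proof: no splitting index $L$, no tail estimate, no integral comparison, and no need to pass to distinct $n_k$ (repeated values contribute zero on both sides automatically).

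Your splitting strategy mirrors the argument in Example~\ref{3.3} for the Ces\`aro kernels $m_n(\gamma)$, where one genuinely needs two regimes because there is no analogue of the Stolz bound and $|m_n(\gamma)|$ is not simply $|z|^n$. Here the Stolz hypothesis is exactly what makes the telescoping work in one stroke; recognizing that is the point of the lemma.
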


The following proof is almost identical to the proof of Theorem 1.5 given in Jones, Ostrovskii and Rosenblatt~\cite{JOR} where it is proved that for some constant $C$,

$$
\sup _{|\gamma|=1} \sum\limits_{k=1}^{\infty}\left|\widehat{\mu}^{n_{k+1}}(\gamma)-\widehat{\mu}^{n_{k}}(\gamma)\right|^{2} \leq C
$$

\noindent This proof is repeated here for convenience:
\begin{proof}[Proof of Lemma~\ref{4.2}] 
We observe that

$$
\begin{aligned}
\sum\limits\left|z^{n_{k+1}}-z^{n_{k}}\right| & =\sum\limits|z|^{n_{k}}\left|z^{n_{k+1}-n_{k}}-1\right| \\
& =\sum\limits|z|^{n_{k}}|1-z|\left|1+z+z^{2}+\ldots+z^{n_{k+1}-n_{k}-1}\right| \\
& =|1-z| \sum\limits|z|^{n_{k}} \frac{1-|z|^{n_{k+1}-n_{k}}}{1-|z|} \\
& \leq C \sum\limits|z|^{n_{k}}-|z|^{n_{k+1}} \\
& =C|z|^{n_{1}}
\end{aligned}
$$

Substituting $z=\widehat{\mu}(\gamma)$ gives the desired result
\end{proof}

Now we need this additional lemma.

\begin{lem}~\label{4.3} If $\frac{|\hat{\mu}-1|^{2}}{|1-\hat{\mu}|} \leq C$ then $n_{k+1} \geq n_{k}{ }^{p}, p>1$ implies that

$$
\sup _{|\gamma|=1} \sum\limits_{k=1}^{\infty}\left|\widehat{\mu}^{n_{k+1}}(\gamma)-\widehat{\mu}^{n_{k}}(\gamma)\right|<\infty
$$
\end{lem}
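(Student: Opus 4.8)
The plan is to treat this as a purely complex-analytic estimate and then specialize. Writing $z=\widehat\mu(\gamma)$, the summand is $\big|z^{n_{k+1}}-z^{n_k}\big|$, and the strict-aperiodicity hypothesis on $\mu$ supplies a constant (call it $C$) with $1-|\widehat\mu(\gamma)|\ge |1-\widehat\mu(\gamma)|^2/C$ for every $\gamma$. So it suffices to produce a single constant $K=K(p,C)$ with $\sum_{k=1}^\infty\big|z^{n_{k+1}}-z^{n_k}\big|\le K$ for every $z\in\C$ satisfying $|z|\le 1$ and $1-|z|\ge |1-z|^2/C$; then take the supremum over $\gamma$. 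If $z=1$ the sum is $0$, so assume $z\neq 1$, and set $r=|z|$ and $s=|1-z|\in(0,2]$. Then $r\le 1-s^2/C\le e^{-s^2/C}$, hence $r^{n}\le e^{-ns^2/C}$. I will also use the two elementary bounds $\big|z^{n_{k+1}}-z^{n_k}\big|=r^{n_k}\big|z^{\,n_{k+1}-n_k}-1\big|\le r^{n_k}(n_{k+1}-n_k)s$ and $\big|z^{n_{k+1}}-z^{n_k}\big|\le 2r^{n_k}$.

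First I would dispose of the case $s\ge 1/2$: then $r\le 1-1/(4C)<1$, so $\sum_k\big|z^{n_{k+1}}-z^{n_k}\big|\le 2\sum_k r^{n_k}\le 2\sum_k(1-\tfrac1{4C})^{n_k}$, a fixed finite number since $n_{k+1}\ge n_k^p$ forces $n_k\to\infty$ (and once $n_k\ge 2$ the sequence is strictly increasing, so $n_k\ge k$ eventually; the finitely many initial terms each contribute at most $2$). So assume $s<1/2$, i.e.\ $1/s>2$. Let $L$ be the largest index with $n_{L+1}\le 1/s$, with $L=0$ if $n_2>1/s$. For the ``head'' $1\le k\le L$ use the first elementary bound and telescope: $\sum_{k=1}^{L}\big|z^{n_{k+1}}-z^{n_k}\big|\le s\sum_{k=1}^{L}(n_{k+1}-n_k)=s(n_{L+1}-n_1)\le s\cdot\tfrac1s=1$.

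For the ``tail'' $k\ge L+1$ I would use the second bound, $\sum_{k\ge L+1}\big|z^{n_{k+1}}-z^{n_k}\big|\le 2\sum_{k\ge L+1}r^{n_k}$, and this is where the growth hypothesis enters. By maximality of $L$ we have $n_{L+2}>1/s$, and iterating $n_{k+1}\ge n_k^p$ gives $n_{L+2+m}\ge n_{L+2}^{\,p^m}>(1/s)^{p^m}$ for all $m\ge 0$. Hence $n_{L+2+m}\,s^2/C>(1/s)^{p^m}s^2/C=(1/s)^{p^m-2}/C$, so $r^{n_{L+2+m}}\le e^{-(1/s)^{p^m-2}/C}$; and since $1/s>2$, for every $m$ with $p^m>2$ this is at most $e^{-2^{p^m-2}/C}$, which by Bernoulli's inequality $p^m\ge 1+m(p-1)$ is at most $e^{-2^{(p-1)m-1}/C}$ for $m$ large, summable in $m$ to a quantity depending only on $p$ and $C$. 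The remaining finitely many terms (the single term $k=L+1$, and those $m$ with $p^m\le 2$ or $m<1/(p-1)$) are each at most $1$, in number bounded in terms of $p$ alone. Adding up, the tail is bounded by a constant depending only on $p$ and $C$; combined with the head bound and the case $s\ge 1/2$, and taking the supremum over $\gamma$, this gives the lemma.

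The routine parts are the two elementary inequalities and the Bernoulli estimate. The one point that needs care — and which genuinely uses $n_{k+1}\ge n_k^{p}$ with $p>1$, unlike the Stolz-region Lemma~\ref{4.2}, which needs no growth at all — is the tail: because $|1-z|/(1-|z|)$ is unbounded as $z\to 1$, the clean telescoping argument of Lemma~\ref{4.2} is unavailable, so instead one must exploit that precisely when $s=|1-z|$ is small (so that the decay $r^{n}=e^{-O(ns^2)}$ is weak) the indices $n_k$ beyond the threshold $1/s$ are forced by the hypothesis to grow doubly exponentially, which more than compensates.
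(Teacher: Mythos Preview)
Your proof is correct and follows essentially the same approach as the paper: both split the sum at the threshold $n_{L+1}\approx 1/|1-\widehat\mu|$, bound the head by the identical telescoping estimate $\le s\cdot n_{L+1}\le 1$, and control the tail using the strict-aperiodicity inequality together with the super-exponential growth $n_{k+1}\ge n_k^{\,p}$. The only difference is bookkeeping in the tail: the paper telescopes once more to reduce to the single quantity $|\widehat\mu|^{n_{L+d+2}}/(1-|\widehat\mu|)$ and bounds it via a logarithmic computation (choosing a buffer $d$ with $p^d>3$), whereas you sum $r^{n_k}$ term by term via $r^{n}\le e^{-ns^2/C}$ and the doubly-exponential growth of $n_{L+2+m}$, which is a slightly cleaner way to organize the same estimate.
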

\begin{proof}
Let $S_{L} =\sum\limits_{k=1}^{L}\left|\widehat{\mu}^{n_{k+1}}-\widehat{\mu}^{n_{k}}\right|$.  Then
$$
\begin{aligned}
S_{L}&\leq \sum\limits_{k=1}^{L}|\widehat{\mu}|^{n_{k}}\left|\widehat{\mu}^{n_{k+1}-n_{k}}-1\right| \\
& =\sum\limits_{k=1}^{L}|\widehat{\mu}|^{n_{k}}|\widehat{\mu}-1|\left|1+\hat{\mu}+\ldots+\hat{\mu}^{n_{k+1}-n_{k}-1}\right| 
 \leq \sum\limits_{k=1}^{L}|\widehat{\mu}|^{n_{k}}|\widehat{\mu}-1|\left(n_{k+1}-n_{k}\right) \\
& \leq|\widehat{\mu}-1| \sum\limits_{k=1}^{L}\left(n_{k+1}-n_{k}\right) 
 \leq|\widehat{\mu}-1| n_{L+1}
\end{aligned}
$$

Let $R_{L} =\sum\limits_{k=L+2}^{\infty}\left|\widehat{\mu}_{k+1}^{n}-\widehat{\mu}_{k}^{n}\right|$. 
$$
\begin{aligned}
R_{L} & \leq \sum\limits_{k=L+2}^{\infty}|\hat{\mu}|^{n_{k}}\left|\hat{\mu}^{n_{k+1}-n_{k}}-1\right| \\
& =\sum\limits_{k=L+2}^{\infty}|\hat{\mu}|^{n_{k}}|\hat{\mu}-1|\left|1+\hat{\mu}+\ldots+\hat{\mu}^{n_{k+1}-n_{k}-1}\right| 
 \leq \sum\limits_{k=L+2}^{\infty}|\hat{\mu}|^{n_{k}}|\hat{\mu}-1| \frac{1-|\hat{\mu}|^{n_{k+1}-n_{k}}}{1-|\mu|} \\
& =\frac{\hat{\mu}-1}{1-|\hat{\mu}|} \sum\limits_{k=L+2}^{\infty}|\hat{\mu}|^{n_{k}}-|\hat{\mu}|^{n_{k+1}}
\leq \frac{\hat{\mu}-1}{1-|\hat{\mu}|}|\hat{\mu}|^{n_{L+2}} 
 \leq 2 \frac{|\hat{\mu}|^{n_{L+2}}}{1-|\hat{\mu}|}
\end{aligned}
$$

We assume without loss of generality that $0<|\hat{\mu}-1|<1$, and for each $\gamma$, we choose $L(\gamma)$ such that

$$
n_{L+1} \leq \frac{1}{|\hat{\mu}-1|}<n_{L+2}
$$

Note that $S_{L} \leq 1$ and choose $d \geq 1$ such that $p^{d}>3$, then $n_{L+d+2} \geq n_{L+2} p^{p^{d}}$ thus

$$
\begin{aligned}
& \ln \frac{|\hat{\mu}|^{n_{L+d+2}}}{1-|\hat{\mu}|}=n_{L+d+2} \ln |\hat{\mu}|+\ln \frac{1}{1-|\hat{\mu}|} \\
& \leq n_{L+2}{ }^{p^{d}} \ln |\hat{\mu}|+\ln \frac{1}{1-|\hat{\mu}|} \\
& \leq \frac{\ln |\hat{\mu}|}{|\hat{\mu}-1|^{p^{d}}}+\ln \frac{1}{1-|\hat{\mu}|}
\end{aligned}
$$

Furthermore,

\[
\begin{array}{rlr}
|\hat{\mu}-1|^{p^{d}} & =|\hat{\mu}-1|^{p^{d}-1}|\hat{\mu}-1| \\
& \left.\leq|\hat{\mu}-1|^{2}|\hat{\mu}-1| \quad \text { (we're assuming }|\hat{\mu}-1|<1\right) \\
& \leq C(1-|\hat{\mu}|)|\hat{\mu}-1| \quad \text { (by hypothesis) } \tag{by hypothesis}
\end{array}
\]

Thus

$$
\frac{-(1-|\hat{\mu}|)}{|\hat{\mu}-1|^{p^{d}}} \leq \frac{-1}{C(1-|\hat{\mu}|)|\hat{\mu}-1|}
$$

and we have

\begin{equation*}
\frac{-(1-|\hat{\mu}|)}{|\hat{\mu}-1|^{p^{d}}} \leq \frac{-1}{C|\hat{\mu}-1|} \tag{4.2}
\end{equation*}

Also, by Taylor's expansion, we see that

\begin{equation*}
\ln |\hat{\mu}| \leq|\hat{\mu}|-1=-(1-|\hat{\mu}|) \tag{4.3}
\end{equation*}

Thus continuing from inequality (4.1), we have

$$
\begin{array}{rlr}
\ln \frac{|\hat{\mu}|^{n_{L}+d+2}}{1-|\hat{\mu}|} & \leq \frac{\ln |\hat{\mu}|}{|\hat{\mu}-1|^{p^{d}}}+\ln \frac{1}{1-|\hat{\mu}|} \\
& \leq \frac{-(1-|\hat{\mu}|)}{|\hat{\mu}-1|^{p^{d}}}+\ln \frac{1}{1-|\hat{\mu}|} \quad \text { by inequality (4.3) } \\
& \leq \frac{-1}{C|\hat{\mu}-1|}+\ln \frac{1}{1-|\hat{\mu}|} \quad \text { by inequality (4.2) } \\
& \leq \frac{-1}{C|\hat{\mu}-1|}+\ln \frac{C}{|\hat{\mu}-1|^{2}} \quad \text { since } \mu \text { is strictly aperiodic }
\end{array}
$$

But $\lim _{x \rightarrow 0}\left(\frac{-1}{c|x|}-2 \ln |x|\right)=-\infty$

Thus we've just shown that $\sup _{\gamma} R_{L+d}$ is finite and we recall that earlier in the proof we showed that

$$
\begin{aligned}
S_{L} & \leq|\hat{\mu}-1| n_{L+1} \\
& \leq \frac{|\hat{\mu}-1|}{|\hat{\mu}-1|}=1
\end{aligned}
$$

(here, the last inequality follows from how $\mathrm{L}$ was chosen)

Thus we see that

$$
\sup _{|\gamma|=1} \sum\limits_{k=1}^{\infty}\left|\widehat{\mu}^{n_{k+1}}(\gamma)-\widehat{\mu}^{n_{k}}(\gamma)\right| \leq S_{L}+2(d+1)+R_{L+d+2}
$$

is also finite and that $d$ is independent of either $L$ or $\gamma$
\end{proof}

A version of the following fact appears in the the paper of Jones, Ostrovskii, and Rosenblatt~\cite{JOR}

\begin{lem}~\label{4.4} There is a strictly aperiodic measure $\mu$ (namely the measure $\mu=\frac{\delta_{0}+\delta_{1}}{2}$ ) such that when $n_{k}=q^{k}$ with $q \geq 2$, it follows that

$$
\sup _{|\gamma|=1} \sum\limits_{k=1}^{\infty}\left|\hat{\mu}^{n_{k+1}}(\gamma)-\hat{\mu}^{n_{k}}(\gamma)\right|^{2}=\infty
$$
\end{lem}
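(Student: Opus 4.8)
The plan is to compute $\widehat\mu$ explicitly, discard the ``amplitude'' part of each difference, keep only the ``phase'' part, and then run an averaging argument over a short interval of frequencies. For $\gamma=e^{i\theta}$ one has $\widehat\mu(e^{i\theta})=\tfrac{1+e^{i\theta}}{2}=e^{i\theta/2}\cos(\theta/2)$, so, writing $n_k=q^k$, $a_k=\cos^{q^k}(\theta/2)>0$ for $|\theta|<\pi$, and $\psi_k=q^k\theta/2$, we get $\widehat\mu^{\,n_k}(e^{i\theta})=a_ke^{i\psi_k}$ and
$$\bigl|\widehat\mu^{\,n_{k+1}}(e^{i\theta})-\widehat\mu^{\,n_k}(e^{i\theta})\bigr|^2=(a_k-a_{k+1})^2+2a_ka_{k+1}\bigl(1-\cos t_k\bigr),\qquad t_k:=\psi_{k+1}-\psi_k=\tfrac12(q-1)q^k\theta.$$
Since the first term is nonnegative, it suffices to bound $\sum_k 2a_ka_{k+1}(1-\cos t_k)$ from below.

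Next I would locate the ``active window'' of indices. Using $\cos x\ge e^{-x^2}$ for $|x|\le 1$ one gets $a_k\ge e^{-q^k\theta^2/4}$, so $a_k$ and $a_{k+1}$ stay above a fixed constant $c>0$ as long as $q^k\theta^2=O(1)$; on the other hand $t_k\gtrsim 1$ once $q^k\theta\gtrsim 1$. Fix a small $\theta_0$ and let $K=K(\theta_0)$ be the set of indices $k$ with $\tfrac{8}{(q-1)\theta_0}\le q^k\le \tfrac{c_1}{\theta_0^2}$, where $c_1$ depends only on $q$ and $c$. Then for every $\theta\in[\theta_0,2\theta_0]$ and every $k\in K$ we have $a_k,a_{k+1}\ge c$, while, as $\theta$ ranges over $[\theta_0,2\theta_0]$, the quantity $t_k$ sweeps an interval $I_k$ of length $\ge 4$. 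The crucial point is that $\#K(\theta_0)$ is of order $\log_q(1/\theta_0)$, which tends to infinity as $\theta_0\to 0$.

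Finally I would average over $\theta$. For $k\in K$, a change of variables gives $\tfrac{1}{\theta_0}\int_{\theta_0}^{2\theta_0}(1-\cos t_k)\,d\theta\ge 1-\tfrac{2}{|I_k|}\ge\tfrac12$, since $\bigl|\int_{I_k}\cos s\,ds\bigr|\le 2$ and $|I_k|\ge 4$. Summing over $k\in K$,
$$\frac{1}{\theta_0}\int_{\theta_0}^{2\theta_0}\ \sum_{k\in K}\bigl|\widehat\mu^{\,n_{k+1}}(e^{i\theta})-\widehat\mu^{\,n_k}(e^{i\theta})\bigr|^2\,d\theta\ \ge\ 2c^2\cdot\tfrac12\cdot\#K(\theta_0)\ =\ c^2\,\#K(\theta_0)\ \xrightarrow[\theta_0\to 0]{}\ \infty.$$
Given $N$, choose $\theta_0$ with $c^2\#K(\theta_0)>N$; then some $\theta^*\in[\theta_0,2\theta_0]$ satisfies $\sum_{k\ge1}\bigl|\widehat\mu^{\,n_{k+1}}(e^{i\theta^*})-\widehat\mu^{\,n_k}(e^{i\theta^*})\bigr|^2>N$, and since $N$ is arbitrary the supremum over $|\gamma|=1$ is infinite.

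The main obstacle is the window estimate: the naive idea of watching the amplitude $a_k$ pass through its transition (where $q^k\theta^2\sim 1$) only yields $O(1)$ relevant indices and hence a bounded sum. One must instead recognize the much longer intermediate range $1/\theta_0\lesssim q^k\lesssim 1/\theta_0^2$, where $a_k$ is still close to $1$ but the phases $\psi_k$ have already spread out, and this range has $\sim\log_q(1/\theta_0)$ indices. Making the phase gain $1-\cos t_k$ quantitative on that range is exactly what forces the averaging over $\theta$ in the last step, because for a single fixed $\theta$ the geometric sequence $(t_k)$ could, a priori, stay near multiples of $2\pi$.
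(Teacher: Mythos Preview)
Your argument is correct. The paper itself does not supply a proof of this lemma; it simply attributes the fact to Jones, Ostrovskii, and Rosenblatt~\cite{JOR}. So there is no in-paper proof to compare against, only the citation.

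Your approach is self-contained and quite clean. The key observation --- that the amplitude $a_k=\cos^{q^k}(\theta/2)$ stays close to $1$ throughout the range $1/\theta_0\lesssim q^k\lesssim 1/\theta_0^{2}$, giving $\sim\log_q(1/\theta_0)$ active indices rather than $O(1)$ --- is exactly the right one; as you note, focusing only on the transition regime of $a_k$ would fail. The averaging over $\theta\in[\theta_0,2\theta_0]$ to force $1-\cos t_k$ to contribute on average at least $1/2$ is a standard and legitimate device here, and your change-of-variables bookkeeping (so that $|I_k|\ge 4$ follows from the left endpoint $q^k\ge 8/((q-1)\theta_0)$) checks out. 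One small point worth making explicit for a reader: the lower bound $a_k,a_{k+1}\ge c$ must hold uniformly for \emph{all} $\theta\in[\theta_0,2\theta_0]$, which is why the upper cutoff $q^k\le c_1/\theta_0^2$ needs $c_1$ chosen to absorb the factor $q(2\theta_0)^2/\theta_0^2=4q$; you have implicitly done this but it would help to say so.

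The argument actually used in~\cite{JOR} (Theorem~1.10 there) is somewhat different in style: it works directly with a single carefully chosen $\theta$ rather than averaging, exploiting more specific arithmetic of the phases $q^k\theta/2$ along a geometric progression. Your averaging argument trades that arithmetic for a soft pigeonhole step, which is arguably more robust (it would adapt with only cosmetic changes to other strictly aperiodic $\mu$ whose transform has the same polar structure $|\widehat\mu|\,e^{i\phi}$ with $\phi'(0)\ne 0$).
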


\begin{lem}\label{4.5} If $n_{k}=q^{k}$, with $q \geq 2$ then there is a strictly aperiodic measure $\mu$ (namely the measure $\mu=\frac{\delta_{0}+\delta_{1}}{2}$ ) such that

$$
\sup _{|\gamma|=1} \sum\limits_{k=1}^{\infty}\left|\hat{\mu}^{n_{k+1}}(\gamma)-\hat{\mu}^{n_{k}}(\gamma)\right|=\infty
$$
\end{lem}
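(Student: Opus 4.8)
The plan is to obtain this $\ell^{1}$ statement as an immediate consequence of the $\ell^{2}$ version already recorded as Lemma~\ref{4.4}, using nothing more than the elementary comparison between $\ell^{1}$ and $\ell^{2}$ sums of uniformly bounded sequences. So the substantive work — the divergence of the squared sum for the specific measure $\frac{\delta_{0}+\delta_{1}}{2}$ along a lacunary sequence — is already done, and I expect no real obstacle beyond a single routine uniform bound.

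First I would fix $\mu=\frac{\delta_{0}+\delta_{1}}{2}$, so that $\hat{\mu}(\gamma)=\frac{1+\gamma}{2}$ and in particular $|\hat{\mu}(\gamma)|\le 1$ for every $\gamma$ with $|\gamma|=1$ (concretely $|\hat{\mu}(e^{i\theta})|=|\cos(\theta/2)|$). It follows that for every $k$ and every such $\gamma$,
\[
\bigl|\hat{\mu}^{\,n_{k+1}}(\gamma)-\hat{\mu}^{\,n_{k}}(\gamma)\bigr|\le |\hat{\mu}(\gamma)|^{n_{k+1}}+|\hat{\mu}(\gamma)|^{n_{k}}\le 2 .
\]
Next I would use that any complex number $a$ with $|a|\le 2$ satisfies $|a|^{2}=|a|\cdot|a|\le 2|a|$, i.e. $|a|\ge \tfrac12|a|^{2}$. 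Applying this term by term with $a=\hat{\mu}^{\,n_{k+1}}(\gamma)-\hat{\mu}^{\,n_{k}}(\gamma)$ and summing gives, for each fixed $\gamma$ with $|\gamma|=1$,
\[
\sum_{k=1}^{\infty}\bigl|\hat{\mu}^{\,n_{k+1}}(\gamma)-\hat{\mu}^{\,n_{k}}(\gamma)\bigr|\;\ge\;\frac12\sum_{k=1}^{\infty}\bigl|\hat{\mu}^{\,n_{k+1}}(\gamma)-\hat{\mu}^{\,n_{k}}(\gamma)\bigr|^{2}.
\]
Taking the supremum over $|\gamma|=1$ on both sides and invoking Lemma~\ref{4.4} — which asserts precisely that the right-hand supremum is infinite for this $\mu$ and for $n_{k}=q^{k}$, $q\ge 2$ — forces the left-hand supremum to be infinite as well, which is the claim of Lemma~\ref{4.5}.

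The only point requiring any care is the uniform bound on the individual terms that legitimizes the pointwise passage from the squared sum to the $\ell^{1}$ sum, and this is immediate from $|\hat{\mu}|\le 1$; there is nothing else to check. If one instead wanted a proof independent of Lemma~\ref{4.4}, one would analyze $\hat{\mu}(e^{i\theta})=e^{i\theta/2}\cos(\theta/2)$ directly: for $\theta$ chosen near a suitable value, the moduli $(\cos(\theta/2))^{n_{k}}$ and the phases $n_{k}\theta/2\pmod{2\pi}$ combine so that the differences stay comparable to $(\cos(\theta/2))^{n_{k}}$ over the range of indices $k$ with $n_{k}\theta^{2}\lesssim 1$, of which there are $\asymp\log(1/\theta)$ because $n_{k}=q^{k}$ is lacunary; letting $\theta\to 0$ then produces an unbounded sum. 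The comparison argument above is cleaner, however, and that is the one I would present.
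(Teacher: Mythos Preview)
Your proof is correct and follows essentially the same approach as the paper: both derive Lemma~\ref{4.5} directly from Lemma~\ref{4.4} via an elementary $\ell^{1}$--$\ell^{2}$ comparison. The only cosmetic difference is that the paper invokes the general sequence-space inequality $\|f\|_{2}\le\|f\|_{1}$ (so $\sum|a_{k}|\ge(\sum|a_{k}|^{2})^{1/2}$), whereas you use the uniform bound $|a_{k}|\le 2$ to get $\sum|a_{k}|\ge\tfrac12\sum|a_{k}|^{2}$; either route is immediate.
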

\begin{proof}The proof is trivial using Lemma~\ref{4.4} above since for any function $f$ defined on $\mathbb{Z},\|f\|_{2} \leq\|f\|_{1}$
\end{proof}

We recall that in Jones, Ostrovskii, and Rosenblatt~\cite{JOR}, it is shown that if $\mu$ is strictly aperiodic then for some constant $\mathrm{C}$,

$$
\sup _{|\gamma|=1} \sum\limits_{k=1}^{\infty}\left|\hat{\mu}^{n_{k+1}}(\gamma)-\hat{\mu}^{n_{k}}(\gamma)\right|^{2} \leq C
$$

\noindent when $n_{k}=k$. Thus it is interesting to note that when the exponent $2$ is decreased this boundedness is no longer guaranteed under just the condition that $\mu$ is strictly aperiodic:

\begin{lem}\label{4.6} There are strictly aperiodic measures $\mu$ which do not have the property that for some constant $C$,

$$
\sup _{|\gamma|=1} \sum\limits_{k=1}^{\infty}\left|\hat{\mu}^{k+1}(\gamma)-\hat{\mu}^{k}(\gamma)\right|<C
$$
\end{lem}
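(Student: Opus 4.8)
The plan is to exploit the factorization $\hat\mu^{k+1}(\gamma)-\hat\mu^{k}(\gamma)=\hat\mu^{k}(\gamma)\bigl(\hat\mu(\gamma)-1\bigr)$, which collapses the series into a geometric one: for every $\gamma$ with $|\gamma|=1$ and $|\hat\mu(\gamma)|<1$,
\[
\sum_{k=1}^{\infty}\bigl|\hat\mu^{k+1}(\gamma)-\hat\mu^{k}(\gamma)\bigr|
=|1-\hat\mu(\gamma)|\sum_{k=1}^{\infty}|\hat\mu(\gamma)|^{k}
=\frac{|\hat\mu(\gamma)|\,|1-\hat\mu(\gamma)|}{1-|\hat\mu(\gamma)|}.
\]
Thus the lemma reduces to producing a strictly aperiodic $\mu$ with $\sup_{|\gamma|=1}\frac{|1-\hat\mu(\gamma)|}{1-|\hat\mu(\gamma)|}=\infty$; geometrically, a measure whose spectrum sits in a horocycle tangent to the unit circle at $1$ (so that $\frac{|1-\hat\mu|^{2}}{1-|\hat\mu|^{2}}$ stays bounded) but escapes every Stolz region with vertex at $1$. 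The point is that on a circle tangent to the unit circle at $1$, as $z$ runs toward the tangent point the quantities $|1-z|$ and $1-|z|$ are of orders $\psi$ and $\psi^{2}$ in the arc parameter $\psi$, so $|1-z|/(1-|z|)$ is unbounded there while $|1-z|^{2}/(1-|z|^{2})$ stays bounded.

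The measure I would use is the two-point measure $\mu=\tfrac{1}{2}(\delta_{0}+\delta_{1})$, already known to be strictly aperiodic (it is the measure appearing in Lemma~\ref{4.4}): here $\hat\mu(e^{i\theta})=\tfrac{1}{2}(1+e^{i\theta})$, so the spectrum is exactly the circle $|z-\tfrac12|=\tfrac12$ and $\frac{|1-\hat\mu|^{2}}{1-|\hat\mu|^{2}}\equiv 1$. For $0<\theta<\pi$ one has $|\hat\mu(e^{i\theta})|=\cos(\theta/2)$ and $|1-\hat\mu(e^{i\theta})|=|\sin(\theta/2)|$. Substituting into the displayed identity and using $1-\cos(\theta/2)=2\sin^{2}(\theta/4)$ together with $\sin(\theta/2)=2\sin(\theta/4)\cos(\theta/4)$ gives
\[
\sum_{k=1}^{\infty}\bigl|\hat\mu^{k+1}(e^{i\theta})-\hat\mu^{k}(e^{i\theta})\bigr|
=\frac{\cos(\theta/2)\,\sin(\theta/2)}{1-\cos(\theta/2)}
=\cos(\theta/2)\cot(\theta/4),
\]
and since $\cos(\theta/2)\to 1$ and $\cot(\theta/4)\to+\infty$ as $\theta\to0^{+}$, the supremum over $|\gamma|=1$ is infinite, which is precisely the asserted failure.

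The only thing requiring thought is the choice of example; after that the verification is a one-line half-angle computation, so there is no real obstacle. If one wants a family rather than a single measure, the same argument runs verbatim for any finitely supported aperiodic $\mu$ of nonzero mean: writing $\hat\mu(e^{i\theta})=1+im\theta-\tfrac{1}{2}(m^{2}+\sigma^{2})\theta^{2}+o(\theta^{2})$ with $m\neq0$, one gets $|1-\hat\mu(e^{i\theta})|\sim|m|\,|\theta|$ and $1-|\hat\mu(e^{i\theta})|\sim\tfrac{1}{2}\sigma^{2}\theta^{2}$, so the geometric sum again blows up like $1/|\theta|$ while $\frac{|1-\hat\mu|^{2}}{1-|\hat\mu|^{2}}\to m^{2}/\sigma^{2}$ keeps $\mu$ strictly aperiodic. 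The two-point measure is the cleanest instance and simply reuses what Lemma~\ref{4.4} already provides.
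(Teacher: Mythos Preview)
Your proof is correct and follows essentially the same approach as the paper: both factor $\hat\mu^{k+1}-\hat\mu^{k}=\hat\mu^{k}(\hat\mu-1)$, sum the geometric series to obtain $\dfrac{|\hat\mu|\,|1-\hat\mu|}{1-|\hat\mu|}$, and then invoke $\mu=\tfrac12(\delta_{0}+\delta_{1})$ as the strictly aperiodic example for which $\dfrac{|1-\hat\mu|}{1-|\hat\mu|}$ is unbounded. You simply supply the explicit half-angle verification that the paper leaves to the reader.
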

\begin{proof}  This is easy to see since

$$
\begin{aligned}
\sum\limits_{k=1}^{\infty}\left|\hat{\mu}^{k+1}-\hat{\mu}^{k}\right| & =\sum\limits_{k=1}^{\infty}|\hat{\mu}|^{k}|\hat{\mu}-1| \\
& =\frac{|\hat{\mu}-1||\hat{\mu}|}{1-|\hat{\mu}|}
\end{aligned}
$$

Thus we see that there is some constant $\mathrm{C}$ such that

$$
\sup _{|\gamma|=1} \sum\limits_{k=1}^{\infty}\left|\hat{\mu}^{k+1}(\gamma)-\hat{\mu}^{k}(\gamma)\right|<C
$$

if and only if $\frac{|\hat{\mu}-1|}{1-|\hat{\mu}|}$ is bounded. There are plenty of strictly aperiodic measures which do not satisfy: $\frac{|\hat{\mu}-1|}{1-|\hat{\mu}|}$ is bounded. The measure $\frac{\delta_{0}+\delta_{1}}{2}$ is such an example.
\end{proof}

It is shown in Theorem 10 of Rosenblatt~\cite{Rosenblatt} that if $\mu=\frac{\delta_{0}+\delta_{1}}{2}$, then for all $\epsilon>0$, there exists a set $E$, with $m(E)<\epsilon$ such that $\lim \sup \mu^{n} 1_{E}(x)=1$ a.e. In other words, it is shown that the sequence of measures $\left(\mu^{n}\right)_{n=1}^{\infty}$ is strongly sweeping out.

Furthermore it is shown in Corollary 11 of Bellow, Jones and Rosenblatt~\cite{BJR} that for all $f\in L^1$, $\lim\limits_{m\to \infty} \mu^{2^{2^m}}f(x)$ exists for a.e. $x$.

We note that we would have another proof of this fact if we could just prove that when $n_{k}=2^{2^{k}}$ and $\mu=\frac{\delta_{0}+\delta_{1}}{2}, \sup _{|\gamma|=1} \sum\limits_{k=1}^{\infty} \ln (k+1)\left|\hat{\mu}^{n_{k+1}}(\gamma)-\hat{\mu}^{n_{k}}(\gamma)\right|<\infty$. For it would then follow by a weighted version of Theorem~\ref{4.1} that $\sum\limits_{k=1}^{\infty} \ln (k+1)\left(\mu^{n_{k+1}} f-\mu^{n_{k}} f\right)$ is unconditionally convergent in $L^2$ for all $f \in L^2$. But since $\|f\|_{1} \leq\|f\|_{2}$, we see that $\sum\limits_{k=1}^{\infty} \ln (k+1)\left(\mu^{n_{k+1}} f-\mu^{n_{k}} f\right)$ is unconditionally convergent in $L^1$ for all $f \in L^2$. But by Theorem 2.5 in Bennett~\cite{Bennett}, it follows that $\sum\limits_{k=1}^{\infty}\left(\mu^{n_{k+1}} f(x)-\mu^{n_{k}} f(x)\right)$ converges for a.e. $x$ for all $f \in L^2$ Thus this would imply the desired: $\mu^{n_{k}} f(x)$ converges for a.e. $x$ whenever $f \in L^2$. The following lemma however rules out the possibility of using this line of reasoning to prove such an a.e. convergence result.

\begin{lem}\label{4.7} If $\mu=\frac{\delta_{0}+\delta_{1}}{2}, n_{k}$ is any sequence of positive integers satisfying

$$
2 n_{k}{ }^{p} \geq n_{k+1} \geq n_{k}^{p}, p>1
$$

and $\left\{w_{k}\right\}$ is any sequence of reals increasing to $\infty$ then it follows that

$$
\sup _{|\gamma|=1} \sum\limits_{k=1}^{\infty} w_{k}\left|\widehat{\mu}^{n_{k+1}}(\gamma)-\widehat{\mu}^{n_{k}}(\gamma)\right|=\infty
$$
\end{lem}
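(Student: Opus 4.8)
The plan is to bound the supremum from below by testing it against a single well-chosen point $\gamma_k\in\mathbb{T}$ for each large $k$, showing that the $k$-th term of the series already contributes at least a fixed constant, so that the full sum is $\gtrsim w_k\to\infty$.

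Write $\gamma=e^{i\theta}$. Since $\mu=\frac{\delta_0+\delta_1}{2}$ we have $\widehat{\mu}(e^{i\theta})=\frac{1+e^{i\theta}}{2}=\cos(\theta/2)\,e^{i\theta/2}$, so $|\widehat{\mu}(e^{i\theta})|=\cos(\theta/2)$ for $0\le\theta<\pi$. From the Taylor expansion $-\ln\cos x=\frac{x^2}{2}+\frac{x^4}{12}+\cdots$ (all coefficients positive) one gets the elementary two-sided estimate
$$
e^{-n\theta^2/4}\ \le\ \cos(\theta/2)^{\,n}\ \le\ e^{-n\theta^2/8}\qquad (0\le\theta\le 1,\ n\ge 1),
$$
whose content is that $|\widehat{\mu}(e^{i\theta})|^{\,n}$ stays bounded away from $0$ when $n\theta^2$ is of order $1$ but has decayed to essentially $0$ once $n\theta^2$ is large. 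Accordingly, for each $k$ large enough that $\theta_k:=n_k^{-1/2}\le 1$ put $\gamma_k=e^{i\theta_k}$; note $\theta_k\to 0$ because $n_k\to\infty$. Then $n_k\theta_k^2=1$, while $n_{k+1}\theta_k^2\ge n_k^{\,p}\theta_k^2=n_k^{\,p-1}\to\infty$ by the hypothesis $n_{k+1}\ge n_k^{\,p}$ with $p>1$.

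Using the reverse triangle inequality together with the two estimates above,
$$
\bigl|\widehat{\mu}^{\,n_{k+1}}(\gamma_k)-\widehat{\mu}^{\,n_k}(\gamma_k)\bigr|\ \ge\ |\widehat{\mu}(\gamma_k)|^{\,n_k}-|\widehat{\mu}(\gamma_k)|^{\,n_{k+1}}\ \ge\ e^{-1/4}-e^{-n_k^{\,p-1}/8},
$$
which exceeds $c_0:=\tfrac12 e^{-1/4}>0$ for all sufficiently large $k$. Since all terms of $\sum_{j\ge 1}w_j\,\bigl|\widehat{\mu}^{\,n_{j+1}}(\gamma_k)-\widehat{\mu}^{\,n_j}(\gamma_k)\bigr|$ are non-negative, this sum is at least $w_k c_0$; taking the supremum over $|\gamma|=1$ and then letting $k\to\infty$, and using $w_k\to\infty$, gives the asserted divergence.

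The only routine part is verifying the two inequalities for $\cos(\theta/2)^n$; the conceptual point — and the only place the growth condition enters — is that $n_{k+1}/n_k\ge n_k^{\,p-1}\to\infty$ allows us to scale $\theta$ so that $|\widehat{\mu}(e^{i\theta})|^{\,n_k}$ is still of order one while $|\widehat{\mu}(e^{i\theta})|^{\,n_{k+1}}$ has already collapsed, which is exactly what makes the $k$-th difference large. The upper bound $n_{k+1}\le 2n_k^{\,p}$ plays no role in this direction, and the argument uses, as throughout the section, that $(n_k)$ tends to infinity — which the hypotheses indeed force, since a bounded such sequence would have to be eventually constant equal to $1$.
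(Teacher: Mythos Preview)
Your argument is correct and is in fact cleaner than the paper's. Both proofs follow the same overall plan: for each large $k$, exhibit a point $\gamma_k\to 1$ at which $|\widehat{\mu}^{n_{k+1}}(\gamma_k)-\widehat{\mu}^{n_k}(\gamma_k)|$ is bounded below by a fixed positive constant, so that the supremum of the series dominates $w_k$ times that constant. The difference lies in how this lower bound is produced.

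The paper takes $t_k$ in the window $(\pi/n_{k+1},\,\pi/(n_{k+1}-n_k))$ and bounds $|\widehat{\mu}^{n_{k+1}}-\widehat{\mu}^{n_k}|$ from below by the imaginary part, obtaining $(\cos t_k/2)^{n_{k+1}}\sin\!\big((n_{k+1}-n_k)t_k/2\big)$; a page of algebraic estimates, using both inequalities $n_k^{\,p}\le n_{k+1}\le 2n_k^{\,p}$, then shows this tends to $1/4$. You instead set $\theta_k=n_k^{-1/2}$ and apply the reverse triangle inequality $|z^{n_{k+1}}-z^{n_k}|\ge |z|^{n_k}-|z|^{n_{k+1}}$, so the problem reduces to the Gaussian-type estimate $e^{-n\theta^2/4}\le\cos(\theta/2)^n\le e^{-n\theta^2/8}$. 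This is shorter, avoids the delicate choice of $t_k$, and---as you note---never uses the upper bound $n_{k+1}\le 2n_k^{\,p}$: only $n_{k+1}/n_k\ge n_k^{\,p-1}\to\infty$ is needed. Your closing remark about the degenerate case $n_k\equiv 1$ is also apt; the paper's own proof tacitly assumes the sequence eventually increases.
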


We note that the sequence $n_{k}=2^{2^{k}}$ satisfies the conditions of Lemma 4.7.

\begin{proof}[Proof of Lemma~\ref{4.7}] 
It suffices to show that if

\begin{equation*}
2 n_{k}{ }^{p} \geq n_{k+1} \geq n_{k}{ }^{p}, p>1 \tag{4.4}
\end{equation*}

then there is a sequence of positive real numbers $\left\{\gamma_{k}\right\}$ with $\gamma_{k} \rightarrow 0$ such that for some $\delta>0$, eventually

$$
\left|\widehat{\mu}_{n_{k+1}}\left(\gamma_{k}\right)-\widehat{\mu}_{n_{k}}\left(\gamma_{k}\right)\right| \geq \delta>0
$$

will hold true (when $k$ is big enough).

We observe that $\widehat{\mu}\left(e^{i t}\right)=e^{i t / 2} \cos (t / 2)$ thus

$$
\begin{aligned}
\left|\widehat{\mu}_{n_{k+1}}(\gamma)-\widehat{\mu}_{n_{k}}(\gamma)\right| & =|\hat{\mu}(\gamma)|^{n_{k}}\left|\hat{\mu}^{n_{k+1}-n_{k}}(\gamma)-1\right| \\
& \geq(\cos t / 2)^{n_{k}}\left(\sin \left(\frac{\left(n_{k+1}-n_{k}\right) t}{2}\right)\right)(\cos t / 2)^{n_{k+1}-n_{k}} \\
& =(\cos t / 2)^{n_{k+1}}\left(\sin \left(\frac{\left(n_{k+1}-n_{k}\right) t}{2}\right)\right)
\end{aligned}
$$

The above inequality follows from the fact that $|z| \geq \operatorname{Im}(z)$ for any complex number $\mathrm{z}$.

Next, choose $t_{k}$ such that

\begin{equation*}
\frac{\pi}{2 n_{k}{ }^{p}}<\frac{\pi}{n_{k+1}}<t_{k}<\frac{\pi}{n_{k+1}-n_{k}}<\frac{\pi}{n_{k}{ }^{p}-n_{k}} \tag{4.5}
\end{equation*}

We observe that

$$
\left(t_{k}<\frac{\pi}{n_{k+1}-n_{k}}\right) \Rightarrow\left(\frac{\left(n_{k+1}-n_{k}\right) t_{k}}{2}<\frac{\pi}{2}\right) \Rightarrow\left(\sin \frac{\left(n_{k+1}-n_{k}\right) t_{k}}{2} \geq \frac{2}{\pi} \frac{\left(n_{k+1}-n_{k}\right) t_{k}}{2}\right)
$$

Thus we see that

$$
\begin{aligned}
\left|\widehat{\mu}^{n_{k+1}}\left(e^{i t_{k}}\right)-\widehat{\mu}^{n_{k}}\left(e^{i t_{k}}\right)\right| & \geq\left(\cos t_{k} / 2\right)^{n_{k+1}}\left(\frac{\left(n_{k+1}-n_{k}\right) t_{k}}{\pi}\right) \\
& \geq\left(1-\frac{t_{k}^{2}}{8}\right)^{n_{k+1}}\left(\frac{\left(n_{k+1}-n_{k}\right) t_{k}}{\pi}\right)
\end{aligned}
$$

$$
\begin{aligned}
& \geq\left(1-\frac{n_{k+1} t_{k}^{2}}{8}\right)\left(\frac{\left(n_{k+1}-n_{k}\right) t_{k}}{\pi}\right) \\
& \geq\left(1-\frac{n_{k+1} t_{k}^{2}}{8}\right)\left(\frac{\left(n_{k+1}-n_{k}\right)}{n_{k+1}}\right)(\text { by (4.5)) } \\
& \geq\left(1-\frac{n_{k+1} \pi^{2}}{\left(n_{k+1}-n_{k}\right)^{2} 8}\right)\left(\frac{\left(n_{k+1}-n_{k}\right)}{n_{k+1}}\right)(\text { by equation (4.5) again) } \\
& =\left(\frac{\left(n_{k+1}-n_{k}\right)^{2}-n_{k+1} \pi^{2} / 8}{\left(n_{k+1}-n_{k}\right)^{2}}\right)\left(\frac{n_{k+1}-n_{k}}{n_{k+1}}\right) \\
& =\left(\frac{\left(n_{k+1}-n_{k}\right)^{2}-n_{k+1} \pi^{2} / 8}{\left(n_{k+1}-n_{k}\right) n_{k+1}}\right) \\
& \geq \frac{n_{k+1}{ }^{2}-2 n_{k} n_{k+1}+n_{k}{ }^{2}-2\left(n_{k}\right)^{p} \pi^{2} / 8}{\left(n_{k+1}-n_{k}\right) n_{k+1}} \\
& \geq \frac{n_{k}{ }^{2 p}-4 n_{k} n_{k}{ }^{p}+n_{k}{ }^{2}-2 n_{k} \pi^{2} / 8}{\left(2 n_{k}{ }^{p}-n_{k}\right) 2 n_{k}{ }^{p}} \\
& \geq \frac{n_{k}{ }^{2 p}-4 n_{k}{ }^{p+1}+n_{k}{ }^{2}-4 n_{k}{ }^{p}}{4 n_{k}{ }^{2 p}-2 n_{k}{ }^{p+1}} \\
& \longrightarrow \frac{1}{4}>0
\end{aligned}
$$

The last  $\geq$ inequality follows from the fact that $\pi^{2} / 8 \leq 16 / 8=2$ and the previous two inequalities follow from algebra and also from equation (4.4). The desired result follows by choosing $\delta$ to be any positive number less than $1 / 4$.
\end{proof}

\section{Unconditional Convergence for Differences of Derivatives}\label{5}

Let

$$
\varphi_{\epsilon}\left(e^{i \theta}\right)= \begin{cases}1 / 2 \epsilon & \text { when }-\epsilon \leq \theta \leq+\epsilon \\ 0 & \text { otherwise }\end{cases}
$$

then we define

$$
\begin{aligned}
\left(\varphi_{\epsilon} * f\right)\left(e^{i x}\right) & =\int \varphi_{\epsilon}\left(e^{i y}\right) f\left(e^{i(x-y)}\right) d y \\
& =\frac{1}{2 \epsilon} \int_{-\epsilon}^{+\epsilon} f\left(e^{i(x-y)}\right) d y
\end{aligned}
$$

and we see that

$$
\begin{aligned}
\widehat{\varphi_{\epsilon}}(\ell) & =\int e^{-i \ell \dot{x}} \varphi_{\epsilon}\left(e^{i x}\right) d x \\
& =\frac{\sin \ell \epsilon}{\ell \epsilon}
\end{aligned}
$$

Upon defining $D_{\epsilon} f=\varphi_{\epsilon} * f$, we can now state the following theorem:

\begin{thm}\label{5.1}
If $\epsilon_{k}$ is any sequence of positive reals tending to zero, then following are equivalent:

(a) $\quad \sum\limits_{k=1}^{\infty}\left(D_{\epsilon_{k+1}} f-D_{\epsilon_{k}} f\right)$ is unconditionally convergent in $L^2$ for all $f \in L^2(\mathbb{T})$

(b) There is a finite constant $C$ such that $\sup _{\{\ell \in \mathbb{Z}\}} \sum\limits_{k=1}^{\infty}\left|\widehat{\varphi}_{\epsilon_{k+1}}(\ell)-\widehat{\varphi}_{\epsilon_{k}}(\ell)\right| \leq C$
\end{thm}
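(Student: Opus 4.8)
The plan is to mirror the proof of Theorem~\ref{3.1}, but the argument is cleaner in this setting because $D_\epsilon$ is itself a Fourier multiplier on $L^2(\mathbb T)$, so no Conze-style conjugation over an auxiliary dynamical system is needed: the translation structure of $\mathbb T$ already supplies the single-frequency test functions directly. The starting point is the identity $\widehat{D_\epsilon f}(\ell) = \widehat{\varphi}_\epsilon(\ell)\widehat f(\ell) = \frac{\sin \ell\epsilon}{\ell\epsilon}\widehat f(\ell)$, which shows that $\sum_{k=1}^M c_k(D_{\epsilon_{k+1}} - D_{\epsilon_k})$ is the Fourier multiplier with symbol $m_M(\ell) := \sum_{k=1}^M c_k\big(\widehat{\varphi}_{\epsilon_{k+1}}(\ell) - \widehat{\varphi}_{\epsilon_k}(\ell)\big)$. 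Hence, by Plancherel, for every $f \in L^2(\mathbb T)$, every $M \ge 1$ and every choice of $(c_k)$,
\[
\Big\|\sum_{k=1}^M c_k\big(D_{\epsilon_{k+1}}f - D_{\epsilon_k}f\big)\Big\|_2^2 = \sum_{\ell\in\mathbb Z} |m_M(\ell)|^2\,|\widehat f(\ell)|^2 .
\]

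For the implication (b) $\Rightarrow$ (a): assuming $\sup_{\ell}\sum_{k=1}^\infty |\widehat{\varphi}_{\epsilon_{k+1}}(\ell) - \widehat{\varphi}_{\epsilon_k}(\ell)| \le C$, the triangle inequality together with $|c_k| \le 1$ gives $|m_M(\ell)| \le C$ for all $\ell$ and all $M$, so the displayed quantity is at most $C^2\|f\|_2^2$. Since this bound is homogeneous in $f$ and uniform in $M$ and $(c_k)$, Theorem~\ref{2.4} (with $p = 2$ and $T_k = D_{\epsilon_{k+1}} - D_{\epsilon_k}$) yields unconditional convergence of $\sum_k (D_{\epsilon_{k+1}}f - D_{\epsilon_k}f)$ for every $f \in L^2(\mathbb T)$.

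For (a) $\Rightarrow$ (b): by Theorem~\ref{2.4} there is a constant $C$ with $\sup_{M \ge 1,\,|c_k|\le 1}\big\|\sum_{k=1}^M c_k(D_{\epsilon_{k+1}}f - D_{\epsilon_k}f)\big\|_2 \le C\|f\|_2$ for all $f$. Apply this to the single character $f(e^{ix}) = e^{i\ell x}$, for which $\widehat f = \delta_\ell$ and $\|f\|_2 = 1$; the Plancherel identity collapses to $\big\|\sum_{k=1}^M c_k(D_{\epsilon_{k+1}}f - D_{\epsilon_k}f)\big\|_2 = |m_M(\ell)|$, so $\sup_{M,\,|c_k|\le 1}|m_M(\ell)| \le C$. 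Because the symbols $\widehat{\varphi}_\epsilon(\ell) = \frac{\sin\ell\epsilon}{\ell\epsilon}$ are real, the choice $c_k = \operatorname{sgn}\big(\widehat{\varphi}_{\epsilon_{k+1}}(\ell) - \widehat{\varphi}_{\epsilon_k}(\ell)\big)$ gives $\sum_{k=1}^M |\widehat{\varphi}_{\epsilon_{k+1}}(\ell) - \widehat{\varphi}_{\epsilon_k}(\ell)| \le C$; letting $M \to \infty$ and then taking the supremum over $\ell \in \mathbb Z$ yields (b). The frequency $\ell = 0$ is harmless, since there $\widehat{\varphi}_\epsilon(0) = 1$ and every difference vanishes.

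I do not expect a genuine obstacle here: the entire content is that convolution operators are simultaneously diagonalized on the Fourier side, so unconditional convergence reduces to a scalar $\sup_\ell \sum_k$ bound on the symbol differences. The only points requiring a little care are to invoke the correct homogeneous ($1 < p < \infty$) form of Theorem~\ref{2.4} and to note that testing against pure characters is legitimate on $\mathbb T$, both routine. The same scheme will handle the $L^2(\mathbb R)$ and $H_2(\mathbb T)$ versions referred to later in Section~\ref{5}, with characters replaced by narrow frequency bumps and by one-sided characters respectively.
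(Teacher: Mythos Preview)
Your proof is correct and follows essentially the same approach as the paper: both directions go through Theorem~\ref{2.4} and Plancherel, with (a) $\Rightarrow$ (b) obtained by testing against single characters $\widehat f = \delta_\ell$ and choosing $c_k$ to be signs, and (b) $\Rightarrow$ (a) obtained by the trivial multiplier bound $|m_M(\ell)|\le C$. Your observation that no Conze-type transfer is needed here is exactly the simplification the paper's proof exploits.
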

\begin{proof}  Suppose that $\sum\limits_{k=1}^{\infty}\left(D_{\epsilon_{k+1}} f-D_{\epsilon_{k}} f\right)$ is unconditionally convergent in $L^2$ for all $f$ in $L^2$. Then by Theorem~\ref{2.4}, there is some finite number $C$ such that:

$$
\sup _{M} \sup _{\left|c_{k}\right| \leq 1}\left\|\sum\limits_{k=1}^{M} c_{k}\left(\varphi_{\epsilon_{k+1}} * f-\varphi_{\epsilon_{k}} * f\right)\right\|_{2} \leq C\|f\|_{2} \text { for all } f \text { in } L^2.
$$

Thus

$$
\sup _{M} \sup _{\left|c_{k}\right| \leq 1}\left\{\sum\limits_{\ell \in \mathbb{Z}}\left|\sum\limits_{k=1}^{M} c_{k}\left(\widehat{\varphi}_{\epsilon_{k+1}}(\ell) \widehat{f}(\ell)-\widehat{\varphi}_{\epsilon_{k}}(\ell) \widehat{f}(\ell)\right)\right|^{2}\right\}^{1 / 2} \leq C\left\{\sum\limits_{\ell \in \mathbb{Z}}|\hat{f}(\ell)|^{2}\right\}^{1 / 2}
$$

Fix $\ell_{o}$ and choose $f$ such that

$$
\hat{f}(\ell)= \begin{cases}1, & \text { for } \ell=\ell_{o} \\ 0, & \text { for } \ell \neq \ell_{o}\end{cases}
$$

Then

$$
\sup _{M} \sup _{\left|c_{k}\right| \leq 1}\left|\sum\limits_{k=1}^{M} c_{k}\left(\widehat{\varphi}_{\epsilon_{k+1}}\left(\ell_{o}\right)-\widehat{\varphi}_{\epsilon_{k}}\left(\ell_{o}\right)\right)\right|^{2} \leq C^{2}
$$

Thus we see that if we choose $c_{k}$ equal to the sign of $\left(\widehat{\varphi}_{\epsilon_{k+1}}\left(\ell_{o}\right)-\widehat{\varphi}_{\epsilon_{k}}\left(\ell_{o}\right)\right)$ then letting $M \rightarrow \infty$ gives us

$$
\sum\limits_{k=1}^{\infty}\left|\widehat{\varphi}_{\epsilon_{k+1}}\left(\ell_{o}\right)-\widehat{\varphi}_{\epsilon_{k}}\left(\ell_{o}\right)\right| \leq C
$$

Since the value of $\mathrm{C}$ is independent of how we choose $\ell_{o}$ we see that we are done with this direction of the proof.

Suppose now that

$$
\sup _{\ell \text { in } \mathbb{Z}} \sum\limits_{k=1}^{\infty}\left|\widehat{\varphi}_{\epsilon_{k+1}}(\ell)-\widehat{\varphi}_{\epsilon_{k}}(\ell)\right| \leq C
$$

then

$$
\begin{aligned}
& \sup _{M} \sup _{\left|c_{k}\right| \leq 1} \mid \sum\limits_{k=1}^{M} c_{k}\left(\varphi_{\epsilon_{k+1}} * f-\varphi_{\epsilon_{k}} * f\right) \|_{2} \\
&=\sup _{M} \sup _{\left|c_{k}\right| \leq 1}\left\{\sum\limits_{\ell \in \mathbb{Z}}\left|\sum\limits_{k=1}^{M} c_{k}\left(\widehat{\varphi}_{\epsilon_{k+1}}(\ell) \widehat{f}(\ell)-\widehat{\varphi}_{\epsilon_{k}}(\ell) \widehat{f}(\ell)\right)\right|^{2}\right\}^{1 / 2} \\
&=\sup _{M} \sup _{\left|c_{k}\right| \leq 1}\left\{\sum\limits_{\ell \in \mathbb{Z}}|\hat{f}(\ell)|^{2}\left|\sum\limits_{k=1}^{M} c_{k}\left(\widehat{\varphi}_{\epsilon_{k+1}}(\ell)-\widehat{\varphi}_{\epsilon_{k}}(\ell)\right)\right|^{2}\right\}^{1 / 2} \\
& \leq\left\{\sum\limits_{\ell \in \mathbb{Z}}|\hat{f}(\ell)|^{2}\left(\sum\limits_{k=1}^{\infty}\left|\widehat{\varphi}_{\epsilon_{k+1}}(\ell)-\widehat{\varphi}_{\epsilon_{k}}(\ell)\right|\right)^{2}\right\}^{1 / 2} \\
& \leq C\left\{\sum\limits_{\ell \in \mathbb{Z}}|\hat{f}(\ell)|^{2}\right\}^{1 / 2}=C\|f\|_{2}
\end{aligned}
$$

Thus we see that the proof is complete.
\end{proof}

It is interesting to note that the above characterization for unconditional convergence in $L^2(\mathbb{T})$ is also the characterization for unconditional convergence in $H_{2}(\mathbb{T})$ so that the following theorem holds true:

\begin{thm}\label{5.2}
If $\epsilon_{k}$ is any sequence of positive reals tending to zero, then following are equivalent:

(a) $\quad \sum\limits_{k=1}^{\infty}\left(D_{\epsilon_{k+1}} f-D_{\epsilon_{k}} f\right)$ is unconditionally convergent in $H_{2}(\mathbb{T})$ for all $f$ in $H_{2}(\mathbb{T})$

(b) There is a finite constant $C$ such that $\sup _{\{\ell \text { in } \mathbb{Z}\}} \sum\limits_{k=1}^{\infty}\left|\widehat{\varphi}_{\epsilon_{k+1}}(\ell)-\widehat{\varphi}_{\epsilon_{k}}(\ell)\right| \leq C$
\end{thm}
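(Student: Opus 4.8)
The plan is to mimic the proof of Theorem~\ref{5.1} essentially verbatim, the only new point being to track how passing from $L^2(\mathbb{T})$ to $H_2(\mathbb{T})$ changes the range of Fourier frequencies $\ell$ available for testing. First I would record two preliminary observations. One: $D_\epsilon$ maps $H_2(\mathbb{T})$ into itself, since $\widehat{D_\epsilon f}(\ell)=\widehat{\varphi}_\epsilon(\ell)\widehat{f}(\ell)$ vanishes whenever $\widehat{f}(\ell)$ does; hence every partial sum $\sum_{k=1}^M c_k(D_{\epsilon_{k+1}}f-D_{\epsilon_k}f)$ stays in $H_2(\mathbb{T})$, and Theorem~\ref{2.4} is applicable in this Banach space. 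Two: the multiplier $\widehat{\varphi}_\epsilon(\ell)=\frac{\sin\ell\epsilon}{\ell\epsilon}$ is an \emph{even} function of $\ell$, with $\widehat{\varphi}_\epsilon(0)=1$; consequently $\sup_{\ell\ge 0}\sum_{k=1}^\infty|\widehat{\varphi}_{\epsilon_{k+1}}(\ell)-\widehat{\varphi}_{\epsilon_k}(\ell)|=\sup_{\ell\in\mathbb{Z}}\sum_{k=1}^\infty|\widehat{\varphi}_{\epsilon_{k+1}}(\ell)-\widehat{\varphi}_{\epsilon_k}(\ell)|$, the $\ell=0$ term contributing $0$.

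For the implication (a) $\Rightarrow$ (b): assuming unconditional convergence in $H_2(\mathbb{T})$, Theorem~\ref{2.4} produces a constant $C$ with $\sup_{M,|c_k|\le 1}\|\sum_{k=1}^M c_k(D_{\epsilon_{k+1}}f-D_{\epsilon_k}f)\|_2\le C\|f\|_2$ for all $f\in H_2(\mathbb{T})$. Taking $f$ with $\widehat{f}=\mathbbm{1}_{\{\ell_o\}}$ for an arbitrary $\ell_o\ge 0$ — which does lie in $H_2(\mathbb{T})$ — and choosing $c_k$ to be the sign of $\widehat{\varphi}_{\epsilon_{k+1}}(\ell_o)-\widehat{\varphi}_{\epsilon_k}(\ell_o)$ exactly as in the proof of Theorem~\ref{5.1}, one gets $\sum_{k=1}^\infty|\widehat{\varphi}_{\epsilon_{k+1}}(\ell_o)-\widehat{\varphi}_{\epsilon_k}(\ell_o)|\le C$ for every $\ell_o\ge 0$. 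By the evenness observation above, this upgrades to $\sup_{\ell\in\mathbb{Z}}\sum_{k=1}^\infty|\widehat{\varphi}_{\epsilon_{k+1}}(\ell)-\widehat{\varphi}_{\epsilon_k}(\ell)|\le C$, which is (b).

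For the implication (b) $\Rightarrow$ (a): repeat the Plancherel estimate at the end of the proof of Theorem~\ref{5.1}, except that for $f\in H_2(\mathbb{T})$ the sum over $\ell$ runs only over $\ell\ge 0$. Since $\sup_{\ell\ge 0}\sum_{k=1}^\infty|\widehat{\varphi}_{\epsilon_{k+1}}(\ell)-\widehat{\varphi}_{\epsilon_k}(\ell)|\le\sup_{\ell\in\mathbb{Z}}\sum_{k=1}^\infty|\widehat{\varphi}_{\epsilon_{k+1}}(\ell)-\widehat{\varphi}_{\epsilon_k}(\ell)|\le C$, the same chain of inequalities yields $\|\sum_{k=1}^M c_k(D_{\epsilon_{k+1}}f-D_{\epsilon_k}f)\|_2\le C\|f\|_2$ uniformly in $M$ and $(c_k)$ with $|c_k|\le 1$, and Theorem~\ref{2.4} then gives unconditional convergence of $\sum_{k=1}^\infty(D_{\epsilon_{k+1}}f-D_{\epsilon_k}f)$ in $H_2(\mathbb{T})$ for every $f\in H_2(\mathbb{T})$.

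There is essentially no serious obstacle here. The one step worth flagging is the passage from $\ell\ge 0$ to all of $\mathbb{Z}$ in (a) $\Rightarrow$ (b): it rests precisely on the symmetry of the kernel $\varphi_\epsilon$ (its transform being even in $\ell$). Were one to replace $\varphi_\epsilon$ by a one-sided averaging kernel, this reflection trick would fail and the $H_2(\mathbb{T})$ characterization could genuinely differ from the $L^2(\mathbb{T})$ one — so it is this symmetry, rather than anything about Hardy spaces per se, that makes the two criteria coincide.
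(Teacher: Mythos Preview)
Your proof is correct and follows the same underlying strategy as the paper: use Theorem~\ref{2.4} for the uniform bound, test against single characters $e^{i\ell_o\theta}$ with $\ell_o\ge 0$, invoke the evenness of $\widehat{\varphi}_\epsilon(\ell)$ to pass from $\ell\ge 0$ to all of $\mathbb{Z}$, and run the Plancherel estimate for the converse.

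The one packaging difference worth noting is that the paper treats $H_2(\mathbb{T})$ as holomorphic functions on the disc and goes through the boundary-value map $f\mapsto f^*$: it proves a separate Lemma~\ref{5.3} establishing $(D_\epsilon f)^* = D_\epsilon(f^*)$, and then uses the isometry $\|f\|_{H_2}=\|f^*\|_{L^2}$ together with Theorem~\ref{5.1} to transfer the $L^2(\mathbb{T})$ result to $H_2$. You instead identify $H_2(\mathbb{T})$ with its boundary realization (the closed subspace of $L^2(\mathbb{T})$ with $\widehat{f}(\ell)=0$ for $\ell<0$) from the outset, so that $D_\epsilon$ visibly acts as the Fourier multiplier $\widehat{\varphi}_\epsilon(\ell)$ and no separate commutation lemma is needed. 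Your route is more economical; the paper's route makes the analytic-function interpretation of $D_\epsilon$ on the disc explicit. Both arrive at the same place via the same essential steps.
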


A few clarifications of the notation used are made now before the proof of Theorem~\ref{5.2} is presented.  Henceforth in this paper, the reader should interpret $L^2$ and $H_{2}$ to mean $L^2(\mathbb{T})$ and $H_{2}(\mathbb{T})$ respectively. For any $f$ in $H_{2}$, we associate $f^{*}$ in $L^2$ to be the nontangential limit of $f$ such that $f=P\left[f^{*}\right]$. We will also need to recall that $\|f\|_{H_{2}}=\left\|f^{*}\right\|_{L^2}$

It will also be convenient to use the following facts:

\begin{gather*}
\text { All } f \text { in } H_{2} \text { can be expressed as } \sum\limits_{n=0}^{\infty} a_{n} z^{n} \text { with } \sum\limits_{n=0}^{\infty}\left|a_{n}\right|^{2}<\infty  \tag{5.1}\\
\text { If } f(z)=\sum\limits_{n=0}^{\infty} a_{n} z^{n} \text { then } f^{*}\left(e^{i \theta}\right)=\sum\limits_{n=0}^{\infty} a_{n} e^{i n \theta} \tag{5.2}
\end{gather*}

It will also be convenient to use the following:

\begin{lem}\label{5.3}  
If $f \in H_{2}, \rho \leq 1, z=\rho e^{i \theta}$, and

$$
D_{\epsilon} f(z) \equiv \frac{1}{2 \epsilon} \int_{-\epsilon}^{+\epsilon} f\left(z e^{-i x}\right) d x
$$

then

$$
\left(D_{\epsilon} f\right)^{*}\left(e^{i \theta}\right)=D_{\epsilon}\left(f^{*}\right)\left(e^{i \theta}\right)
$$
\end{lem}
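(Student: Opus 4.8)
The plan is to reduce everything to the power series expansion of $f$ together with the Fourier-multiplier description of $D_\epsilon$ recorded just before Theorem~\ref{5.1}. First I would write $f(z)=\sum_{n=0}^{\infty}a_{n}z^{n}$ with $\sum_{n=0}^{\infty}|a_{n}|^{2}<\infty$, using (5.1). For fixed $z$ with $|z|=\rho<1$, Cauchy--Schwarz gives $\sum_{n}|a_{n}||z|^{n}<\infty$, so the series $f(ze^{-ix})=\sum_{n}a_{n}z^{n}e^{-inx}$ converges absolutely and uniformly in $x\in[-\epsilon,\epsilon]$ (its terms are dominated by $|a_n|\rho^n$, independent of $x$). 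Hence I may interchange summation and integration in the definition of $D_\epsilon f(z)$ to obtain
\[
D_\epsilon f(z)=\sum_{n=0}^{\infty}a_{n}z^{n}\Bigl(\frac{1}{2\epsilon}\int_{-\epsilon}^{+\epsilon}e^{-inx}\,dx\Bigr)=\sum_{n=0}^{\infty}a_{n}\widehat{\varphi_{\epsilon}}(n)z^{n},
\]
where $\widehat{\varphi_{\epsilon}}(n)=\frac{\sin n\epsilon}{n\epsilon}$ (interpreted as $1$ when $n=0$).

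Next I would note that, since $|\widehat{\varphi_{\epsilon}}(n)|\le 1$ for all $n$, the coefficient sequence $(a_{n}\widehat{\varphi_{\epsilon}}(n))$ is again square-summable, so $D_\epsilon f$ is again an element of $H_{2}$, and by (5.2) its nontangential boundary function is
\[
(D_\epsilon f)^{*}(e^{i\theta})=\sum_{n=0}^{\infty}a_{n}\widehat{\varphi_{\epsilon}}(n)e^{in\theta},
\]
the series converging in $L^2$. On the other side, by (5.2) we have $f^{*}(e^{i\theta})=\sum_{n=0}^{\infty}a_{n}e^{in\theta}$ in $L^2$, so $\widehat{f^{*}}(n)=a_{n}$ for $n\ge 0$ and $\widehat{f^{*}}(n)=0$ for $n<0$. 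Since $D_\epsilon$ acts on $L^2(\mathbb{T})$ as the Fourier multiplier $\widehat{\varphi_{\epsilon}}$, we get $\widehat{D_\epsilon(f^{*})}(n)=\widehat{\varphi_{\epsilon}}(n)\widehat{f^{*}}(n)$, which equals $a_{n}\widehat{\varphi_{\epsilon}}(n)$ for $n\ge 0$ and $0$ for $n<0$. Therefore $D_\epsilon(f^{*})(e^{i\theta})=\sum_{n=0}^{\infty}a_{n}\widehat{\varphi_{\epsilon}}(n)e^{in\theta}$ in $L^2$ as well, and comparing with the previous display yields $(D_\epsilon f)^{*}=D_\epsilon(f^{*})$ as elements of $L^2$, hence almost everywhere.

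The one place needing genuine care — and the step I expect to be the main (modest) obstacle — is the bookkeeping around which objects are being asserted to coincide: justifying the term-by-term integration by the uniform estimate above, confirming that the power series $\sum a_{n}\widehat{\varphi_{\epsilon}}(n)z^{n}$ really is the $H_{2}$ function whose $*$-limit is computed by (5.2), and confirming that $\varphi_{\epsilon}*f^{*}$ has the stated Fourier coefficients. Each of these is routine given $\sum|a_{n}|^{2}<\infty$ and $|\widehat{\varphi_{\epsilon}}(n)|\le 1$, but I would spell out the Cauchy--Schwarz/dominated-convergence justification explicitly rather than leave it implicit.
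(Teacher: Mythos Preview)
Your proof is correct, but it proceeds differently from the paper's. The paper uses the Poisson integral representation $f(z)=\frac{1}{2\pi}\int_{-\pi}^{\pi}\frac{e^{it}+z}{e^{it}-z}f^{*}(e^{it})\,dt$ and a single application of Fubini's theorem to show directly that $D_{\epsilon}f$ is the Poisson extension of $D_{\epsilon}(f^{*})$, from which the boundary identity follows at once. You instead work entirely at the level of power-series coefficients, computing $D_{\epsilon}f(z)=\sum_{n}a_{n}\widehat{\varphi_{\epsilon}}(n)z^{n}$ and then matching Fourier coefficients on the boundary via (5.1), (5.2) and the multiplier description of $D_{\epsilon}$ on $L^{2}(\mathbb{T})$. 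Your route is slightly longer but arguably more self-contained, since it uses only the facts (5.1) and (5.2) already laid out in the paper and avoids invoking the Poisson kernel; the paper's argument, on the other hand, is a one-line Fubini computation that works verbatim for any convolution operator on $\mathbb{T}$, not just the specific multiplier $\widehat{\varphi_{\epsilon}}$.
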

\begin{proof}

Since

$$
f(z)=\frac{1}{2 \pi} \int_{-\pi}^{+\pi} \frac{e^{i t}+z}{e^{i t}-z} f^{*}\left(e^{i t}\right) d t
$$

we see that a quick application of Fubini's Theorem gives us:

$$
\begin{aligned}
D_{\epsilon} f(z) & =\frac{1}{2 \pi} \int_{-\pi}^{\pi} \frac{e^{i u}+z}{e^{i u}-z}\left[\frac{1}{2 \epsilon} \int_{-\epsilon}^{+\epsilon} f^{*}\left(e^{i(u-x)}\right) d x\right] d u \\
& =\frac{1}{2 \pi} \int_{-\pi}^{+\pi} \frac{e^{i u}+z}{e^{i u}-z} D_{\epsilon}\left(f^{*}\right)\left(e^{i u}\right) d u
\end{aligned}
$$
\end{proof}

We may now proceed to the proof of Theorem~\ref{5.2}
\begin{proof}[Proof of Theorem~\ref{5.2}]

Suppose we have

$$
\sup _{\ell} \sum\limits_{k=1}^{\infty}\left|\widehat{\phi}_{\epsilon_{k+1}}(\ell)-\widehat{\phi}_{\epsilon_{k}}(\ell)\right|<\infty
$$

Then by Theorem~\ref{5.1}, we know that we have unconditional convergence in $L^2$. That is, we have:

$$
\sup _{M} \sup _{\left|c_{k}\right| \leq 1}\left\|\sum\limits_{k=1}^{M} c_{k}\left(D_{\epsilon_{k+1}}(f)-D_{\epsilon_{k}}(f)\right)\right\|_{2} \leq C\|f\|_{2} \text { for all } f \in L^2
$$

However, this implies that we have:

$$
\sup _{M} \sup _{\left|c_{k}\right| \leq 1}\left\|\sum\limits_{k=1}^{M} c_{k}\left(D_{\epsilon_{k+1}}\left(f^{*}\right)-D_{\epsilon_{k}}\left(f^{*}\right)\right)\right\|_{2} \leq C\left\|f^{*}\right\|_{2} \text { for all } f \in H_{2}
$$

Which, by the lemma gives us:

$$
\sup _{M} \sup _{\left|c_{k}\right| \leq 1}\left\|\sum\limits_{k=1}^{M} c_{k}\left(\left\{D_{\epsilon_{k+1}}(f)\right\}^{*}-\left\{D_{\epsilon_{k}}(f)\right\}^{*}\right)\right\|_{2} \leq C\left\|f^{*}\right\|_{2} \text { for all } f \in H_{2}
$$

Thus since for any $h$ in $H_{2},\|h\|_{H_{2}}=\left\|h^{*}\right\|_{L^2}$, we see that we have:

$$
\sup _{M} \sup _{\left|c_{k}\right| \leq 1}\left\|\sum\limits_{k=1}^{M} c_{k}\left(D_{\epsilon_{k+1}}(f)-D_{\epsilon_{k}}(f)\right)\right\|_{2} \leq C\|f\|_{2} \text { for all } f \in H_{2}
$$

Thus we have unconditional convergence in $\mathrm{H}_{2}$ and this part of the proof is done.

Suppose now that we have $\sum\limits_{k=1}^{\infty}\left(D_{\epsilon_{k+1}} f-D_{\epsilon_{k}} f\right)$ is unconditionally convergent in $H_{2}(\mathbb{T})$ for all $f$ in $H_{2}(\mathbb{T})$

At this point we will need to recall the fact that all $f$ in $H_{2}$ can be expressed as $\sum\limits_{n=0}^{\infty} a_{n} z^{n}$ with $\sum\limits_{n=0}^{\infty}\left|a_{n}\right|^{2}<\infty$

Since we have unconditional convergence in $H_{2}$, Theorem~\ref{2.4} implies that

$$
\sup _{M} \sup _{\left|c_{k}\right| \leq 1}\left\|\sum\limits_{k=1}^{M} c_{k}\left(D_{\epsilon_{k+1}}(f)-D_{\epsilon_{k}}(f)\right)\right\|_{2} \leq C\|f\|_{2} \text { for all } f \text { in } H_{2}
$$

As before, by the lemma, and by the fact that for any $h \in H_{2},\|h\|_{H_{2}}=\left\|h^{*}\right\|_{L^2}$, we see that we have

$$
\sup _{M} \sup _{\left|c_{k}\right| \leq 1}\left\|\sum\limits_{k=1}^{M} c_{k}\left(D_{\epsilon_{k+1}}\left(f^{*}\right)-D_{\epsilon_{k}}\left(f^{*}\right)\right)\right\|_{2} \leq C\left\|f^{*}\right\|_{2}
$$

for all $f^{*}$ of the form $\sum\limits_{n=0}^{\infty} a_{n} e^{i n \theta}$ with $\sum\limits_{n=0}^{\infty}\left|a_{n}\right|^{2}<\infty$.

Proceeding in a fashion similar to that of the proof of Theorem~\ref{5.1}, we fix $\ell_{o}>0$, then we choose $f^{*}$ such that

$$
\widehat{f^{*}}(\ell)= \begin{cases}1 & \text { for } \ell=\ell_{o} \\ 0 & \text { for } \ell \neq \ell_{o}\end{cases}
$$

(Only in this proof, $\ell_{0}$ must be greater than or equal to zero) It follows therefore that

$$
\sup _{\ell \in \mathbb{N}} \sum\limits_{k=1}^{\infty}\left|\widehat{\varphi}_{\epsilon_{k+1}}(\ell)-\widehat{\varphi}_{\epsilon_{k}}(\ell)\right| \leq C
$$

We recall at this point that $\varphi_{\epsilon}(\ell)=\frac{\sin (\ell \epsilon)}{\ell \epsilon}$. Thus we note that $\varphi_{\epsilon}(-\ell)=\varphi_{\epsilon}(\ell)$ from which it follows that

$$
\sup _{\ell \in \mathbb{Z}} \sum\limits_{k=1}^{\infty}\left|\widehat{\varphi}_{\epsilon_{k+1}}(\ell)-\widehat{\varphi}_{\epsilon_{k}}(\ell)\right| \leq C
$$
\end{proof}

In addition to proving the unconditional convergence characterizations for $L^2(\mathbb{T})$ and $\mathrm{H}_{2}(\mathbb{T})$, we will also state (without proof) the characterization for unconditional convergence in $L^2(\mathbb{R})$.

To work in $\mathbb{R}$, we define

$$
\begin{aligned}
D_{\epsilon}^{R} f & =\frac{1}{2 \epsilon} \int_{-\epsilon}^{\epsilon} f(x-y) d y \\
& =\varphi_{\epsilon}^{R} * f
\end{aligned}
$$

where $\varphi_{\epsilon}^{R}$ is the normalized characteristic function of the the interval $(-\epsilon, \epsilon)$ so that $\widehat{\varphi_{\epsilon}^{R}}(t)=$ $\frac{\sin \epsilon t}{\epsilon t}$.

The theorem can now be stated:

\begin{thm}\label{5.4}
If $\epsilon_{k}$ is any sequence of positive reals tending to zero, then following are equivalent:

(a) $\quad \sum\limits_{k=1}^{\infty}\left(D_{\epsilon_{k+1}}^{R} f-D_{\epsilon_{k}}^{R} f\right)$ is unconditionally convergent in $L^2(\mathbb{R})$ for all $f$ in $L^2(\mathbb{R})$

(b) There is a finite constant $C$ such that $\sup _{\{t \text { in } \mathbb{R}\}} \sum\limits_{k=1}^{\infty}\left|\widehat{\varphi_{\epsilon_{k+1}}^{R}}(t)-\widehat{\varphi_{\epsilon_{k}}^{R}}(t)\right| \leq C$
\end{thm}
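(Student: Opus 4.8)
The plan is to mimic the proof of Theorem~\ref{5.1}, replacing Fourier series on $\mathbb{T}$ and the discrete Parseval identity by the Fourier transform on $\mathbb{R}$ and the Plancherel theorem. The one genuinely new point is that on $\mathbb{T}$ one may test against a function whose transform is the point mass $\1_{\{\ell_0\}}$, whereas on $\mathbb{R}$ no $L^2$ function has a point mass for its transform; so I will instead test against narrow normalized indicators and pass to a limit.

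For $(a)\Rightarrow(b)$: assuming unconditional convergence in $L^2(\mathbb{R})$ for all $f$, Theorem~\ref{2.4} supplies a constant $C$ with $\sup_{M\ge 1}\sup_{|c_k|\le 1}\big\|\sum_{k=1}^M c_k(D^R_{\epsilon_{k+1}}f-D^R_{\epsilon_k}f)\big\|_2\le C\|f\|_2$ for every $f\in L^2(\mathbb{R})$. By Plancherel and $\widehat{D^R_\epsilon f}=\widehat{\varphi^R_\epsilon}\,\widehat f$ this is equivalent to
\[
\int_{\mathbb{R}}\Big|\sum_{k=1}^M c_k\big(\widehat{\varphi^R_{\epsilon_{k+1}}}(t)-\widehat{\varphi^R_{\epsilon_k}}(t)\big)\Big|^2|\widehat f(t)|^2\,dt\le C^2\int_{\mathbb{R}}|\widehat f(t)|^2\,dt .
\]
Fix $t_0\in\mathbb{R}$ and $M\ge 1$. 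Since each $\widehat{\varphi^R_\epsilon}(t)=\frac{\sin\epsilon t}{\epsilon t}$ is real and continuous (with value $1$ at $t=0$), I take $c_k=\mathrm{sgn}\big(\widehat{\varphi^R_{\epsilon_{k+1}}}(t_0)-\widehat{\varphi^R_{\epsilon_k}}(t_0)\big)$ and set $g_M(t)=\sum_{k=1}^M c_k(\widehat{\varphi^R_{\epsilon_{k+1}}}(t)-\widehat{\varphi^R_{\epsilon_k}}(t))$, a continuous function with $g_M(t_0)=\sum_{k=1}^M|\widehat{\varphi^R_{\epsilon_{k+1}}}(t_0)-\widehat{\varphi^R_{\epsilon_k}}(t_0)|$. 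Substituting $\widehat f=(2\eta)^{-1/2}\1_{(t_0-\eta,\,t_0+\eta)}$ into the displayed inequality gives $\frac{1}{2\eta}\int_{t_0-\eta}^{t_0+\eta}|g_M(t)|^2\,dt\le C^2$; letting $\eta\to 0$ and using continuity of $g_M$ yields $|g_M(t_0)|\le C$, i.e.\ $\sum_{k=1}^M|\widehat{\varphi^R_{\epsilon_{k+1}}}(t_0)-\widehat{\varphi^R_{\epsilon_k}}(t_0)|\le C$. Letting $M\to\infty$ and then taking the supremum over $t_0$ gives (b).

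For $(b)\Rightarrow(a)$: given the bound in (b), Plancherel gives, for every $f\in L^2(\mathbb{R})$, every $M$, and every $|c_k|\le 1$,
\[
\Big\|\sum_{k=1}^M c_k\big(D^R_{\epsilon_{k+1}}f-D^R_{\epsilon_k}f\big)\Big\|_2^2=\int_{\mathbb{R}}\Big|\sum_{k=1}^M c_k\big(\widehat{\varphi^R_{\epsilon_{k+1}}}(t)-\widehat{\varphi^R_{\epsilon_k}}(t)\big)\Big|^2|\widehat f(t)|^2\,dt\le C^2\|f\|_2^2 ,
\]
using $|\sum_k c_k a_k|\le\sum_k|a_k|$ inside the integral followed by (b). Theorem~\ref{2.4} then yields that $\sum_{k=1}^\infty(D^R_{\epsilon_{k+1}}f-D^R_{\epsilon_k}f)$ is unconditionally convergent in $L^2(\mathbb{R})$ for all $f$, which is (a).

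The only step needing care — and the reason this is not a verbatim copy of the torus argument — is the recovery of the pointwise estimate at a fixed $t_0$ in $(a)\Rightarrow(b)$: the absence of an $L^2(\mathbb{R})$ function with point-mass transform forces the bump approximation and the limiting argument, which goes through because the finite sum $g_M$ is continuous (equivalently one could invoke the Lebesgue differentiation theorem at a Lebesgue point of $|g_M|^2$). Everything else is a routine transcription of the proof of Theorem~\ref{5.1} with $\sum_{\ell\in\mathbb{Z}}$ replaced by $\int_{\mathbb{R}}$.
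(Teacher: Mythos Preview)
The paper explicitly states Theorem~\ref{5.4} \emph{without proof}, indicating only that it is the $L^2(\mathbb{R})$ analogue of Theorem~\ref{5.1}. Your argument is correct and is precisely the expected adaptation: Plancherel in place of the discrete Parseval identity for both directions, with the one genuinely new ingredient being the bump-function/averaging trick to extract the pointwise bound at a fixed $t_0$ in the direction $(a)\Rightarrow(b)$, since no $L^2(\mathbb{R})$ function has a point mass as its transform. That step is handled cleanly via continuity of the finite sum $g_M$, and the rest is a routine transcription of the torus proof.
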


In addition, the following theorem (in conjuction with Theorem~\ref{5.1} and Theoreme~\ref{5.4}) shows that the criteria for unconditional convergence in $L^2(\mathbb{T})$ is equivalent to the criteria for unconditional convergence in $L^2(\mathbb{R})$ :

\begin{thm}\label{5.5}
If $\epsilon_{k}$ is any sequence of positive reals tending to zero, then the following are equivalent:

(a) There is a finite constant $C$ such that $\sup _{\{\ell \in \mathbb{Z}\}} \sum\limits_{k=1}^{\infty}\left|\widehat{\varphi}_{\epsilon_{k+1}}(\ell)-\widehat{\varphi}_{\epsilon_{k}}(\ell)\right| \leq C$

(b) There is a finite constant $K$ such that $\sup _{\{t \text { in } \mathbb{R}\}} \sum\limits_{k=1}^{\infty}\left|\widehat{\varphi_{\epsilon_{k+1}}^{R}}(t)-\widehat{\varphi_{\epsilon_{k}}^{R}}(t)\right| \leq K$
\end{thm}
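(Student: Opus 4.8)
The plan is to observe that both conditions concern the \emph{same} function sampled on two different sets. Write $\psi_\epsilon(s)=\frac{\sin(\epsilon s)}{\epsilon s}$ (with $\psi_\epsilon(0)=1$), so that $\widehat{\varphi}_\epsilon(\ell)=\psi_\epsilon(\ell)$ for $\ell\in\Z$ and $\widehat{\varphi_\epsilon^R}(t)=\psi_\epsilon(t)$ for $t\in\R$, and set $g_k=\psi_{\epsilon_{k+1}}-\psi_{\epsilon_k}$. Each $g_k$ is entire, real-valued on $\R$, and $\|\psi_\epsilon\|_{L^\infty(\R)}=1$. The implication (b)$\Rightarrow$(a) is immediate, since $\Z\subset\R$ forces $\sup_{\ell\in\Z}\sum_{k=1}^\infty|g_k(\ell)|\le\sup_{t\in\R}\sum_{k=1}^\infty|g_k(t)|$, so one may take $C=K$. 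All the work is in (a)$\Rightarrow$(b): control of the series at the integers must be upgraded to control on all of $\R$, via a ``uniform in the signs'' version of the Bernstein sampling argument already carried out in the proof of Lemma~\ref{2.10}.

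First I would quarantine the finitely many large parameters: since $\epsilon_k\to0$, the set $S=\{k:\max(\epsilon_k,\epsilon_{k+1})\ge 1/2\}$ is finite, and for $k\in S$ the crude bound $|g_k(t)|\le\|\psi_{\epsilon_{k+1}}\|_\infty+\|\psi_{\epsilon_k}\|_\infty=2$ gives $\sum_{k\in S}|g_k(t)|\le 2|S|$ for every $t$. For the remaining indices, fix $t\in\R$ and $M\ge1$, pick signs $c_k\in\{-1,0,1\}$ with $c_kg_k(t)=|g_k(t)|$, and put $\Phi_M=\sum_{k\le M,\,k\notin S}c_kg_k$. Since $\psi_a\in E^{a}$ (being the Fourier transform of $\frac{1}{2a}\1_{(-a,a)}$), each $g_k$ with $k\notin S$ lies in $E^{\sigma_k}$ with $\sigma_k=\max(\epsilon_k,\epsilon_{k+1})<1/2$, so the finite combination $\Phi_M$ lies in $E^{\sigma}$ for some $\sigma<1/2$; also $\|\Phi_M\|_{L^\infty(\R)}<\infty$, being a finite sum of bounded functions.

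Next I would run the Lemma~\ref{2.10} computation on $\Phi_M$ rather than on a single difference. For $t\in\R$ choose an integer $\ell_0$ with $|t-\ell_0|\le1$; Theorem~\ref{2.7} then gives $|\Phi_M(t)|\le|\Phi_M(\ell_0)|+\sup_{x\in\R}|\Phi_M'(x)|\le\sup_{\ell\in\Z}|\Phi_M(\ell)|+\sigma\sup_{x\in\R}|\Phi_M(x)|$, and since $\sigma<1/2$ and $\|\Phi_M\|_\infty<\infty$, rearranging yields $\|\Phi_M\|_{L^\infty(\R)}\le\frac{1}{1-\sigma}\sup_{\ell\in\Z}|\Phi_M(\ell)|<2\sup_{\ell\in\Z}|\Phi_M(\ell)|$. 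By the triangle inequality and hypothesis (a), $\sup_{\ell\in\Z}|\Phi_M(\ell)|\le\sup_{\ell\in\Z}\sum_{k=1}^\infty|g_k(\ell)|\le C$, so $\sum_{k\le M,\,k\notin S}|g_k(t)|=\Phi_M(t)\le\|\Phi_M\|_\infty<2C$. Letting $M\to\infty$ and restoring the $S$-terms gives $\sum_{k=1}^\infty|g_k(t)|\le 2C+2|S|$ uniformly in $t$, which is (b) with $K=2C+2|S|$.

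The step I expect to be the genuine obstacle --- and the reason this is not a one-line corollary of Lemma~\ref{2.10} --- is exactly the coupling of the summands: one cannot bound $\|g_k\|_{L^\infty(\R)}$ termwise and add up, because $\sup_{\ell\in\Z}\sum_k|g_k(\ell)|\le C$ says nothing about $\sum_k\sup_{\ell\in\Z}|g_k(\ell)|$, the suprema potentially sitting at different integers. Applying the sampling inequality to the sign-optimized partial sum $\Phi_M$ as a single function repairs this, but forces one to control the exponential type of $\Phi_M$ and hence to excise in advance the finite set $S$ where $\epsilon_k$ is not already small --- the factor $\frac{1}{1-\sigma}$ blows up as $\sigma\uparrow1$, so $\sigma$ must be kept bounded away from $1$. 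Alternatively one could invoke the band-limited sampling theorems of Timan~\cite{Timan} or Cartwright~\cite{Cartwright} in place of re-deriving the estimate $\|\Phi_M\|_{L^\infty(\R)}\le c\,\sup_{\ell\in\Z}|\Phi_M(\ell)|$.
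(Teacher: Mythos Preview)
Your proof is correct and follows essentially the same strategy as the paper's: excise the finitely many indices where the exponential type is too large, then apply a sampling theorem to the sign-optimized partial sum $\Phi_M$ viewed as a single entire function of small type. The only cosmetic difference is that the paper quotes Cartwright's theorem (Theorem~\ref{5.6}, type $<\pi$) for the step $\sup_{x\in\R}|\Phi_M(x)|\le K\sup_{\ell\in\Z}|\Phi_M(\ell)|$, whereas you rederive this inequality from Bernstein's inequality exactly as in Lemma~\ref{2.10} (forcing the slightly stricter threshold $\sigma<1$); you even note the Cartwright/Timan alternative yourself. Your write-up is in fact more explicit than the paper's about the crucial sign choice $c_k$ and why the argument must be run on $\Phi_M$ rather than termwise.
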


One direction of the proof is trivial since the supremum is being taken over a larger set.

The other direction can be proved using the second theorem in Cartwright~\ref{Cartwright}.  Her result can also be found in section 4.3.3 of Timan~\cite{Timan}, page 186. Timan uses the word 'degree' and Cartwright uses the word 'type' to describe the same thing. One of the definitions for type was already described in the comments prior to Theorem~\ref{2.7}. Cartwright also mentions the word 'order'. The order $\rho$ of a function can
either be defined so that $\rho$ is the lower bound of the positive numbers $A$ such that as $|z|=r$ goes to infinity, $f(z)=O\left(e^{r^{A}}\right)$. Thus $e^{z}, e^{5 z}, \sin z$ are all of order one while polynomials are of order zero. Another equivalent definition for the type $\sigma$ is: $\sigma=\lim \sup _{r \rightarrow \infty} \frac{\ln M(r)}{r^{\rho}}$ where $M(r)=\max _{|z|=r}|f(z)|$

At any rate, Cartwright's result can now be stated and understood:

\begin{thm}\label{5.6}  Suppose that $f(z)$ is an integral function of order one and type $k<\pi$, and that

$$
\sup _{\ell \in \mathbb{Z}}|f(n)| \leq A
$$

then

$$
\sup _{x \in \mathbb{R}}|f(x)| \leq K A
$$
\end{thm}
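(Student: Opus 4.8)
The plan is to reconstruct $f$ from its integer samples by an absolutely convergent interpolation series with a rapidly decaying kernel, and then to read the bound off the $\ell^1$-norm of that kernel. Normalize $A=1$, and fix once and for all a function $\Phi\in C_c^\infty(\R)$ with $\operatorname{supp}\Phi\subseteq(-\pi,\pi)$ and $\Phi\equiv 1$ on $[-k,k]$ (possible precisely because $k<\pi$). Put
\[
\psi(t)=\frac1{2\pi}\int_{-\pi}^{\pi}\Phi(\xi)\,e^{it\xi}\,d\xi ,
\]
which is a Schwartz function on $\R$ and an entire function of exponential type $\le\pi$. Since $\psi$ decays faster than any power, the constant $K:=\sup_{x\in\R}\sum_{n\in\Z}|\psi(x-n)|$ is finite and depends only on $k$ (through the choice of $\Phi$). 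If I can prove the sampling identity $f(x)=\sum_{n\in\Z}f(n)\,\psi(x-n)$ for all $x\in\R$, then $|f(x)|\le\sum_n|f(n)|\,|\psi(x-n)|\le K$, which is the stated bound with this $K$.

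To establish the sampling identity I would argue with Fourier transforms. Granting for the moment that $f$ is of at most polynomial growth on $\R$, the Paley--Wiener--Schwartz theorem says that $f$ is a tempered distribution whose Fourier transform $\widehat f$ is supported in $[-k,k]\subset(-\pi,\pi)$. Poisson summation (valid for a tempered distribution with compactly supported transform) gives $\sum_n f(n)e^{-in\xi}=\sum_j\widehat f(\xi+2\pi j)$, and since the translates of $\operatorname{supp}\widehat f$ by the $2\pi j$ are pairwise disjoint, this sum equals $\widehat f(\xi)$ for $\xi\in(-\pi,\pi)$. Multiplying by $\Phi$ --- which is $1$ on $\operatorname{supp}\widehat f$ and is supported in $(-\pi,\pi)$ --- and applying the inverse Fourier transform yields exactly $f(x)=\sum_n f(n)\psi(x-n)$. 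The point that $\Phi\equiv1$ on $\operatorname{supp}\widehat f$ is what removes any periodic ambiguity; this is the analytic content of Carlson's uniqueness theorem, that an entire function of exponential type $<\pi$ which vanishes on $\Z$ and grows at most polynomially on $\R$ must vanish identically.

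The substantive point --- and the step I expect to be the main obstacle --- is precisely the hypothesis just ``granted'': that an entire function of exponential type $k<\pi$ bounded on $\Z$ is automatically of bounded (hence polynomial) growth on $\R$. This is where the strict inequality $k<\pi$ is used in an essential way, and it is a classical theorem of Cartwright; the usual proof runs through her theory of the class of entire functions of exponential type with $\int_{\R}\frac{\log^+|f(x)|}{1+x^2}\,dx<\infty$, using Carleman/Jensen-type estimates to deduce membership in that class from boundedness on the lattice together with type $<\pi$, and then a Phragm\'en--Lindel\"of argument to pass from membership to a pointwise bound. A convenient way to reduce matters to a more standard sampling statement is to replace $f$ by $F(z)=f(z)\bigl(\tfrac{\sin\delta z}{\delta z}\bigr)^{2}$ with $2\delta<\pi-k$: then $F$ has exponential type $<\pi$ and $\sum_n|F(n)|<\infty$, so $F\in L^2(\R)$ (indeed $F$ is bounded on $\R$) by the Plancherel--P\'olya inequality, and dividing back out by $\bigl(\tfrac{\sin\delta z}{\delta z}\bigr)^{2}$, whose double zeros are matched by zeros of $F$, shows $f$ is of at most quadratic growth on $\R$ --- which is all the polynomial control the argument above requires. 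In the present paper, however, it is simplest just to invoke Cartwright's theorem directly, as is done; see Cartwright~\cite{Cartwright} and Section~4.3.3 of Timan~\cite{Timan}.

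Finally, I would note that the restriction $k<\pi$ is not removable: $f(z)=\sin(\pi z)$ has exponential type exactly $\pi$, vanishes at every integer, and is not identically $0$ --- consistently, the constant $K=K(k)$ built above blows up as $k\uparrow\pi$.
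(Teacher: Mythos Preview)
The paper does not give its own proof of this statement: Theorem~5.6 is quoted as Cartwright's classical result, with citations to Cartwright~\cite{Cartwright} and Timan~\cite{Timan}, and is used as a black box in the proof of Theorem~5.5. So there is no ``paper's proof'' to compare against.

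Your proposal is not a self-contained proof either, and you are candid about this. The sampling-formula reduction is correct and standard: once one knows $f$ has at most polynomial growth on $\R$, Paley--Wiener--Schwartz and Poisson summation give $f(x)=\sum_n f(n)\psi(x-n)$ and the bound follows. But the substantive step---that an entire function of type $k<\pi$ bounded on $\Z$ has polynomial growth on $\R$---is exactly the content of Cartwright's theorem, and you end by invoking it, which is circular. Your suggested workaround via $F(z)=f(z)(\sin\delta z/\delta z)^2$ and Plancherel--P\'olya is the right idea in spirit, but as written it still begs the question: the direction of Plancherel--P\'olya you need (samples in $\ell^p$ $\Rightarrow$ function in $L^p$) is itself proved using the same growth-control machinery (Carleman/Jensen estimates, Phragm\'en--Lindel\"of) that underlies Cartwright's theorem. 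You mention these ingredients but do not carry them out.

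In short, your write-up is a correct and informative discussion of what Cartwright's theorem says and why $k<\pi$ is sharp, but it ultimately defers to the same references the paper cites rather than supplying an independent proof. That matches the paper's treatment.
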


The reader can probably now guess how the proof of the other direction will go.

\begin{proof}[Proof of the nontrivial direction of Theorem~\ref{5.5}]

Suppose that

$$
\sup _{\ell \in \mathbb Z} \sum\limits_{k=1}^{\infty}\left|\widehat{\varphi_{\epsilon_{k+1}}}(\ell)-\widehat{\varphi_{\epsilon_{k}}}(\ell)\right| \leq C
$$

Upon noting that $\varphi_{\epsilon_{k+1}}-\varphi_{\epsilon_{k}}$ is supported in the interval $\left(-\epsilon_{k}, \epsilon_{k}\right)$, and recalling the comments prior to Lemma 2.8 it follows that whenever $M>K_{o}>0$, the sum $\sum\limits_{k=K_{o}+1}^{M}\left(\widehat{\varphi_{k+1}}-\right.$ $\left.\widehat{\varphi_{\epsilon_{k}}}\right)$ will be in $E^{\epsilon_{K_{o}}+1}$.

Let $K_{o}$ be the first $k$ such that $\epsilon_{k}<\pi$, then for all $M>K_{o}>0$,

$$
\begin{aligned}
\sum\limits_{k=1}^{M}\left|\widehat{\varphi_{\epsilon_{k+1}}^{R}}(t)-\widehat{\varphi_{\epsilon_{k}}^{R}}(t)\right| & \leq \sum\limits_{k=1}^{K_{o}}\left|\widehat{\varphi_{\epsilon_{k+1}}^{R}}(t)-\widehat{\varphi_{\epsilon_{k}}^{R}}(t)\right|+\sum\limits_{k=K_{o}+1}^{M}\left|\widehat{\varphi_{\epsilon_{k+1}}^{R}}(t)-\widehat{\varphi_{\epsilon_{k}}^{R}}(t)\right| \\
& \leq \sum\limits_{k=1}^{K_{o}}\left(\left|\widehat{\varphi_{\epsilon_{k+1}}^{R}}(t)\right|+\left|\widehat{\varphi_{\epsilon_{k}}^{R}}(t)\right|\right)+K C \\
& \leq K_{o}\left(\left\|\varphi_{\epsilon_{k+1}}^{R}\right\|_{1}+\left\|\varphi_{\epsilon_{k}}^{R}\right\|_{1}\right)+K C \\
& =2 K_{o}+K C
\end{aligned}
$$

Since $K$ is dependent only on $K_{o}$, (and not on M), the desired result is obtained.
\end{proof}

It will be shown in the two examples below that when $\epsilon_{k}=1 / 2^{k}$ the characterization of unconditional convergence in Theorem~\ref{5.2} and Theorem~\ref{5.1} holds true. This is not a surprise since, this is what happens in the case of the ergodic averages. Furthermore, the characterization for unconditional convergence in Theorem~\ref{5.2} and Theorem~\ref{5.1} fails however when $\epsilon_{k}=1 / k$. Thus there seems to be some sort of analogy between derivatives and averages. (See also and compare with Example~\ref{3.3} and Example~\ref{3.2}.

But before we give the example, we need to prove the following:

\begin{lem}\label{5.6.1}
(i) If $x<y<1$ then $\left|\frac{\sin x}{x}-\frac{\sin y}{y}\right|=\frac{\sin x}{x}-\frac{\sin y}{y}$

(ii) If $\ell / 2^{k}<1$ then $\frac{\sin \ell / 2^{k+1}}{\ell / 2^{k+1}}-\frac{\sin \ell / 2^{k}}{\ell / 2^{k}} \leq \frac{\ell^{2}}{3 ! 4^{k}}$ and
\end{lem}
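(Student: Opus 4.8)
The plan is to treat the two parts independently, both by elementary one--variable calculus.

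For part (i), I would show that $g(t)=\sin t/t$ is strictly decreasing on $(0,1)$ --- indeed on all of $(0,\pi)$ --- and then read off the conclusion from monotonicity. Writing $g'(t)=(t\cos t-\sin t)/t^{2}$, it is enough to see that the numerator $h(t)=t\cos t-\sin t$ is negative for $0<t<\pi$; this holds because $h(0)=0$ while $h'(t)=-t\sin t<0$ on $(0,\pi)$. Hence $g'<0$ on $(0,1)$, so $0<x<y<1$ forces $g(x)>g(y)$, i.e. $g(x)-g(y)>0$, which is precisely the claim $\left|\frac{\sin x}{x}-\frac{\sin y}{y}\right|=\frac{\sin x}{x}-\frac{\sin y}{y}$.

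For part (ii), we may assume $\ell\ge 1$ (the quantity is even in $\ell$), and set $a=\ell/2^{k+1}$ and $b=\ell/2^{k}=2a$, so that $0<a<b<1$ by the hypothesis $\ell/2^{k}<1$. I would invoke the two standard estimates valid for $t\ge 0$, namely $\sin t\le t$ and $\sin t\ge t-t^{3}/6$; the first gives $\sin a/a\le 1$ and the second gives $\sin b/b\ge 1-b^{2}/6$. Subtracting,
\[
\frac{\sin a}{a}-\frac{\sin b}{b}\ \le\ 1-\Bigl(1-\frac{b^{2}}{6}\Bigr)=\frac{b^{2}}{6}=\frac{1}{3!}\Bigl(\frac{\ell}{2^{k}}\Bigr)^{2}=\frac{\ell^{2}}{3!\,4^{k}},
\]
which is exactly the asserted bound. (An alternative route is to expand $\frac{\sin a}{a}-\frac{\sin 2a}{2a}=\sum_{n\ge 1}(-1)^{n+1}\frac{(4^{n}-1)a^{2n}}{(2n+1)!}$ and note that for $0<a<1/2$ the terms decrease in absolute value, so the sum is at most its first term $a^{2}/2=\ell^{2}/(2\cdot 4^{k+1})\le \ell^{2}/(3!\,4^{k})$.)

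I do not anticipate any real obstacle. The only points needing a line of justification are the two nested elementary sub-claims: the sign of $h$ in part (i), and the cubic lower bound $\sin t\ge t-t^{3}/6$ used in part (ii), the latter again following by the same template --- with $p(t)=\sin t-t+t^{3}/6$ one has $p(0)=p'(0)=0$ and $p''(t)=t-\sin t\ge 0$, so $p\ge 0$ on $[0,\infty)$. The one thing to stay careful about is the bookkeeping of the substitution $b=\ell/2^{k}$, so that $b^{2}/6$ lands exactly on $\ell^{2}/(3!\,4^{k})$ rather than on a power of $4^{k+1}$.
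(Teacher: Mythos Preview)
Your proof is correct and follows essentially the same approach as the paper. For part (i) both arguments show $\frac{\sin t}{t}$ is decreasing by checking the sign of the numerator $t\cos t-\sin t$ (the paper invokes $\tan x>x$ where you differentiate $h$), and for part (ii) both use the elementary bounds $\sin t\le t$ and $\sin t\ge t-t^{3}/6$ in exactly the way you describe.
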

\begin{proof}

(i) If we let $f(x)=\frac{\sin x}{x}$, we see that $f^{\prime}(x)=\frac{x \cos x-\sin x}{x^{2}}$ But $\operatorname{since} \tan x>x$ for $x<\pi / 2$, we see that $f^{\prime}(x)<0$ for $x<\pi / 2 \ldots$ Since $x<y<1<\pi / 2$, and since $\lim _{x \rightarrow 0} \frac{\sin x}{x}=1$, we see that (i) follows

(ii) If $\theta<1$, we know that $\theta-\theta^{3} / 3 ! \leq \sin \theta \leq \theta$ thus

$$
\frac{\sin \ell / 2^{k+1}}{\ell / 2^{k+1}}-\frac{\sin \ell / 2^{k}}{\ell / 2^{k}} \leq 1-\frac{\ell / 2^{k}-\frac{\ell^{3}}{32^{3 k}}}{\ell / 2^{k}}=\frac{\ell^{2}}{3 ! 4^{k}}
$$
\end{proof}
\medskip

\begin{exmp}~\label{5.7} Here we show that $\epsilon_{k}=1 / 2^{k}$ is good in the sense that unconditional convergence occurs according to the criterion of Theorem~\ref{5.1} and Theorem~\ref{5.2}.

We wish to show that

$$
\sup _{\ell} \sum\limits_{k=1}^{\infty}\left|\widehat{\varphi}_{\epsilon_{k+1}}(\ell)-\widehat{\varphi}_{\epsilon_{k}}(\ell)\right| \leq C
$$

Or, in other words, since

$$
\widehat{\varphi_{\epsilon}}(\ell)=\frac{\sin \ell \epsilon}{\ell \epsilon}
$$

we see that it suffices to show that

$$
\sup _{\ell} \sum\limits_{k=1}^{\infty} d_{k}(\ell)<\infty
$$

where

$$
d_{k}(\ell)=\left|\frac{\sin \ell / 2^{k+1}}{\ell / 2^{k+1}}-\frac{\sin \ell / 2^{k}}{\ell / 2^{k}}\right|
$$

By part (ii) of the claim, we see that

$$
\begin{aligned}
\sum\limits_{k \geq\left[\log _{2} \ell\right]+1} d_{k}(\ell) & \leq \frac{\ell^{2}}{3 !} \sum\limits_{k \geq\left[\log _{2} \ell\right]+1} \frac{1}{4^{k}} 
 =\frac{\ell^{2}}{3 !} \frac{4^{\left[\log _{2} \ell\right]+1}}{1-1 / 4} \\
& =\frac{\ell^{2}}{3 ! 3} \frac{1}{4^{\left[\log _{2} \ell\right]}} 
 \leq \frac{\ell^{2}}{3 ! 3} \frac{1}{2^{2\left(\log _{2} \ell-1\right)}} 
 =\frac{4 \ell^{2}}{3 ! 3} \frac{1}{2^{\log _{2} \ell^{2}}} 
 =2 / 9
\end{aligned}
$$

Furthermore,

$$
\begin{aligned}
\sum\limits_{k \leq\left[\log _{2} \ell\right]} d_{k}(\ell) & \leq \sum\limits_{k \leq\left[\log _{2} \ell\right]} \frac{2}{\ell / 2^{k+1}} 
=\frac{4}{\ell} \sum\limits_{k \leq\left[\log _{2} \ell\right]} 2^{k} \\
& =\frac{4}{\ell} \frac{\left(2^{\left[\log _{2} \ell\right]+1}-2\right)}{2-1} 
 \leq \frac{8}{\ell} 2^{\left[\log _{2} \ell\right]} 
 \leq \frac{8}{\ell} 2^{\log _{2} \ell}=8
\end{aligned}
$$

Thus we see that we're done with the proof that we have unconditional convergence when $\epsilon_{k}=1 / 2^{k}$

\qed
\end{exmp}
\medskip

\begin{exmp}~\label{5.8} Consider the case when $\epsilon_{k}=1 / k$. It turns out that we do indeed have that

$$
\sup _{\ell} \sum\limits_{k=1}^{\infty}\left|\frac{\sin \left(\epsilon_{k+1} \ell\right)}{\epsilon_{k+1} \ell}-\frac{\sin \left(\epsilon_{k} \ell\right)}{\epsilon_{k} \ell}\right|=\infty
$$

and hence we do not have unconditional convergence.

$$
\begin{aligned}
\sup _{\ell} \sum\limits_{k=1}^{\infty}\left|\frac{\sin \left(\epsilon_{k} \ell\right)}{\epsilon_{k} \ell}-\frac{\sin \left(\epsilon_{k+1} \ell\right)}{\epsilon_{k+1} \ell}\right| & =\sup _{\ell} \sum\limits_{k=1}^{\infty}\left|\frac{\sin \left(\frac{\ell}{k}\right)}{\frac{\ell}{k}}-\frac{\sin \left(\frac{\ell}{k+1}\right)}{\frac{\ell}{k+1}}\right| \\
& \geq \sup _{\ell} \sum\limits_{k=\left[\ell^{\alpha}\right]}^{\ell}\left|\frac{\sin \left(\frac{\ell}{k}\right)}{\frac{\ell}{k}}-\frac{\sin \left(\frac{\ell}{k+1}\right)}{\frac{\ell}{k+1}}\right| \quad(\text { where } 0<\alpha<1) \\
& =\sup _{\ell} \sum\limits_{k=\left[\ell^{\alpha}\right]}^{\ell}\left|\frac{k \sin \left(\frac{\ell}{k}\right)-k \sin \left(\frac{\ell}{k+1}\right)}{\ell}-\frac{\sin \left(\frac{\ell}{k+1}\right)}{\ell}\right| \\
& =\sup _{\ell} \sum\limits_{k=\left[\ell^{\alpha}\right]}^{\ell}\left|\frac{k \cos \xi(k, \ell)\left(\frac{\ell}{k}-\frac{\ell}{k+1}\right)}{\ell}-\frac{\sin \left(\frac{\ell}{k+1}\right)}{\ell}\right| \\
& =\sup _{\ell} \sum\limits_{k=\left[\ell^{\alpha}\right]}^{\ell}\left|\frac{\cos (\xi(k, \ell))}{k+1}-\frac{\sin \left(\frac{\ell}{k+1}\right)}{\ell}\right| \\
& \geq \sup _{\ell} \sum\limits_{k=\left[\ell^{\alpha}\right]}^{\ell}\left(\frac{|\cos (\xi(k, \ell))|}{k+1}-\frac{1}{\ell}\right) \\
& =\sup _{\ell}\left\{\left(\sum\limits_{k=\left[\ell^{\alpha}\right]}^{\ell} \frac{|\cos (\xi(k, \ell))|}{k+1}\right)-\frac{\ell-\left[\ell^{\alpha}\right]}{\ell}\right\} \\
& \geq \sup _{\ell} \sum\limits_{k=\left[\ell^{\alpha}\right]}^{\ell} \frac{|\cos (\xi(k, \ell))|}{k+1}-1
\end{aligned}
$$

Thus we see that it suffices to show that

$$
\sup _{\ell} \sum\limits_{k=\left[\ell^{\alpha}\right]}^{\ell} \frac{|\cos (\xi(k, \ell))|}{k+1}=\infty
$$

We note that $|\cos (\xi(k, \ell))|$ is near 1 when $\xi(k, \ell)$ is near some multiple of $\pi$. We now

proceed to compare

$$
\sum\limits_{k=\left[\ell^{\alpha}\right]}^{\ell} \frac{|\cos (\xi(k, \ell))|}{k+1}
$$

with the harmonic series by making the following observations:

There is some positive constant $C$ (which is approximately equal to $1 / \pi$ ) such that there are at least $C\left(\ell /\left[\ell^{\alpha}\right]\right)$ values of $\mathbf{n}$ such that

\begin{equation*}
\frac{\pi}{2}<\pi n-\frac{\pi}{3}<\pi n+\frac{\pi}{3}<\frac{\ell}{\left[\ell^{\alpha}\right]} \tag{5.3}
\end{equation*}

We note that if

$$
\pi n-\frac{\pi}{3}<\frac{\ell}{k+1}<\frac{\ell}{k}<\pi n+\frac{\pi}{3}
$$

then

\begin{equation*}
|\cos (\xi(k, \ell))| \geq \cos \left(\frac{\pi}{3}\right)>0 \tag{5.4}
\end{equation*}

Next, we fix some value of $n$, and estimate the number of times that $\pi n-\frac{\pi}{3}<\xi(k, \ell)<\pi n+\frac{\pi}{3}$ occurs. But for $\pi n-\frac{\pi}{3}<\xi(k, \ell)<\pi n+\frac{\pi}{3}$ to occur, it is necessary that

$$
\pi n-\frac{\pi}{3}<\frac{\ell}{k+1}<\frac{\ell}{k}<\pi n+\frac{\pi}{3} \text { holds. }
$$

Or, in other words, we must have

$$
k+1 \leq \frac{\ell}{\pi n-\frac{\pi}{3}} \quad \text { and } \quad k \geq \frac{\ell}{\pi n+\frac{\pi}{3}}
$$

which implies that there exists some positive constant D such that the number of times that $\pi n-\frac{\pi}{3}<\xi(k, \ell)<\pi n+\frac{\pi}{3}$ occurs is approximately

\begin{align*}
\frac{\ell}{\pi n-\frac{\pi}{3}}-\frac{\ell}{\pi n+\frac{\pi}{3}} & =\frac{\frac{2 \pi}{3} \ell}{(\pi n)^{2}-\frac{\pi^{2}}{9}}  \tag{5.5}\\
& \geq \frac{D \ell}{n^{2}}
\end{align*}

We also notice that when $\pi n-\frac{\pi}{3}<\xi(k, \ell)<\pi n+\frac{\pi}{3}$, it follows from the definition of $\xi(k, \ell)$ that there is some positve constant $\mathrm{F}(\pi / 2$ would work) such that

\begin{equation*}
\frac{1}{k} \geq F \frac{n}{\ell} \text { for any } n, k \text { or } \ell \tag{5.6}
\end{equation*}

Let $U=\left\{n:\left[\frac{\pi}{2}<\pi n-\frac{\pi}{3}<\pi n+\frac{\pi}{3}<\frac{\ell}{\left[\ell^{\ell}\right]}\right\}\right.$ and $V(n)=\left\{k: \pi n-\frac{\pi}{3}<\xi(k, \ell)<\pi n+\frac{\pi}{3}\right\}$

We see that

$$
\begin{aligned}
\sum\limits_{k=\left[\ell^{\alpha}\right]}^{\ell} \frac{|\cos (\xi(k, \ell))|}{k+1} & \geq \sum\limits_{\{n: n \text { is in } U\}} \sum\limits_{\{k: k \text { is in } V(n)\}} \frac{\cos (\xi(k, l))}{k} \\
& \geq \cos \left(\frac{\pi}{3}\right) \sum\limits_{\{n: n \text { is in } U\}\{k: k \text { is in } V(n)\}} \frac{1}{k} \\
& \geq \cos \left(\frac{\pi}{3}\right) \sum\limits_{\{n: n \text { is in } U\}\{k: k \text { is in } V(n)\}} F \frac{n}{\ell}(\text { by inequality (5.6)) } \\
& \geq \cos \left(\frac{\pi}{3}\right) \sum\limits_{\{n: n \text { is in } U\}}\left(\frac{D \ell}{n^{2}}\right)\left(\frac{F n}{\ell}\right) \\
& =D F \cos \left(\frac{\pi}{3}\right) \sum\limits_{\{n: n \text { is in } U\}} \frac{1}{n} \\
& =D F \cos \left(\frac{\pi}{3}\right) \sum\limits_{n=1}^{C \ell /\left[\ell^{\alpha}\right]} \frac{1}{n}
\end{aligned}
$$

which is greater than or equal to

$$
D F \cos \left(\frac{\pi}{3}\right) \ln \ell^{1-\alpha}
$$

Thus taking the supremum over all $\ell$ gives the desired result.
\qed
\end{exmp}
\medskip

It turns out that if we replace $\varphi_{\epsilon}$ by something smoother at the edges (namely $\psi_{\epsilon}=$ $\varphi_{\epsilon} * \varphi_{\epsilon}$ ) then we have unconditional convergence in Theorem~\ref{5.1} and Theorem~\ref{5.2} for all $\epsilon_{k}$ going to zero.

But before stating and proving the theorem, we note that:

$$
\begin{aligned}
\left(\varphi_{\epsilon} * \varphi_{\epsilon}\right)\left(e^{i x}\right) & =\frac{1}{4 \epsilon^{2}} \int_{-\epsilon}^{+\epsilon} 1_{[-\epsilon, \epsilon]}(x-y) d y \\
& =\frac{m([x-\epsilon, x+\epsilon] \cap[-\epsilon,+\epsilon])}{4 \epsilon^{2}} \\
& = \begin{cases}\frac{1}{4 \epsilon^{2}} x+\frac{1}{2 \epsilon} & \text { when }-2 \epsilon<x \leq 0 \\
\frac{-1}{4 \epsilon^{2}} x+\frac{1}{2 \epsilon} & \text { when } 0 \leq x<2 \epsilon \\
0 & \text { otherwise }\end{cases}
\end{aligned}
$$

The theorem is as follows

\begin{thm}\label{5.9} If $\epsilon_{1} \geq \epsilon_{2} \geq \epsilon_{3} \ldots$, with $\epsilon_{k}$ approaching $0$, then

$$
\sup _{\ell} \sum\limits_{k=1}^{\infty}\left|\widehat{\psi}_{\epsilon_{k}}(\ell)-\widehat{\psi}_{\epsilon_{k+1}}(\ell)\right|<\infty
$$
\end{thm}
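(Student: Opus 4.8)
The plan is to reduce everything to a single elementary fact about the function $g(x) = \left(\frac{\sin x}{x}\right)^2$ (with $g(0) := 1$), namely that $g$ has \emph{finite total variation on $[0,\infty)$}. First I would note that since $\psi_\epsilon = \varphi_\epsilon * \varphi_\epsilon$, the Fourier transform multiplies: $\widehat{\psi}_\epsilon(\ell) = \widehat{\varphi}_\epsilon(\ell)^2 = g(\ell\epsilon)$ for every $\ell \in \Z$. Thus Theorem~\ref{5.9} is precisely the assertion
\[
\sup_{\ell \in \Z} \sum_{k=1}^\infty \bigl| g(\ell\epsilon_k) - g(\ell\epsilon_{k+1}) \bigr| < \infty ,
\]
and the case $\ell = 0$ is trivial (all terms vanish), while $g$ is even, so it suffices to treat $\ell \geq 1$.

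Next I would establish the total-variation bound. The function $g$ is $C^\infty$ on $[0,\infty)$, and a direct computation gives
\[
g'(x) = \frac{2\sin x\,(x\cos x - \sin x)}{x^3}.
\]
Hence $|g'(x)| \leq 4/x^2$ for all $x \geq 1$, while $g'$ is continuous, hence bounded, on $[0,1]$. Consequently
\[
V := \int_0^\infty |g'(t)|\,dt < \infty .
\]
This is exactly the point where replacing $\varphi_\epsilon$ by the smoother $\psi_\epsilon$ helps: for $\varphi_\epsilon$ the relevant function is $\sin x / x$, whose derivative only decays like $1/x$, so that its total variation on $[0,\infty)$ is infinite, in line with Theorem~\ref{2.6} and Example~\ref{5.8}.

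Finally I would exploit the monotonicity hypothesis $\epsilon_1 \geq \epsilon_2 \geq \cdots \to 0$: for fixed $\ell \geq 1$ the numbers $\ell\epsilon_1 > \ell\epsilon_2 > \cdots > 0$ decrease to $0$, so the intervals $[\ell\epsilon_{k+1}, \ell\epsilon_k]$ are non-overlapping subintervals of $[0, \ell\epsilon_1]$. Therefore
\[
\sum_{k=1}^\infty \bigl| g(\ell\epsilon_k) - g(\ell\epsilon_{k+1}) \bigr| = \sum_{k=1}^\infty \left| \int_{\ell\epsilon_{k+1}}^{\ell\epsilon_k} g'(t)\,dt \right| \leq \sum_{k=1}^\infty \int_{\ell\epsilon_{k+1}}^{\ell\epsilon_k} |g'(t)|\,dt \leq \int_0^\infty |g'(t)|\,dt = V ,
\]
a bound independent of $\ell$; taking the supremum over $\ell$ completes the proof. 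There is no real obstacle here: the only step needing any care is the estimate $|g'(x)| = O(1/x^2)$, which is what makes $g$ of bounded variation, and the rest is the telescoping/monotonicity observation together with the product formula for the Fourier transform of a convolution.
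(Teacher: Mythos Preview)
Your proof is correct and is genuinely different from the paper's argument. You reduce the whole theorem to a single global fact---that $g(x) = (\sin x/x)^2$ has finite total variation on $[0,\infty)$---and then the monotonicity of $(\epsilon_k)$ makes the sum $\sum_k |g(\ell\epsilon_k)-g(\ell\epsilon_{k+1})|$ a partition sum dominated by $V=\int_0^\infty |g'|$, independently of $\ell$. The paper instead first reduces to ``slowly decreasing'' sequences with $\epsilon_k/\epsilon_{k+1}\le 2$ (by interpolating any fast sequence inside a slow one), factors $\widehat\varphi_{\epsilon_k}^2-\widehat\varphi_{\epsilon_{k+1}}^2$ as a product, splits into the regimes $|\epsilon_k\ell|\le 1$ and $|\epsilon_k\ell|>1$, and handles each piece by a separate telescoping argument, the second using the Mean Value Theorem and the slow-decrease assumption. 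Your route is shorter and more conceptual: it avoids the preliminary reduction entirely and makes transparent \emph{why} the smoothing $\psi_\epsilon=\varphi_\epsilon*\varphi_\epsilon$ succeeds where $\varphi_\epsilon$ fails (the derivative of $(\sin x/x)^2$ is $O(1/x^2)$ and hence integrable, while that of $\sin x/x$ is only $O(1/x)$). The paper's approach, on the other hand, produces explicit numerical bounds and mirrors the two-regime estimates used elsewhere in the article (e.g.\ Examples~\ref{3.3} and~\ref{5.7}), so it fits more naturally into the surrounding machinery.
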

\begin{proof}
First, we claim that it suffices to prove the result for the case when $\epsilon_{k}$ approaches $0$ slowly, or more specifically, when

\begin{equation*}
\frac{\epsilon_{k}}{\epsilon_{k+1}} \leq 2 \tag{5.7}
\end{equation*}

Suppose that $d_{j}$ is some sequence which goes to zero very quickly, we can always construct some sequence $\epsilon_{k}$ which approaches zero, such that (5.7) and $d_{j}=\epsilon_{s_{j}}$ hold true so that $d_{j}$ is a subsequence of the slower sequence $\epsilon_{k}$. We then have that

$$
\begin{aligned}
\left|\widehat{\psi}_{d_{j}}(\ell)-\widehat{\psi}_{d_{j+1}}(\ell)\right| & =\left|\widehat{\psi}_{\epsilon_{s_{j}}}(\ell)-\widehat{\psi}_{\epsilon_{s_{j+1}}}(\ell)\right| \\
& \leq \sum\limits_{k=s_{j}}^{s_{j+1}-1}\left|\widehat{\psi}_{\epsilon_{k}}(\ell)-\widehat{\psi}_{\epsilon_{k+1}}(\ell)\right|
\end{aligned}
$$

Summing over all $j$ 's we see that the claim follows. Thus without loss of generality we assume that (5.7) holds and we continue with the proof by noting that:

$$
\begin{aligned}
\sum\limits_{k=1}^{\infty}\left|\widehat{\psi}_{\epsilon_{k}}(\ell)-\widehat{\psi}_{\epsilon_{k+1}}(\ell)\right| & =\sum\limits_{k=1}^{\infty}\left|\widehat{\varphi}_{\epsilon_{k}}^{2}(\ell)-\widehat{\varphi}_{\epsilon_{k+1}}^{2}(\ell)\right| \\
& =\sum\limits_{k=1}^{\infty}\left|\widehat{\varphi}_{\epsilon_{k}}(\ell)+\widehat{\varphi}_{\epsilon_{k+1}}(\ell)\right|\left|\widehat{\varphi}_{\epsilon_{k}}(\ell)-\widehat{\varphi}_{\epsilon_{k+1}}(\ell)\right| \\
& =\sum\limits_{1}+\sum\limits_{2}
\end{aligned}
$$

where

$$
\sum\limits_{1}=\sum\limits_{k:\left|\epsilon_{k} \ell\right| \leq 1}\left|\widehat{\varphi}_{\epsilon_{k}}(\ell)+\widehat{\varphi}_{\epsilon_{k+1}}(\ell)\right|\left|\widehat{\varphi}_{\epsilon_{k}}(\ell)-\widehat{\varphi}_{\epsilon_{k+1}}(\ell)\right|
$$

and

$$
\sum\limits_{2}=\sum\limits_{k:\left|\epsilon_{k} \ell\right|>1}\left|\widehat{\varphi}_{\epsilon_{k}}(\ell)+\widehat{\varphi}_{\epsilon_{k+1}}(\ell)\right|\left|\widehat{\varphi}_{\epsilon_{k}}(\ell)-\widehat{\varphi}_{\epsilon_{k+1}}(\ell)\right|
$$

Fix $\ell$. Since $\widehat{\varphi}_{\epsilon_{k}}(\ell)$ is an even function of $\ell$, we may assume without loss of generality that $\ell \geq 0$. Furthermore, since $\frac{\sin x}{x}$ is decreasing with respect to $\mathrm{x}$ on $[0, \pi)$, it follows that:

$$
\begin{aligned}
\sum\limits_{1} & =\sum\limits_{k:\left|\epsilon_{k} \ell\right| \leq 1}\left|\widehat{\varphi}_{\epsilon_{k}}(\ell)+\widehat{\varphi}_{\epsilon_{k+1}}(\ell)\right|\left|\widehat{\varphi}_{\epsilon_{k}}(\ell)-\widehat{\varphi}_{\epsilon_{k+1}}(\ell)\right| \\
& \leq 2 \sum\limits_{k:\left|\epsilon_{k} \ell\right| \leq 1}\left|\widehat{\varphi}_{\epsilon_{k+1}}(\ell)-\widehat{\varphi}_{\epsilon_{k}}(\ell)\right| \\
& =2 \sum\limits_{k:\left|\epsilon_{k} \ell\right| \leq 1}\left|\frac{\sin \epsilon_{k+1} \ell}{\epsilon_{k+1} \ell}-\frac{\sin \epsilon_{k} \ell}{\epsilon_{k} \ell}\right| \\
& =2 \sum\limits_{k:\left|\epsilon_{k} \ell\right| \leq 1} \frac{\sin \epsilon_{k+1} \ell}{\epsilon_{k+1} \ell}-\frac{\sin \epsilon_{k} \ell}{\epsilon_{k} \ell} \\
& =2\left(1-\frac{\sin \epsilon_{k_{o}} \ell}{\epsilon_{k_{o}} \ell}\right)
\end{aligned}
$$

where $k_{o}$ is the first $k$ such that $\epsilon_{k_{o}} \ell \leq 1$. Furthermore, $\epsilon_{k_{o}} \ell \leq 1$ implies that $\sin 1 \leq \frac{\sin \epsilon_{k_{o}} \ell}{\epsilon_{k_{o}} \ell}$, thus $\sum\limits_{1} \leq 2(1-\sin 1)$ Thus the estimate on $\sum\limits_{2}$ is done.

In what follows next, in computing the estimate on $\sum\limits_{2}$, is where we'll use the assumption (5.7) which we mentioned earlier.

Once again we note that we can assume $\ell>0$ since $\widehat{\varphi}$ is an even function of $\ell$. Thus

$$
\begin{aligned}
\sum\limits_{2} & =\sum\limits_{k:\left|\epsilon_{k} \ell\right|>1}\left|\widehat{\varphi}_{\epsilon_{k}}(\ell)+\widehat{\varphi}_{\epsilon_{k+1}}(\ell)\right|\left|\widehat{\varphi}_{\epsilon_{k}}(\ell)-\widehat{\varphi}_{\epsilon_{k+1}}(\ell)\right| \\
& =\sum\limits_{k: \epsilon_{k} \ell>1}\left|\frac{\sin \epsilon_{k+1} \ell}{\epsilon_{k+1} \ell}-\frac{\sin \epsilon_{k} \ell}{\epsilon_{k} \ell}\right|\left|\frac{\sin \epsilon_{k+1} \ell}{\epsilon_{k+1} \ell}+\frac{\sin \epsilon_{k} \ell}{\epsilon_{k} \ell}\right| \\
& \leq \sum\limits_{k: \epsilon_{k} \ell>1} \frac{2}{\epsilon_{k+1} \ell}\left|\frac{\xi \cos \xi-\sin \xi}{\xi^{2}}\left(\epsilon_{k+1} \ell-\epsilon_{k} \ell\right)\right|
\end{aligned}
$$

where $\epsilon_{k+1} \ell \leq \xi \leq \epsilon_{k} \ell$ by the Mean Value Theorem.

But since

$$
\begin{aligned}
\left|\frac{\xi \cos \xi-\sin \xi}{\xi^{2}}\left(\epsilon_{k+1} \ell-\epsilon_{k} \ell\right)\right| & \leq \frac{\xi+1}{\xi^{2}} \ell\left(\epsilon_{k}-\epsilon_{k+1}\right) \\
& \leq \frac{\epsilon_{k} \ell+1}{\epsilon_{k+1} \ell^{2}} \ell\left(\epsilon_{k}-\epsilon_{k+1}\right)
\end{aligned}
$$

$$
\begin{aligned}
& \leq \frac{2 \epsilon_{k} \ell}{\epsilon_{k+1} \ell^{2}} \ell\left(\epsilon_{k}-\epsilon_{k+1}\right) \\
& =\frac{2 \epsilon_{k}}{\epsilon_{k+1}{ }^{2}}\left(\epsilon_{k}-\epsilon_{k+1}\right)
\end{aligned}
$$

we see that

$$
\begin{aligned}
\sum\limits_{2} & \leq \sum\limits_{k: \epsilon_{k} \ell>1} \frac{2}{\epsilon_{k+1} \ell} \frac{2 \epsilon_{k}}{\epsilon_{k+1}{ }^{2}}\left(\epsilon_{k}-\epsilon_{k+1}\right) \\
& =\sum\limits_{k: \epsilon_{k} \ell>1} \frac{4 \epsilon_{k}^{2}}{\epsilon_{k+1}^{2} \ell}\left(\frac{1}{\epsilon_{k+1}}-\frac{1}{\epsilon_{k}}\right) \\
& \leq \sum\limits_{k: \epsilon_{k} \ell>1} \frac{16}{\ell}\left(\frac{1}{\epsilon_{k+1}}-\frac{1}{\epsilon_{k}}\right) \\
& =\frac{16}{\ell}\left(\frac{1}{\epsilon_{k_{o}+1}}-\frac{1}{\epsilon_{1}}\right) \\
& \leq \frac{16}{\epsilon_{k_{o}+1} \ell} \leq 16
\end{aligned}
$$
\end{proof}

\section{Some Interpolation}\label{6}

There will be two theorems presented here. Both theorems could be proved using either of two methods. The first method involves using the method of Benedek, Calderon, and Panzone~\cite{BCP}. The second method involves using BMO estimates. In order to illustrate both methods we choose to prove Theorem~\ref{6.1} using the method of Benedek, Calderon, and Panzone~\cite{BCP} while proving Theorem~\ref{6.2} using BMO estimates.

As before, in the theorem below,

$$
D_{\epsilon} f=\varphi_{\epsilon} * f, \text { where } \varphi_{\epsilon}(x)=\frac{2}{\epsilon} 1_{[-\epsilon, \epsilon]}(x)
$$

\begin{thm}\label{6.1} If $\epsilon_{k+1} \leq r \epsilon_{k}$ where $r<1$, then for $1<p<\infty$, and for any $f$ in $L^p$, the series $\sum\limits_{k=1}^{\infty} D_{\epsilon_{k+1}} f-D_{\epsilon_{k}} f$ is unconditionally convergent in $L^p$.
\end{thm}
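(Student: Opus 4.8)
The plan is to reduce the assertion to a single uniform norm inequality via Theorem~\ref{2.4} and then to verify that inequality by Calder\'on--Zygmund theory, in the operator-kernel framework of Benedek, Calder\'on and Panzone~\cite{BCP}. By Theorem~\ref{2.4}, it is enough to find, for each $p\in(1,\infty)$, a constant $C_p$ with
$$
\sup_{M\ge 1,\ |c_k|\le 1}\Bigl\|\sum_{k=1}^{M}c_k\bigl(D_{\epsilon_{k+1}}f-D_{\epsilon_k}f\bigr)\Bigr\|_p\le C_p\|f\|_p
\qquad\text{for all } f\in L^p(\R).
$$
Setting $\psi_k=\varphi_{\epsilon_{k+1}}-\varphi_{\epsilon_k}$, the inner sum equals $K_{c,M}*f$ with $K_{c,M}=\sum_{k=1}^{M}c_k\psi_k$, a compactly supported $L^1$ function; so the task is to bound the convolution operators $f\mapsto K_{c,M}*f$ on $L^p$ uniformly in $M$ and in the coefficients $(c_k)$ with $|c_k|\le 1$.

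The $L^2$ bound is immediate from Plancherel: $\|K_{c,M}*f\|_2\le\|\widehat{K_{c,M}}\|_\infty\|f\|_2$ and $|\widehat{K_{c,M}}(t)|\le\sum_{k=1}^{\infty}\bigl|\widehat{\varphi}_{\epsilon_{k+1}}(t)-\widehat{\varphi}_{\epsilon_k}(t)\bigr|$. Since $\epsilon_{k+1}\le r\epsilon_k$, the supremum over $t$ of this last series is finite and depends only on $r$: splitting at the scale where $\epsilon_k|t|\approx 1$ and using $|\tfrac{\sin u}{u}-1|\lesssim u^2$ for small $u$ and $|\tfrac{\sin u}{u}|\le|u|^{-1}$ for large $u$, both pieces sum geometrically. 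This is exactly the computation of Example~\ref{5.7}, done there for $\epsilon_k=2^{-k}$, and it goes through for any lacunary $(\epsilon_k)$. Hence the operators $K_{c,M}*$ are uniformly bounded on $L^2(\R)$.

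The crux is the H\"ormander regularity estimate
$$
\sup_{y\ne 0}\int_{|x|>2|y|}\bigl|K_{c,M}(x-y)-K_{c,M}(x)\bigr|\,dx\le C(r),
$$
uniformly in $c$ and $M$. As $|c_k|\le 1$, it suffices to bound $\sum_k\int_{|x|>2|y|}|\psi_k(x-y)-\psi_k(x)|\,dx$. For a single normalized box one computes directly, from the measure of the symmetric difference of $[-\epsilon,\epsilon]$ and its translate by $y$, that $\int_{\R}|\varphi_\epsilon(x-y)-\varphi_\epsilon(x)|\,dx\lesssim\min(|y|/\epsilon,1)$, and moreover that on the region $|x|>2|y|$ the difference vanishes unless $\epsilon\gtrsim|y|$; hence $\int_{|x|>2|y|}|\varphi_\epsilon(x-y)-\varphi_\epsilon(x)|\,dx\lesssim \tfrac{|y|}{\epsilon}\,\mathbbm 1_{\{\epsilon\gtrsim|y|\}}$. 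Summing the two contributions coming from $\epsilon_{k+1}$ and $\epsilon_k$ over $k$, only finitely many terms survive, and along the surviving range $\epsilon_k$ grows geometrically (ratio at least $r^{-1}$) as $k$ decreases, so each sum telescopes into a geometric series with bound depending only on $r$. This gives the claimed estimate.

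With the uniform $L^2$ bound and the uniform H\"ormander condition, the Calder\'on--Zygmund theorem---equivalently the singular-integral machinery of~\cite{BCP} with scalar target---gives that each $K_{c,M}*$ is of weak type $(1,1)$, hence bounded on $L^p$ for $1<p\le 2$ by Marcinkiewicz interpolation with $L^2$, and on $L^p$ for $2\le p<\infty$ by duality, since the transposed kernel $\overline{K_{c,M}(-\,\cdot\,)}$ satisfies the same H\"ormander estimate; all constants depend only on $p$ and $r$. This is the displayed inequality, so Theorem~\ref{2.4} yields unconditional convergence of $\sum_k\bigl(D_{\epsilon_{k+1}}f-D_{\epsilon_k}f\bigr)$ in $L^p(\R)$ for every $f\in L^p(\R)$; the statement on $\T$ is handled identically (or deduced by transference). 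The one genuinely delicate point is the regularity estimate: because $\psi_k$ is a difference of indicator functions it has no pointwise smoothness, so one must work with the integrated form of the H\"ormander condition and use the restriction $|x|>2|y|$ to discard the scales $\epsilon_k\lesssim|y|$ before summing the lacunary tail.
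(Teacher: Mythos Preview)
Your proof is correct and follows essentially the same route as the paper: reduce via Theorem~\ref{2.4} to a uniform $L^p$ operator bound, obtain the $L^2$ case from Plancherel and the lacunary spectral estimate (Example~\ref{5.7}), and extend to all $1<p<\infty$ by verifying a H\"ormander regularity condition on the kernel and invoking Calder\'on--Zygmund theory \`a la~\cite{BCP}. The only difference is packaging: the paper works with the $\ell_2$-valued kernel $K(x)=[c_k(\varphi_{\epsilon_{k+1}}(x)-\varphi_{\epsilon_k}(x))]_{k=1}^M$ and checks $\int_{|x|>4|y|}\|K(x-y)-K(x)\|_{\ell_2}\,dx\le C$, whereas you use the scalar kernel $K_{c,M}=\sum_k c_k\psi_k$ directly; in both cases the computation comes down to the same geometric estimate $|y|\sum_{k:\,\epsilon_k>|y|}1/\epsilon_k\le \sum_{j\ge 0}r^j$, and your scalar framing is arguably the more direct path to the bound $\|K_{c,M}*f\|_p\le C_p\|f\|_p$ actually needed.
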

\begin{proof}
Recall that this means that we wish to show that for all $p, 1<p<\infty$, there is a constant $c_{p}$ such that

$$
\sup _{M \geq 1} \sup _{\left|c_{k}\right| \leq 1}\left\|\sum\limits_{k=1}^{M} c_{k}\left(D_{\epsilon_{k+1}} f-D_{\epsilon_{k}} f\right)\right\|_{p} \leq c_{p}\|f\|_{p}
$$

The $L^2$ version of this has already been shown in Example\ref{5.7}.

The reader might wish to look at Theorem 2.1 of Jones, Ostrovskii, and Rosenblatt~\cite{JOR} where a weak $L^1$ inequality is proved using the same method as what we are about to use. By looking at the proofs of Theorems 1 and 2 in Benedek, Calderon, and Panzone~\cite{BCP}, it can be seen that it suffices to show that the following Holder estimate holds true: For all $\mathrm{y}$,

$$
\int_{|x|>4|y|}\|K(x-y)-K(x)\|_{\ell_{2}} d x \leq C
$$

where $\mathrm{C}$ is independent of $y$, the $c_{k}$ 's and of $M$ and where $K(x)=\left[c_{k}\left(\varphi_{\epsilon_{k+1}}(x)-\varphi_{\epsilon_{k}}(x)\right)\right]_{k=1}^{M}$

We note that

$$
k(x-y)-k(x)=\left[c_{k}\left(\varphi_{\epsilon_{k+1}}-\varphi_{\epsilon_{k+1}}(x)+\varphi_{\epsilon_{k}}(x)-\varphi_{\epsilon_{k}}(x-y)\right)\right]_{k=1}^{M}
$$

and

$$
\left|\varphi_{\epsilon_{k}}(x-y)-\varphi_{\epsilon_{k}}(x)\right|=\frac{2}{\epsilon_{k}} h\left(\epsilon_{k}, x, y\right)
$$

where

$$
h\left(\epsilon_{k}, x, y\right)= \begin{cases}1_{\left[-\epsilon_{k}, \epsilon_{k}\right]}(x)+1_{\left[y-\epsilon_{k}, y+\epsilon_{k}\right]}(x) & \text { when } 2 \epsilon_{k}<y \\ 1_{\left[-\epsilon_{k}, y-\epsilon_{k}\right]}(x)+1_{\left[\epsilon_{k}, y+\epsilon_{k}\right]}(x) & \text { when } 0<y<2 \epsilon_{k} \\ 1_{\left[y-\epsilon_{k},-\epsilon_{k}\right]}(x)+1_{\left[y+\epsilon_{k}, \epsilon_{k}\right]}(x) & \text { when }-2 \epsilon_{k}<y<0 \\ 1_{\left[y-\epsilon_{k}, y+\epsilon_{k}\right]}+1_{\left[-\epsilon_{k}, \epsilon_{k}\right]}(x) & \text { when } y<-2 \epsilon_{k}\end{cases}
$$

We break up $|x|>4|y|$ into four regions: Region I: $x>0, y>0$, Region II: $x<0, y>0$, Region III: $x<0, y<0$, Region IV: $x>0, y<0$.

Region I: We note that in Region I, we have $x>0, y>0$ and $x>4 y$ We observe that when $y>2 \epsilon_{k}$ then $x>8 \epsilon_{k}$. Thus
$
\left|\varphi_{\epsilon_{k}}(x-y)-\varphi_{\epsilon_{k}}(x)\right|=0 \text { when } y>2 \epsilon_{k}.
$
Furthermore, when $2 \epsilon_{k}>y>\epsilon_{k}$, it follows that $x>4 \epsilon_{k}>y+\epsilon_{k}$, thus
$
\left|\varphi_{\epsilon_{k}}(x-y)-\varphi_{\epsilon_{k}}(x)\right|=0 \text { when } 2 \epsilon_{k}>y>\epsilon_{k}.
$
If $0<y \leq \epsilon_{k}$, then $y-\epsilon_{k}<0<x$, so
$
\left|\varphi_{\epsilon_{k}}(x-y)-\varphi_{\epsilon_{k}}(x)\right|=\frac{2}{\epsilon_{k}} 1_{\left[\epsilon_{k}, y+\epsilon_{k}\right]}(x) \text { when } 0<y \leq \epsilon_{k}.
$

Fix $y$, then

$$
\begin{aligned}
& \int_{\text {Region I }}\|K(x-y)-K(x)\|_{\ell_{2}} d x \leq \\
& \int_{\text {Region I }}\left(\sum\limits_{k=1}^{\infty}\left|c_{k}\right|^{2}\left|\varphi_{\epsilon_{k+1}}(x-y)-\varphi_{\epsilon_{k+1}}(x)\right|^{2}\right)^{1 / 2} d x+ \\
& \int_{\text {Region I }}\left(\sum\limits_{k=1}^{\infty}\left|c_{k}\right|^{2}\left|\varphi_{\epsilon_{k}}(x-y)-\varphi_{\epsilon_{k}}(x)\right|^{2}\right)^{1 / 2} d x=
\end{aligned}
$$

$$
\begin{aligned}
& \int_{\text {Region I }}\left(\sum\limits_{k: y \leq \epsilon_{k+1}}\left|c_{k}\right|^{2}\left(\frac{2}{\epsilon_{k+1}}\right)^{2} 1_{\left[\epsilon_{k+1}, y+\epsilon_{k+1}\right]}(x)\right)^{1 / 2} d x+ \\
& \int_{\text {Region I }}\left(\sum\limits_{k: y \leq \epsilon_{k}}\left|c_{k}\right|^{2}\left(\frac{2}{\epsilon_{k}}\right)^{2} 1_{\left[\epsilon_{k}, y+\epsilon_{k}\right]}(x)\right)^{1 / 2} d x \leq \\
& \int_{\text {Region I }}\left(\sum\limits_{k: y \leq \epsilon_{k+1}}\left(\frac{2}{\epsilon_{k+1}}\right)^{2} 1_{\left[\epsilon_{k+1}, y+\epsilon_{k+1}\right]}(x)\right)^{1 / 2} d x+ \\
& \int_{\text {Region I }}\left(\sum\limits_{k: y \leq \epsilon_{k}}\left(\frac{2}{\epsilon_{k}}\right)^{2} 1_{\left[\epsilon_{k}, y+\epsilon_{k}\right]}(x)\right)^{1 / 2} d x= \\
& 2 \int_{\text {Region I }}\left(\sum\limits_{k: y \leq \epsilon_{k}}\left(\frac{2}{\epsilon_{k}}\right)^{2} 1_{\left[\epsilon_{k}, y+\epsilon_{k}\right]}(x)\right)^{1 / 2} d x \\
& \leq 2 \int_{0}^{\infty} \sum\limits_{k: y \leq \epsilon_{k}}\left(\frac{2}{\epsilon_{k}}\right) 1_{\left[\epsilon_{k}, y+\epsilon_{k}\right]}(x) d x \\
& =4 y \sum\limits_{k: y \leq \epsilon_{k}}\left(\frac{1}{\epsilon_{k}}\right)
\end{aligned}
$$

Let $k_{o}+1$ be the first $k$ such that $\epsilon_{k} \leq y$. Then

$$
\begin{aligned}
4 y \sum\limits_{k: y \leq \epsilon_{k}}\left(\frac{1}{\epsilon_{k}}\right) & =4 y \sum\limits_{k=1}^{k_{o}} \frac{1}{\epsilon_{k}}=4 y \sum\limits_{j=0}^{k_{o}-1} \frac{1}{\epsilon_{k_{o}-j}} \\
& \leq 4 y \sum\limits_{j=0}^{\infty} \frac{r^{j}}{\epsilon_{k_{o}}} \leq 4 \sum\limits_{j=0}^{\infty} r^{j}<\infty
\end{aligned}
$$

It is left to the reader to show that

$$
\int_{\text {Region a }}\|K(x-y)-K(x)\|_{\ell_{2}} d x \leq 4 \sum\limits_{j=0}^{\infty} r^{j}
$$

where "a" can be replaced by II, III or IV. Combining the results from the four regions gives:

$$
\int_{|x|>4|y|}\|K(x-y)-K(x)\|_{\ell_{2}} d x \leq 16 \sum\limits_{j=0}^{\infty} r^{j}<\infty
$$

Thus the desired result is attained.
\end{proof}

We now show that the averaging version of Theorem~\ref{6.1} also holds true. To make the appropriate norm estimate it is convenient to look at the question of whether there is a bound in BMO for the partial sums of the series of differences, given that the function is in $L_{\infty}$. It is not clear whether or not this method will be successful for any sequence $n_{k}$, but certainly in the case that the sequence $\left(n_{k}\right)$ is lacunary as in Example~\ref{3.3}, this method of interpolation does indeed work.

\begin{thm}\label{6.2}  If $\left(n_{k}\right)$ is a lacunary sequence, then for any dynamical system, any $p, 1<p<\infty$, and any $f \in L^p(X)$, the series $\sum\limits_{k=1}^{\infty} M_{n_{k+1}} f-M_{n_{k}} f$ is unconditionally convergent in $L^p(X)$.
\end{thm}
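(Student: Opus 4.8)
The plan is to derive the homogeneous $L^{p}$ bound demanded by Theorem~\ref{2.4} --- namely a constant $c_{p}$ with
\[
\sup_{M\ge 1}\ \sup_{|c_{k}|\le 1}\Bigl\|\sum_{k=1}^{M}c_{k}\bigl(M_{n_{k+1}}f-M_{n_{k}}f\bigr)\Bigr\|_{p}\le c_{p}\|f\|_{p}\qquad (f\in L^{p}(X))
\]
--- by interpolating the $L^{2}$ estimate, which is already available, against an $L^{\infty}$-to-$BMO$ estimate, and then invoking Theorem~\ref{2.4} to pass to unconditional convergence. Fix $M$ and $(c_{k})$ with $|c_{k}|\le 1$ and set $Tf=\sum_{k=1}^{M}c_{k}(M_{n_{k+1}}f-M_{n_{k}}f)$. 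Since $M_{n}f=\tfrac1n\sum_{j=1}^{n}f\circ\tau^{j}$, the operator $T$ is a fixed polynomial in $U_{\tau}f=f\circ\tau$ whose coefficients depend only on $M$, $(c_{k})$ and the lacunary sequence $(n_{k})$, not on the dynamical system. By Calder\'on's transference principle --- the same device underlying the Conze argument used in Theorem~\ref{3.1} --- it therefore suffices to bound the corresponding convolution operator on $\ell^{p}(\Z)$, that is, to treat the bilateral shift on $\Z$, uniformly in $M$ and $(c_{k})$; transferring that bound back gives the displayed inequality in every $L^{p}(X)$, and Theorem~\ref{2.4} then upgrades it to unconditional convergence of $\sum_{k=1}^{\infty}M_{n_{k+1}}f-M_{n_{k}}f$ in $L^{p}(X)$ for every $f$ and every $p\in(1,\infty)$.

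On $\Z$ the operator $T$ is convolution with the finitely supported kernel $K=\sum_{k=1}^{M}c_{k}\bigl(\tfrac1{n_{k+1}}\1_{[1,n_{k+1}]}-\tfrac1{n_{k}}\1_{[1,n_{k}]}\bigr)$. The $\ell^{2}$ bound is immediate from Plancherel and Example~\ref{3.3}: since $\widehat{M_{n}\varphi}(\gamma)=m_{n}(\gamma)\widehat{\varphi}(\gamma)$,
\[
\|T\varphi\|_{2}\le\Bigl(\sup_{|\gamma|=1}\sum_{k=1}^{\infty}|m_{n_{k+1}}(\gamma)-m_{n_{k}}(\gamma)|\Bigr)\|\varphi\|_{2},
\]
and the supremum is finite, with a bound depending only on the lacunarity constant of $(n_{k})$, by the computation in Example~\ref{3.3}. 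For the $\ell^{\infty}$-to-$BMO$ bound I would fix an interval $I\subseteq\Z$, put $N=|I|$, and split $\varphi=\varphi\1_{3I}+\varphi\1_{\Z\setminus 3I}$: the local part is handled by the $\ell^{2}$ bound and Cauchy--Schwarz, $\tfrac1{|I|}\sum_{m\in I}|T(\varphi\1_{3I})(m)|\le|I|^{-1/2}\|T(\varphi\1_{3I})\|_{2}\le C\|\varphi\|_{\infty}$, while the far part needs the H\"ormander-type smoothness estimate $\sup_{|y|\le N}\sum_{|x|>2N}|K(x-y)-K(x)|\le C$ with $C$ independent of $I$, $M$ and $(c_{k})$; granting it, $T(\varphi\1_{\Z\setminus 3I})$ varies by at most $C\|\varphi\|_{\infty}$ on $I$, so $\|T\varphi\|_{BMO(\Z)}\le C\|\varphi\|_{\infty}$. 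Interpolation with the $BMO$ endpoint then yields uniform $\ell^{p}(\Z)\to\ell^{p}(\Z)$ bounds for $2\le p<\infty$, and the range $1<p\le 2$ follows by duality: the $\ell^{p}$-adjoint of $T$ is an operator of exactly the same form built from the reflected averages $\varphi\mapsto\tfrac1n\sum_{j=1}^{n}\varphi(\cdot-j)$, whose spectral multipliers are $\overline{m_{n}(\gamma)}$ and whose kernel is the reflection of $K$, so Example~\ref{3.3} and the H\"ormander estimate apply to it verbatim. (Equivalently, $K$ is a Calder\'on--Zygmund kernel, and once the $\ell^{2}$ bound is known one may simply quote the Calder\'on--Zygmund theorem for the whole range $1<p<\infty$.)

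The main obstacle is the H\"ormander smoothness estimate for $K$, and this is precisely where lacunarity is used. Writing $\psi_{k}=\tfrac1{n_{k+1}}\1_{[1,n_{k+1}]}-\tfrac1{n_{k}}\1_{[1,n_{k}]}$, the symmetric difference of $[1,n]$ and $[1+y,n+y]$ has $2\min(|y|,n)$ points, so $\sum_{x}|\psi_{k}(x-y)-\psi_{k}(x)|\le\tfrac{2\min(|y|,n_{k+1})}{n_{k+1}}+\tfrac{2\min(|y|,n_{k})}{n_{k}}$. For the finitely many indices $k$ --- their number bounded in terms of the lacunarity constant --- with $n_{k}$ or $n_{k+1}$ comparable to $N$, estimate $\sum_{|x|>2N}|\psi_{k}(x-y)-\psi_{k}(x)|$ by the trivial bound $\|\psi_{k}\|_{1}\le 2$; for indices with $n_{k}$ much larger than $N\ge|y|$ the displayed inequality gives $\sum_{|x|>2N}|\psi_{k}(x-y)-\psi_{k}(x)|\le 4N/n_{k}$, which sums geometrically over such $k$ by lacunarity; and for indices with $n_{k+1}<N$ the difference vanishes on $\{|x|>2N\}$ as soon as $|y|\le N$. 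Adding these three contributions, with the factors $|c_{k}|\le 1$ simply dropped, produces a bound depending only on the lacunarity constant, which is exactly the H\"ormander condition required above. Once that estimate is secured, the interpolation, duality and transference steps complete the argument.
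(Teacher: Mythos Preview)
Your proof is correct and follows the same overall architecture as the paper's: transfer to $\Z$, use the $\ell^{2}$ bound coming from Example~\ref{3.3}, establish an $\ell^{\infty}\to BMO$ endpoint, interpolate for $2\le p<\infty$, and finish $1<p<2$ by duality. The difference lies in how the far part of the $BMO$ estimate is handled. The paper works directly with the averages: after splitting $f=f_{1}+f_{2}$ with $f_{1}=f\chi_{\tilde I}$, it controls the oscillation of $Sf_{2}$ on $I$ by a hands-on decomposition of $M_{n_{k}}f_{2}(x)-M_{n_{k+1}}f_{2}(x)-\bigl(M_{n_{k}}f_{2}(y)-M_{n_{k+1}}f_{2}(y)\bigr)$ into pieces $B_{1},B_{2},B_{3},B_{4}$, each estimated term by term using that $|x-y|\le|I|$ and $n_{k}>|I|$ for the surviving indices. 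You instead package the same information as a H\"ormander smoothness condition on the scalar kernel $K=\sum_{k}c_{k}\psi_{k}$ and then quote the standard Calder\'on--Zygmund machinery; this is essentially the Benedek--Calder\'on--Panzone route the paper reserves for Theorem~\ref{6.1}, recast at the $BMO$ endpoint. Both methods are explicitly acknowledged by the paper as viable for either theorem. Your approach is a bit more modular and makes the role of lacunarity transparent (geometric summation of $N/n_{k}$, boundedly many transitional indices), at the cost of invoking the general $L^{\infty}\to BMO$ lemma as a black box; the paper's direct computation is self-contained but longer. One small remark: make sure your trichotomy covers the single index with $n_{k}<N\le n_{k+1}$; it does, since that $k$ falls into your ``comparable'' bin, but it is worth saying so explicitly.
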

\begin{proof}
Because of duality, it suffices to prove the result in $L^p(X)$ where $2 \leq p<\infty$. Indeed, using Theorem~\ref{2.4}, this theorem is proved if one shows that there is a constant $C_{p}$ depending only on $p$ such that for all dynamical systems and all $f \in L^p(X)$,

$$
\sup _{M \geq 1}\sup_{\left|c_{k}\right| \leq 1} \left\|\sum\limits_{k=1}^{M}c_k\left(M_{n_{k+1}} f-M_{n_{k}} f\right)\right\|_{p} \leq C_{p}\|f\|_{p}
$$

Suppose that this holds for $2 \leq p<\infty$. Then let $1<q \leq 2, h \in L^{q}(X)$, and take $p$ and $q$ to be conjugate indices. Denote by $<,>$ the usual pairing of $L^p(X)$ and $L^{q}(X)$ as dual spaces. Then we have

$$
\begin{aligned}
\left|<\sum\limits_{k=1}^{M} c_{k}\left(M_{n_{k+1}}^{\tau} h-M_{n_{k}}^{\tau} h\right), f>\right| & =\left|<h, \sum\limits_{k=1}^{M} \overline{c_{k}}\left(M_{n_{k+1}}^{\tau^{-1}} f-M_{n_{k}}^{\tau^{-1}} f\right)>\right| \\
& \leq\|h\|_{q}\left\|\sum\limits_{k=1}^{M} \overline{c_{k}}\left(M_{n_{k+1}}^{\tau^{-1}} f-M_{n_{k}}^{\tau^{-1}} f\right)\right\|_{p} \\
& \leq\|h\|_{q} C_{p}\|f\|_{p} .
\end{aligned}
$$

But then $\left\|\sum\limits_{k=1}^{M} c_{k}\left(M_{n_{k+1}}^{\tau} h-M_{n_{k}}^{\tau} h\right)\right\|_{q} \leq C_{p}\|h\|_{q}$ for all $h \in L^{q}(X)$. This proves the result for all $p, 1<p<\infty$.

But if $M \geq 1$ and $\left(c_{k}\right),\left|c_{k}\right| \leq 1$ are fixed, we can consider the linear operator $S f=$ $\sum\limits_{k=1}^{M} c_{k}\left(M_{n_{k+1}} f-M_{n_{k}} f\right)$. By the lacunarity of $\left(n_{k}\right)$ and the argument in Example~\ref{3.3}, we have

$$
\|S f\|_{2} \leq C_{2}\|f\|_{2}
$$

for all $f \in L^2(X)$, with a constant $C_{2}$ which does not depend on $M$ or $\left(c_{k}\right)$. But it is also the case that there is a constant $C_{\infty}$, not depending on $M$ or $\left(c_{k}\right)$, such that

$$
\|S f\|_{B M O} \leq C_{\infty}\|f\|_{\infty} .
$$

Using the usual interpolation theorem between $L^2$ and BMO gives then for each $p, 2 \leq$ $p<\infty$, a constant $C_{p}$ depending only on $C_{2}$ and $C_{\infty}$ such that for all $f \in L^p(X)$,

$$
\|S f\|_{p} \leq C_{p}\|f\|_{p}
$$

This is the criterion that was needed to apply Theorem~\ref{2.4} in proving unconditional convergence. See Bennett and Sharpley~\cite{BS} or Rivi\'ere~\cite{Riviere} for the appropriate interpolation theorem used here.

This argument has thus reduced the proof of Theorem~\ref{6.2} to proving a BMO estimate. The same argument as the ones used in Jones, Kaufman, Rosenblatt and Wierdl~\cite{JKRW} applies here. For simplicity we assume that $\left(n_{k}\right)$ is sufficiently lacunary for it to be the case that $\frac{n_{k+1}}{n_{k}}$ is at least 2 . This simplification is seen by writing the general lacunary sequence as a finite union of such sufficiently lacunary sequences. Also, as is typical, the norm estimate will be computed in $\mathbb{Z}$ and then transferred to the dynamical system.

Without loss of generality, we may take $f \in l_{\infty}(\mathbb{Z})$ to be positive. Fix an interval $I$ in $\mathbb{Z}$ and let $\tilde{I}$ denote an interval with the same left endpoint as $I$ but with three times the length $|I|$ of the interval $I$. Let $f_{1}(x)=f(x) \chi_{\tilde{I}}(x)$ and let $f_{2}(x)=f(x)-f_{1}(x)$. In what follows, let $x$ and $y$ be points in the interval $I$. By the linearity of $S$ it is clear that

$$
|S f(x)-S f(y)| \leq\left|S f_{1}(x)-S f_{1}(y)\right|+\left|S f_{2}(x)-S f_{2}(y)\right| .
$$

We need to show that

$$
\frac{1}{|I|} \sum\limits_{x \in I}\left|S f(x)-\frac{1}{|I|} \sum\limits_{y \in I} S f(y)\right| \leq C_{\infty}\|f\|_{\infty}
$$

We have

$$
\begin{aligned}
\frac{1}{|I|} \sum\limits_{x \in I}\left|S f(x)-\frac{1}{|I|} \sum\limits_{y \in I} S f(y)\right| & =\frac{1}{|I|} \sum\limits_{x \in I}\left|\frac{1}{|I|} \sum\limits_{y \in I} S f(x)-S f(y)\right| \\
\leq & \frac{1}{|I|^{2}} \sum\limits_{x \in I} \sum\limits_{y \in I}|S f(x)-S f(y)| \\
\leq & \frac{1}{|I|^{2}} \sum\limits_{x \in I} \sum\limits_{y \in I}\left|S f_{1}(x)-S f_{1}(y)\right| \\
& +\frac{1}{|I|^{2}} \sum\limits_{x \in I} \sum\limits_{y \in I}\left|S f_{2}(x)-S f_{2}(y)\right| \\
= & A+B
\end{aligned}
$$

We first study $A$. We have

$$
\begin{aligned}
A & =\frac{1}{|I|^{2}} \sum\limits_{x \in I} \sum\limits_{y \in I}\left|S f_{1}(x)-S f 1_{1}(y)\right| \\
& \leq \frac{1}{|I|^{2}} \sum\limits_{x \in I} \sum\limits_{y \in I} S f_{1}(x)+S f_{1}(y) \\
& =\frac{1}{|I|^{2}} \sum\limits_{x \in I} \sum\limits_{y \in I} S f_{1}(x)+\frac{1}{|I|^{2}} \sum\limits_{x \in I} \sum\limits_{y \in I} S f_{1}(y)
\end{aligned}
$$

Using H\"older's inequality, this becomes

$$
\begin{aligned}
A & \leq \frac{1}{|I|^{2}} \sum\limits_{y \in I} \sqrt{|I|}\left(\sum\limits_{x \in I} S f_{1}(x)^{2}\right)^{\frac{1}{2}}+\frac{1}{|I|^{2}} \sum\limits_{x \in I} \sqrt{|I|}\left(\sum\limits_{y \in I} S f_{1}(y)^{2}\right)^{\frac{1}{2}} \\
& \leq \frac{1}{|I|^{2}} \sum\limits_{y \in I} \sqrt{|I|}\left\|S f_{1}\right\|_{2}+\frac{1}{|I|^{2}} \sum\limits_{x \in I} \sqrt{|I|}\left\|S f_{1}\right\|_{2} \\
& \leq \frac{1}{|I|^{2}} \sum\limits_{y \in I} \sqrt{|I|} C_{2}\left\|f_{1}\right\|_{2}+\frac{1}{|I|^{2}} \sum\limits_{x \in I} \sqrt{|I|} C_{2}\left\|f_{1}\right\|_{2} \\
& \leq \frac{1}{|I|^{2}} \sum\limits_{y \in I} \sqrt{|I|} C_{2}\left\|f_{1}\right\|_{\infty} \sqrt{3|I|}+\frac{1}{|I|^{2}} \sum\limits_{x \in I} \sqrt{|I|} C_{2}\left\|f_{1}\right\|_{\infty} \sqrt{3|I|} \\
& \leq \tilde{C}_{\infty}\|f\|_{\infty} .
\end{aligned}
$$

where the constant $\tilde{C}_{\infty}$ only depends on $C_{2}$.

We now need to consider $B$ which was $\frac{1}{|I|^{2}} \sum\limits_{x \in I} \sum\limits_{y \in I}\left|S f_{2}(x)-S f_{2}(y)\right|$. First note that for $n_{k} \leq|I|$ we have $M_{n_{k}} f_{2}(z)=0$ for all $z \in I$, and hence the same is true for all but one term of the series of differences $M_{n_{k+1}} f-M_{n_{k}} f_{2}(z)$ (because for those $n_{k}$ we do not reach the support of the function $f_{2}$.) Let $k_{0}$ denote the smallest $k$ such that $n_{k}>|I|$.

Hence by the remark above,

$$
\begin{aligned}
& B \leq \frac{1}{|I|^{2}} \sum\limits_{x \in I} \sum\limits_{y \in I}\left(\left|M_{n_{k_{0}}} f_{2}(x)-M_{n_{k_{0}}} f_{2}(y)\right|+\right. \\
& \left.\quad+\sum\limits_{k=k_{0}}^{M}\left|M_{n_{k}} f_{2}(x)-M_{n_{k+1}} f_{2}(x)-\left(M_{n_{k}} f_{2}(y)-M_{n_{k+1}} f_{2}(y)\right)\right|\right) \\
& =\frac{1}{|I|^{2}} \sum\limits_{x \in I} \sum\limits_{y \in I}\left|M_{n_{k_{0}}} f_{2}(x)-M_{n_{k_{0}}} f_{2}(y)\right|+ \\
& \quad+\frac{1}{|I|^{2}} \sum\limits_{x \in I} \sum\limits_{y \in I} \sum\limits_{k=k_{0}}^{M}\left|M_{n_{k}} f_{2}(x)-M_{n_{k+1}} f_{2}(x)-\left(M_{n_{k}} f_{2}(y)-M_{n_{k+1}} f_{2}(y)\right)\right| \\
& =B_{1}+B_{2} .
\end{aligned}
$$

$$
\begin{gathered}
\left(\sum\limits_{k=k_{0}}^{M} \left\lvert\,\left(\frac{1}{n_{k}}-\frac{1}{n_{k+1}}\right) \sum\limits_{t=x+1}^{x+n_{k}} f_{2}(t)-\frac{1}{n_{k+1}} \sum\limits_{t=x+n_{k}+1}^{x+n_{k+1}} f_{2}(t)-\right.\right. \\
\left.\left.-\left(\left(\frac{1}{n_{k}}-\frac{1}{n_{k+1}}\right) \sum\limits_{t=y+1}^{y+n_{k}} f_{2}(t)-\frac{1}{n_{k+1}} \sum\limits_{t=y+n_{k}+1}^{y+n_{k+1}} f_{2}(t)\right) \right\rvert\,\right) \\
\leq \frac{1}{|I|^{2}} \sum\limits_{x \in I} \sum\limits_{y \in I}\left(\sum\limits_{k=k_{0}}^{M}\left|\left(\frac{1}{n_{k}}-\frac{1}{n_{k+1}}\right)\left(\sum\limits_{t=x+1}^{x+n_{k}} f_{2}(t)-\sum\limits_{t=y+1}^{y+n_{k}} f_{2}(t)\right)\right|\right)+ \\
+\frac{1}{|I|^{2}} \sum\limits_{x \in I} \sum\limits_{y \in I}\left(\sum\limits_{k=k_{0}}^{M}\left|\frac{1}{n_{k+1}}\left(\sum\limits_{t=x+n_{k}+1}^{x+n_{k+1}} f_{2}(t)-\sum\limits_{t=y+n_{k}+1}^{y+n_{k+1}} f_{2}(t)\right)\right|\right)
\end{gathered}
$$

$$
=B_{3}+B_{4} \text {. }
$$

Here we can estimate $B_{3}$ as follows. Consider the two cases, $x \geq y$ and $y \geq x$ separately. Assume first that $x \geq y$. Then because $k \geq k_{0}, y+n_{k} \geq x+1$ for $x, y \in I$. Hence, the difference $\left|\sum\limits_{t=x+1}^{x+n_{k}} f_{2}(t)-\sum\limits_{t=y+1}^{y+n_{k}} f_{2}(t)\right|$ is no more than $2\|f\|_{\infty}|I|$. A similar estimate holds if $y \geq x$. Thus, in any case we have

$$
\begin{aligned}
B_{3} & \leq \frac{1}{|I|^{2}} \sum\limits_{x \in I} \sum\limits_{y \in I}\left(\sum\limits_{k=k_{0}}^{M}\left(\frac{1}{n_{k}}-\frac{1}{n_{k+1}}\right) 2\|f\|_{\infty}|I|\right) \\
& \leq 2\|f\|_{\infty} \frac{1}{|I|} \sum\limits_{x \in I} \sum\limits_{y \in I}\left(\sum\limits_{k=k_{0}}^{M}\left(\frac{1}{n_{k}}-\frac{1}{n_{k+1}}\right)\right) \\
& \leq 2\|f\|_{\infty} \frac{1}{|I|} \sum\limits_{x \in I} \sum\limits_{y \in I} \frac{1}{n_{k_{0}}} \\
& \leq 2\|f\|_{\infty} \frac{1}{|I|} \sum\limits_{x \in I} \sum\limits_{y \in I} \frac{1}{|I|} \\
& \leq 2\|f\|_{\infty} .
\end{aligned}
$$

We next estimate $B_{4}$. We have

$$
\begin{aligned}
& B_{4}=\frac{1}{|I|^{2}} \sum\limits_{x \in I} \sum\limits_{y \in I}\left(\sum\limits_{k=k_{0}}^{M}\left|\frac{1}{n_{k+1}}\left(\sum\limits_{t=x+n_{k}+1}^{x+n_{k+1}} f_{2}(t)-\sum\limits_{t=y+n_{k}+1}^{y+n_{k+1}} f_{2}(t)\right)\right|\right) \\
& \leq \frac{1}{|I|^{2}} \sum\limits_{x \in I} \sum\limits_{y \in I}\left(\sum\limits_{\substack{k=k_{0} \\
n_{k+1}-n_{k}>|I|}}^{M}\left|\frac{1}{n_{k+1}}\left(\sum\limits_{t=x+n_{k}+1}^{x+n_{k+1}} f_{2}(t)-\sum\limits_{t=y+n_{k}+1}^{y+n_{k+1}} f_{2}(t)\right)\right|\right)+ \\
& \quad+\frac{1}{|I|^{2}} \sum\limits_{x \in I} \sum\limits_{y \in I}\left(\sum\limits_{\substack{k=k_{0} \\
n_{k+1}-n_{k} \leq|I|}}^{M}\left|\frac{1}{n_{k+1}}\left(\sum\limits_{t=x+n_{k}+1}^{x+n_{k+1}} f_{2}(t)-\sum\limits_{t=y+n_{k}+1}^{y+n_{k+1}} f_{2}(t)\right)\right|\right) \\
& =B_{4}(1)+B_{4}(2) .
\end{aligned}
$$

Now $B_{4}(2)$ is zero because of our lacunarity assumption; when $n_{k}>|I|$, then $n_{k+1} \geq 2 n_{k}$ and so $n_{k+1}-n_{k} \geq n_{k}>|I|$ too. For $B_{4}(1)$ we estimate as follows. Again, consider the two cases that $x \geq y$ or $y \geq x$. In the first case, we still have $x+n+k+1 \geq y+n_{k+1}$ because $n_{k+1}-n_{k}>|I|$. So as with the estimate for $B_{3}$, the difference $\sum\limits_{t=x+n_{k}+1}^{x+n_{k+1}} f_{2}(t)-$ $\sum\limits_{t=y+n_{k}+1}^{y+n_{k+1}} f_{2}(t)$ is no more in absolute value than $2\|f\|_{\infty}|I|$. Therefore,

$$
\begin{aligned}
B_{4}(1) & \leq \frac{1}{|I|^{2}} \sum\limits_{x \in I} \sum\limits_{y \in I}\left(\sum\limits_{k=k_{0}}^{M} \frac{1}{n_{k+1}}\left(2\|f\|_{\infty}|I|\right)\right) \\
& \leq 2\|f\|_{\infty} \frac{1}{|I|} \sum\limits_{x \in I} \sum\limits_{y \in I}\left(\sum\limits_{k=k_{0}}^{M} \frac{1}{n_{k+1}}\right) \\
& \leq 2\|f\|_{\infty} \frac{1}{|I|} \sum\limits_{x \in I} \sum\limits_{y \in I} \frac{1}{n_{k_{0}}} \\
& \leq C_{\infty}\|f\|_{\infty},
\end{aligned}
$$

\noindent because of the lacunarity assumption again.

Putting the estimates above together gives a constant $C_{\infty}$ depending only on $C_{2}$ and the lacunarity constant of the sequence $\left(n_{k}\right)$ such that for all $f \in L_{\infty}(X)$,

$$
\|S f\|_{B M O} \leq\|f\|_{\infty} .
$$

As explained above, this gives a constant $C_{p}$, when $1<p<\infty$, which does not depend on $M$ or $\left(c_{k}\right)$ such that for all $f \in l_{p}(\mathbb{Z})$, we have

$$
\|S f\|_{p} \leq C_{p}\|f\|_{p}
$$

One then transfers this to the dynamical system to give the bound needed to guarantee the unconditional convergence of the series of differences in $L^p(X)$.
\end{proof}

\section{Averages for Rotations on the Circle}\label{7}

There are also interesting issues with absolute and unconditional convergence of series of the form $\sum\limits_{k=1}^\infty TM_{n_k}f$ in all of the cases of the operators $T_n$ considered above.  We can generally show that these series are generically not absolutely convergent on $L^{p}$ for all $p, 1\le p \le \infty$.  The same thing is true under the weaker condition that the series is unconditionally convergent.  But then there is not any obvious Baire category argument to show that for some, or even perhaps all, increasing $(n_k)$ there would be some function $f\in L^p$ such that $\sum\limits_{k=1}^\infty M_{n_k}f$ is unconditionally convergent but not absolutely convergent.  

Instead of considering this as above in generality, we take another model to illustrate the issues: the averages by an ergodic rotation on the circle.  For $f\in C(\mathbb T)$ and $\beta \in \mathbb T$, denote by $f\circ \beta$ the function $f\circ \beta(\gamma) = f(\beta\gamma)$.  This model can be studied without having to deal with the technicalities of measure theory that come in when working with the Lebesgue spaces $L^p$. 

Let

\[M_n^\alpha f(\gamma) = \frac 1n\sum\limits_{j=1} ^n f\circ \alpha^j(\gamma)\] 
 
\noindent with mean zero $f\in C(\mathbb T)$.  By an ergodic rotation we mean an $\alpha$ which is not a root of unity.

We want to consider the behavior of the series $\mathcal Sf = \sum\limits_{k=1}^\infty M_{n_k}^\alpha f$ with $f\in C(\mathbb T)$,  Here we assume that $n_k < n_{k+1}$ for $k\ge 1$.  These types of series are interesting in that for a given mean-zero $f\in C(\mathbb T)$, we  can use the uniform convergence of $M_n^\alpha f$ to $\int f = 0$ to speed up the $n_k$ and get absolute convergence of the series $Sf$.  

We want to prove both Proposition~\ref{rotnotas} and Proposition~\ref{rotnotunc}.  We can use {\em almost invariant functions} to prove these results.

\begin{prop}\label{rotnotas} For all increasing $(n_k)$, there is a dense $G_\delta$ set $\mathcal B$ in $C(\mathbb T)$ such that for any $f \in \mathcal B$,   the series $\mathcal Sf$ is not absolutely convergent.   
\end{prop}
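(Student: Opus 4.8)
The plan is to argue by Baire category in the Banach space $C(\mathbb{T})$, showing that the set of $f$ for which $\mathcal{S}f=\sum_{k=1}^{\infty}M_{n_k}^{\alpha}f$ converges absolutely is meager, so that its complement contains (in fact equals) the dense $G_\delta$ set $\mathcal{B}$ we want. Set $\Phi(f)=\sum_{k=1}^{\infty}\|M_{n_k}^{\alpha}f\|_{\infty}\in[0,\infty]$ and $\mathcal{A}=\{f\in C(\mathbb{T}):\Phi(f)<\infty\}$, so that $f\in\mathcal{A}$ exactly when $\mathcal{S}f$ is absolutely convergent. Two easy observations set the machinery up. First, each $M_{n_k}^{\alpha}$ is an average of isometries of $C(\mathbb{T})$, hence a contraction, so $f\mapsto\|M_{n_k}^{\alpha}f\|_{\infty}$ is continuous, each partial sum $\sum_{k=1}^{K}\|M_{n_k}^{\alpha}f\|_{\infty}$ is continuous, and $\Phi$, being their supremum, is lower semicontinuous; consequently $E_N:=\{\Phi\le N\}$ is closed and $\mathcal{A}=\bigcup_{N\ge1}E_N$ is an $F_\sigma$. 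Second, since $\Phi(f+g)\le\Phi(f)+\Phi(g)$ and $\Phi(cf)=|c|\,\Phi(f)$, the set $\mathcal{A}$ is a linear subspace.

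Next I would record the dichotomy these facts produce. If some $E_N$ had nonempty interior, say $B(f_0,r)\subseteq E_N$ with $r>0$, then for every $h$ with $\|h\|_{\infty}<r$ both $f_0$ and $f_0+h$ lie in $E_N$, whence $\Phi(h)\le\Phi(f_0+h)+\Phi(f_0)\le 2N$; thus $B(0,r)\subseteq\mathcal{A}$ and, $\mathcal{A}$ being a subspace, $\mathcal{A}=C(\mathbb{T})$. Therefore it suffices to exhibit a single $f_\ast\in C(\mathbb{T})$ with $\Phi(f_\ast)=\infty$: this forces $\mathcal{A}\ne C(\mathbb{T})$, so every $E_N$ is a closed set with empty interior, hence nowhere dense, $\mathcal{A}$ is meager, and $\mathcal{B}:=C(\mathbb{T})\setminus\mathcal{A}=\bigcap_{N}\bigl(C(\mathbb{T})\setminus E_N\bigr)$ is a dense $G_\delta$ by the Baire Category Theorem; for each $f\in\mathcal{B}$ the series $\mathcal{S}f$ is not absolutely convergent.

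The heart of the matter --- and the step I expect to be the main obstacle, since $(n_k)$ may grow arbitrarily fast --- is the construction of $f_\ast$, and here I would use \emph{almost invariant functions}. Write $\alpha=e^{2\pi i\theta}$ with $\theta$ irrational; the rotation $\gamma\mapsto\alpha\gamma$ is minimal, so for any $Q$ the forward orbit $\{\alpha^{q}:q\ge Q\}$ is again dense in $\mathbb{T}$, and hence there exist arbitrarily large $q$ with $|\alpha^{q}-1|$ as small as we please. For such $q$ the character $\gamma^{q}$ is almost invariant under $\alpha$: $M_n^{\alpha}(\gamma^{q})=m_n(\alpha^{q})\gamma^{q}$, so $\|M_n^{\alpha}(\gamma^{q})\|_{\infty}=|m_n(\alpha^{q})|$, and by estimate (b) of Example~\ref{3.3}, $|m_n(\beta)-1|\le n|\beta-1|$, so $|m_n(\alpha^{q})|\ge 1/2$ whenever $n\,|\alpha^{q}-1|\le 1/2$. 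Using this I would choose inductively exponents $q_1<q_2<\cdots$ together with disjoint consecutive blocks of indices $I_1,I_2,\dots$ exhausting $\mathbb{N}$, where $I_j$ consists of the next $j^2$ indices $k$ beyond those used so far, and where $q_j$ is taken (possible by the density just noted) so large and with $|\alpha^{q_j}-1|$ so small that $n_k\,|\alpha^{q_j}-1|\le 1/2$ for every $k\in I_j$. Put $f_\ast=\sum_{j\ge1}j^{-2}\gamma^{q_j}$, which lies in $C(\mathbb{T})$ since $\sum j^{-2}<\infty$. As the $q_j$ are distinct, the $\gamma^{q_j}$ are orthonormal in $L^2(\mathbb{T})$, so for every $k$ and every $j$,
\[
\|M_{n_k}^{\alpha}f_\ast\|_{\infty}\;\ge\;\|M_{n_k}^{\alpha}f_\ast\|_{L^2(\mathbb{T})}\;=\;\Bigl(\sum_{i\ge1}i^{-4}\,|m_{n_k}(\alpha^{q_i})|^2\Bigr)^{1/2}\;\ge\;j^{-2}\,|m_{n_k}(\alpha^{q_j})|.
\]
Taking $j$ to be the unique index with $k\in I_j$ gives $\|M_{n_k}^{\alpha}f_\ast\|_{\infty}\ge\tfrac12 j^{-2}$, and since $|I_j|=j^2$,
\[
\Phi(f_\ast)=\sum_{k\ge1}\|M_{n_k}^{\alpha}f_\ast\|_{\infty}\;\ge\;\sum_{j\ge1}\ \sum_{k\in I_j}\tfrac12 j^{-2}\;=\;\sum_{j\ge1}\tfrac12\;=\;\infty,
\]
as required. (If $C(\mathbb{T})$ is taken with real scalars, replace $\gamma^{q_j}$ by $\operatorname{Re}\gamma^{q_j}$ throughout; only the numerical constants change.)

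Finally, a bookkeeping remark worth including: since every $M_{n_k}^{\alpha}$ preserves $\int_{\mathbb{T}}$ (normalized so $\int_{\mathbb{T}}1=1$), any $f$ with $\int_{\mathbb{T}}f\ne0$ already satisfies $\|M_{n_k}^{\alpha}f\|_{\infty}\ge\bigl|\int_{\mathbb{T}}f\bigr|$ for all $k$, hence $\Phi(f)=\infty$; thus $\mathcal{A}$ is automatically contained in the mean-zero subspace, and, if one prefers to keep $\mathcal{S}f$ manifestly well behaved, the entire argument may be run inside $C_0(\mathbb{T})$ instead, noting that $C_0(\mathbb{T})$ is a closed, hence nowhere dense, proper subspace of $C(\mathbb{T})$, so that meagerness of $\mathcal{A}$ in $C_0(\mathbb{T})$ still yields meagerness in $C(\mathbb{T})$.
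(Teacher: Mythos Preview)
Your proof is correct and shares the same Baire category skeleton as the paper's: write the set $\mathcal A$ of absolute convergence as a countable union of closed sets $E_N$ (your notation; the paper's $\mathcal A_C$), and show each has empty interior by reducing to the existence of functions with large $\Phi$-value. The genuine difference is in how almost invariant functions are built. The paper constructs, for each target $L$, a single real-valued mean-zero function of norm one from tent functions supported on disjoint arcs $\alpha^l C_i$, with heights graded linearly so that $\|f\circ\alpha^j-f\|_\infty\le j/L$; this makes $M_{n_k}^\alpha f$ uniformly close to $f$ for $k\le K$ and forces $\sum_{k\le K}\|M_{n_k}^\alpha f\|_\infty$ close to $K$. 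You instead exploit the eigenstructure of the rotation: characters $\gamma^{q}$ are exact eigenfunctions with $M_n^\alpha(\gamma^q)=m_n(\alpha^q)\gamma^q$, and density of the orbit lets you pick $q_j$ with $\alpha^{q_j}$ so close to $1$ that $|m_{n_k}(\alpha^{q_j})|\ge 1/2$ on a prescribed block $I_j$; a single $f_\ast=\sum_j j^{-2}\gamma^{q_j}$ then has $\Phi(f_\ast)=\infty$ via the $L^2$-orthogonality lower bound. Your reduction is slightly cleaner (one bad function suffices, rather than a family indexed by $L$), and the spectral construction is shorter and more transparent in this rotation setting; the paper's tent-function construction, on the other hand, is self-contained and would transfer more readily to general ergodic systems that need not have point spectrum. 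Your closing remark about mean-zero versus general $f$ is a useful clarification of an ambiguity in the paper's setup.
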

\begin{proof}
Consider the functions $f\in C(\mathbb T)$ such that $\mathcal Sf = \sum\limits_{k=1}^\infty M_{n_k}^\alpha f$ converges absolutely.  Denote this class of functions by $\mathcal A$.  These functions can be described as those for which there exists $C$ such that
$\sum\limits_{k=1}^\infty \|M_{n_k}^\alpha f\|_\infty \le C$.  
That is, for all $K\ge 1$, 
$\sum\limits_{k=1}^K \|M_{n_k}^\alpha f\|_\infty \le C$.  Call such a set $\mathcal A_C$. It is easy to see that each $\mathcal A_C$ is a closed set in the uniform norm.  Indeed, if $f_s$ are continuous functions converging uniformly to $f$, then for each $n$, $M_n^\alpha f_s$ converges uniformly to $M_n^\alpha f$.  So if we have a bound $\sum\limits_{k=1}^K \|M_{n_k}^\alpha f_s\|_\infty \le C$ for all $K$ and $s$, then the same thing would be true for $f$ i.e. for all $K$,  
$\sum\limits_{k=1}^K \|M_{n_k}^\alpha f\|_\infty \le C$.  

What we have is that $\mathcal A = \bigcup\limits_{C=1}^\infty \mathcal A_C$, and each $\mathcal A_C$ is a closed set.  If we can show that each $\mathcal A_C$ has no interior, then we will have $\mathcal A$ is a set of first category, and so its complement, the functions for which $\mathcal Sf$ is not absolutely convergent, would be generic - a dense $G_\delta$ set.  

Now why do the sets $\mathcal A_C$ have no interior?  If this were not the case, then there exists $f_0$ and $0 < \delta < 1$ so that for all $\|f\|_\infty \le 1$, we have $\delta f + f_0\in \mathcal A_C$.  In particular, $f_0\in \mathcal A_C$ because we can let $f =0$.  Also, then for any $f$ with $\|f\|_\infty \le 1$,  
\[\sum\limits_{k=1}^\infty \|M_{n_k}^\alpha f\|_\infty = \sum\limits_{k=1}^\infty \frac 1\delta \|M_{n_k}^\alpha \delta f + f_0 - f_0\|_\infty.\]
So by the triangle inequality, we have 
\[\sum\limits_{k=1}^\infty \|M_{n_k}^\alpha f\|_\infty \le \frac 1\delta \left (\sum\limits_{k=1}^\infty \|M_{n_k}^\alpha (\delta f+f_0))\|_\infty +                                   \sum\limits_{k=1}^\infty 
\|M_{n_k}^\alpha f_0\|_\infty \right )\le \frac {2C}\delta.\]   
But for any $K$, the construction using almost invariant functions below would actually show how to take some large $L$, and then a suitable almost invariant function $f$ of norm one, and get $\sum\limits_{k=1}^\infty \|M_{n_k}^\alpha f\|_\infty   \ge L$.  Once $L > \frac {2C}\delta$, this is a contradiction.  So each 
$\mathcal A_C$ has no non-trivial interior.   

Now we need the details of how almost invariant functions work in this context.  We start with the assumption that some $\mathcal A_C$ has non-trivial interior and conclude as above that for some $\delta$ we would have for all $f$ with $\|f\|_\infty \le 1$, $\sum\limits_{k=1}^\infty \|M_{n_k}^\alpha f\|_\infty \le \frac {2C}\delta$.  Now we are going to use almost invariant functions to show this cannot hold.

First, say you want the almost invariance for $f\in C(T)$ of norm one to happen so that for all $n_k, k=1,2,3,\dots,K$, $M_{n_k}^\alpha f$ is suitably close to $f$.  To do this we take disjoint closed arcs $C_1,C_2$ of the same length with translations $\alpha^l(C_1\cup C_2)$ pairwise disjoint for all $l=-2L,\dots,2L$.   Take isometric triangular tent functions $f_i\ge 0$, non-zero only on $C_i$, with base the whole interval $C_i$ and with $f_i = 1$ in the middle of $C_i, i=1,2$.  So these functions are also zero on all the images $\alpha^l(C_1\cup C_2)$.  We have $\int f_1 = \int f_2$.  

Take scalars $s_l > 0, l=1,\dots,2L$ that start small, increase to $1$, and then decrease to a small value, as $l$ increases.  If $L$ is large, we can do this linearly up and then down with a small change in value.  For example, take  $(s_1,\dots,s_{2L}) = (\frac 1L, \frac 2L,\dots,\frac LL,\frac LL,\dots,\frac 2L,\frac 1L)$.  Then take 

\[f = s_1f_1\circ \alpha^{-1} + s_2f_1\circ \alpha^{-2}+\dots+s_{2L-1}f_1\circ \alpha^{-(2L-1)} + s_{2L}f_1\circ \alpha^{-2L} \]
\[\large {-}	\left (s_1f_2\circ \alpha^{-1} + s_2f_2\circ \alpha^{-2}+\dots+s_{2L-1}f_2\circ \alpha^{-(2L-1)} + s_{2L}f_2\circ \alpha^{-2L}\right ).\]
	
We have chosen $f_1$ and $f_2$ so that all the terms above are pairwise disjointly supported.  Now $f$ is a sum of two parts, a positive part that is a string of small tent functions that go up in height (to more positive values with a maximum value of $1$) and then down in height back toward zero, and a negative part that is a string of small tent functions that go down in height (to more negative values with a minimum value of $-1$) and then up in height back toward zero.  This is how $f$ can be mean zero and uniform norm one.

Now when we shift by powers of $\alpha$, we will get $\|f\circ \alpha^j – f\|_\infty$ small for all $j$ with $j\le J$, and $J$ large, if we have made the step wise increase and then decrease in the $s_l$ very small (again needing $L$ very large).  In particular, for $k  \le K$, we have
\[\|\frac 1{n_k}\sum\limits_{j=1}^{n_k} f\circ \alpha^j - f\|_\infty =
\|\frac 1{n_k}\sum\limits_{j=1}^{n_k} \left (f\circ \alpha^k - f\right )\|_\infty
\le \frac 1{n_k} \sum\limits_{j=1}^{n_k} \|f\circ \alpha^j - f\|_\infty\]
\[\le \frac 1{n_k}\sum\limits_{j=1}^{n_k} \frac jL \le \frac {n_K}L.\]
Here the overestimate $\|f\circ \alpha^j - f\|_\infty\le \frac jL$ holds because of the choice of the values of $s_l$ to be changing from term to term by a value $1/L$.

Now we can begin the estimates with fixed $K \ge 8\frac {2C}\delta$.  Then we can take $L$ so large that $\frac {n_K}L \le \frac 1{10}$.  We use the triangle inequality
\[\|f\|_\infty = \|-f\|_\infty\le \|\frac 1{n_k}\sum\limits_{j=1}^{n_k} f\circ \alpha^j -f\|_\infty + \|\frac 1{n_k}\sum\limits_{j=1}^{n_k} f\circ \alpha^k \|_\infty.\]
Hence
\[\frac {2C}\delta \ge \sum\limits_{k=1}^K \|M_{n_k}^\alpha f\|_\infty \ge
\sum\limits_{k=1}^K \left (\|f\|_\infty - \|\frac 1{n_k}\sum\limits_{j=1}^{n_k} f\circ \alpha^j - f\|_\infty\right )\]
\[ \ge K\|f\|_\infty- K \frac {n_K}L\ge K(1-\frac {n_K}L)\ge \frac 9{10}K \ge \frac 9{10}\left (8\frac {2C}\delta\right ) = \frac {144}{10} \left (\frac C\delta\right ).\]
But then $2 \ge \frac {144}{10}$, which it is not.  
The conclusion is that no $\mathcal A_C$ can have non-trivial interior.
\end{proof}
                    
\begin{prop}\label{rotnotunc} For all increasing $(n_k)$, there is a dense $G_\delta$ set $\mathcal B$ in $C(\mathbb T)$ such that for any $f \in \mathcal B$,   the series $\mathcal Sf$ is not absolutely convergent.
\end{prop}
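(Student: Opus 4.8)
The plan is to run exactly the Baire category scheme already used for Proposition~\ref{rotnotas}, but now testing the \emph{partial sums} $\sum\limits_{k=1}^{M} M_{n_k}^\alpha f$ in place of the sum $\sum\limits_{k=1}^{M}\|M_{n_k}^\alpha f\|_\infty$ of their norms. First note that if $\int_{\mathbb T} f\neq 0$ then $M_{n_k}^\alpha f\to \int f$ uniformly, so the terms of $\mathcal Sf$ do not tend to $0$, $\mathcal Sf$ does not converge, and so it is certainly not unconditionally convergent; hence only the mean-zero functions need attention. For $C\ge 1$ put
\[
\mathcal A_C=\Bigl\{f\in C(\mathbb T):\textstyle\int f=0\ \text{and}\ \sup\limits_{M\ge 1}\bigl\|\sum\limits_{k=1}^{M}M_{n_k}^\alpha f\bigr\|_\infty\le C\Bigr\}.
\]
By the Remark following Theorem~\ref{2.1}, if $\mathcal Sf$ is unconditionally convergent then $\sup\limits_{M}\sup\limits_{|c_k|\le 1}\bigl\|\sum\limits_{k=1}^{M}c_k M_{n_k}^\alpha f\bigr\|_\infty<\infty$; taking $c_k\equiv 1$ and using the previous sentence for the mean-zero requirement, this forces $f\in\mathcal A_C$ for some $C$. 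So it suffices to show each $\mathcal A_C$ is closed and has empty interior in $C(\mathbb T)$; then $\bigcup_C\mathcal A_C$ is meager and $\mathcal B:=C(\mathbb T)\setminus\bigcup_C\mathcal A_C$ is a dense $G_\delta$ on which $\mathcal Sf$ is not unconditionally convergent (in fact not even convergent).

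Closedness of $\mathcal A_C$ is exactly as in Proposition~\ref{rotnotas}: $f\mapsto\int f$ is continuous, and if $f_s\to f$ uniformly then $M_n^\alpha f_s\to M_n^\alpha f$ uniformly for each fixed $n$, so for each fixed $M$ the map $g\mapsto\bigl\|\sum\limits_{k=1}^{M}M_{n_k}^\alpha g\bigr\|_\infty$ is continuous and the bound $\le C$ passes to the limit. For the empty interior, suppose some $\mathcal A_C$ contains a ball: there are $f_0$ and $0<\delta<1$ with $\delta f+f_0\in\mathcal A_C$ for all $\|f\|_\infty\le 1$. Taking $f=0$ gives $f_0\in\mathcal A_C$, so $\int f_0=0$ and hence $\int f=0$ for all admissible $f$. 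Writing $M_{n_k}^\alpha f=\tfrac1\delta\bigl(M_{n_k}^\alpha(\delta f+f_0)-M_{n_k}^\alpha f_0\bigr)$ and applying the triangle inequality yields $\sup\limits_{M}\bigl\|\sum\limits_{k=1}^{M}M_{n_k}^\alpha f\bigr\|_\infty\le\tfrac{2C}{\delta}$ for every mean-zero $f$ with $\|f\|_\infty\le 1$. Now invoke the almost invariant functions constructed in the proof of Proposition~\ref{rotnotas}: fix $K\ge 8\cdot\tfrac{2C}{\delta}$, and then choose $L$ (together with the disjoint arcs $C_1,C_2$ and the tent functions $f_1,f_2$) so large that the resulting mean-zero, norm-one $f$ satisfies $\|M_{n_k}^\alpha f-f\|_\infty\le n_K/L\le\tfrac1{10}$ for all $k\le K$. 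Then
\[
\Bigl\|\sum\limits_{k=1}^{K}M_{n_k}^\alpha f\Bigr\|_\infty\ge K\|f\|_\infty-\sum\limits_{k=1}^{K}\|M_{n_k}^\alpha f-f\|_\infty\ge K\Bigl(1-\tfrac{n_K}{L}\Bigr)\ge\tfrac{9}{10}K\ge\tfrac{9}{10}\cdot 8\cdot\tfrac{2C}{\delta}>\tfrac{2C}{\delta},
\]
a contradiction. Hence no $\mathcal A_C$ has interior, and the proposition follows.

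I expect essentially all the substance to be carried over verbatim from Proposition~\ref{rotnotas}; the only genuinely new point is the trivial observation that for a nearly $\alpha$-invariant $f$ the partial sum $\sum\limits_{k=1}^{K}M_{n_k}^\alpha f$ is uniformly close to $Kf$, so its norm is of order $K$ — this is why the ordinary partial sums (and a fortiori the rearranged ones) are unbounded. There is no technical obstacle of substance; the only thing to be careful about is keeping the quantifiers straight (the almost invariant $f$ depends on $C$, $\delta$, and $K$, and then $L$ depends on $K$ and $n_K$), exactly as in the earlier proof.
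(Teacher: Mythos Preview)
Your proof is correct, but it takes a somewhat different route from the paper's. The paper invokes the Sierpi\'nski--Heil characterization of unconditional convergence in $C(K)$: a series $\sum f_m$ in $C(\mathbb T)$ converges unconditionally if and only if $\sup_M\bigl\|\sum_{m=1}^M |f_m|\bigr\|_\infty<\infty$; it then sets $\mathcal U_C=\{f:\|\sum_{m=1}^M|M_{n_m}^\alpha f|\|_\infty\le C\text{ for all }M\}$ and argues as in Proposition~\ref{rotnotas} that each $\mathcal U_C$ is closed and nowhere dense. You instead use the much weaker necessary condition that an (unconditionally) convergent series has bounded ordinary partial sums, and show with the same almost-invariant construction that $\|\sum_{k=1}^K M_{n_k}^\alpha f\|_\infty\approx K$ blows up. Your approach is more elementary---no Sierpi\'nski---and in fact proves the nominally stronger statement that $\mathcal Sf$ generically fails to converge at all. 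The paper's use of the Sierpi\'nski criterion, on the other hand, singles out precisely the quantity $\|\sum|M_{n_m}^\alpha f|\|_\infty$ that distinguishes unconditional convergence from mere convergence, which is what feeds into the ensuing discussion of whether $\mathcal A=\mathcal U$.

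One minor remark: since you put $\int f=0$ into the definition of $\mathcal A_C$ and then speak of empty interior \emph{in $C(\mathbb T)$}, the sets $\mathcal A_C$ are already trivially nowhere dense (they sit inside the closed hyperplane of mean-zero functions). The almost-invariant argument is only genuinely needed if one works, as the paper implicitly does throughout Section~\ref{7}, inside the mean-zero subspace; your construction does produce a mean-zero norm-one $f$, so the argument is fine there too, but you might want to phrase the Baire category step inside that subspace to avoid the redundancy.
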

\begin{proof}
Similarly to the previous proof, we claim that the functions for which $\mathcal Sf$ is not unconditionally convergent comprise a generic set, formally a smaller class than the ones which are not absolutely convergent.  We do this by using the  result.  Denote the functions $f$ for which $\mathcal Sf$ is unconditionally convergent by $\mathcal U$.   B the result in Sierpinski~\cite{Serp}, or as redone in Heil~\cite{Heil}, these functions are such that there exists $C$ such that 
$\|\sum\limits_{m=1}^M \left |M_{n_m}^\alpha f\right | \|_\infty \le C$ for all $M\ge 1$.  Call this class $U_C$.  Then $\mathcal U = \bigcup\limits_{C=1}^\infty \mathcal U_C$.  By the same type of argument used above, we can show that $\mathcal U_C$ is a closed set with no interior in the uniform topology of $C(\mathbb T)$.  
\end{proof}
 
It would be a surprising result if $\mathcal Sf$ and $\mathcal Cf$ were the same.
\medskip

\noindent {\bf Conjecture}: For some (or perhaps all) increasing $(n_k)$  and some $f\in C(\mathbb T)$ we can have $Sf$ is unconditionally convergent but not absolutely convergent.
\bigskip

\begin{rem}  A {\em coboundary} for an ergodic rotation by $\alpha$ is a function $f\in C(\mathbb T)$ such that $f = F- F\circ \alpha$ for some $F\in C(\mathbb T)$.  The computation of $M_{n_k}^\alpha f$ is particular easy in this case.  Because of the telescoping of the sum that occurs, $M_{n_k}^\alpha f = \frac 1{n_k}(F\circ \alpha - F\circ \alpha^{n_k+1})$.  So if $1/n_k$ is summable, e.g. if $n_k = k^2$ for all $mk$, then $f = F - F\circ \alpha$ would be in $\mathcal A$.

In addition, it is a standard argument to show that the coboundaries under rotation by $\alpha$ are dense in $C(\mathbb T)$.  Hence, when $1/n_k$ is summable, the class $\mathcal A$ is first Baire category and dense in the uniform norm.  Of course, our argument that $\mathcal A$ is first category shows, among other things, the well-known fact is that for any ergodic rotation not every $f\in C(\mathbb T)$ is a coboundary.  
\qed
\end{rem}

\subsection{The Issue for the {\bf Conjecture}}
As we know, there are series $\mathcal S = \sum\limits_{m=1}^\infty f_m$ of continuous functions such that $\mathcal Sf$ is not absolutely convergent but it is unconditionally convergent.  We would like to construct examples of $Sf$ above for which the same thing happens.  It may be that this happens for all $S$ i.e. no matter what increasing sequence $n_k$ we choose.  But this is not clear. 

Suppose we have a sequence $(n_m)$.  Then for all $f\in C(\mathbb T)$, we have
\[\|\sum\limits _{m=1}^\infty \left |M_{n_m}^\alpha f\right | \|_\infty
\le \sum\limits _{m=1}^\infty \|M_{n_m}^\alpha f \|_\infty.\]
That is, $\mathcal A \subset \mathcal U$.  
If $\mathcal A$ and $\mathcal U$ are actually equal, it may be (but this is unclear) that then there is a constant $G$ such that for all $f\in C(\mathbb T)$,
\[\sum\limits _{m=1}^\infty \|M_{n_m}^\alpha f\|_\infty
\le G\|\sum\limits _{m=1}^\infty \left |M_{n_m}^\alpha f\right | \|_\infty.\]
This would mean that for any $M$,
\[\sum\limits _{m=1}^M \|M_{n_m}^\alpha f  \|_\infty
\le G\|\sum\limits _{m=1}^\infty \left |M_{n_m}^\alpha f\right | \|_\infty.\]

It might be possible for suitable $(n_m)$ to show that for any constant $G$ there are functions and some $M$ for which this inequality is not true.  It seems that what is needed is to get, for $m=1,\dots,M$, the norms $\|M_{n_m}^\alpha f\|_\infty$ are decreasing but not too quickly while the the functions $M_{n_m}^\alpha f$ are reasonably disjointly supported.  This approach might give us an indirect way of answering 3), essentially a Baire category approach to showing that $\mathcal U$ is not equal to $\mathcal A$.  

\subsection{Moving Averages}

There is a novel use of moving averages to relate absolute and unconditional convergence of series.  Suppose we have $(f_m)$ continuous on $\mathbb T$.  Then $\sum\limits_{k=1}^\infty \|f_k\|_\infty = \infty$ implies that there exists $v_k$ such that 
$\sum\limits_{k=1}^\infty |f_k\circ \alpha^{v_k}|$ is not bounded.  To see this just take $I_k = \{|f_k|\ge (1/2)\|f_k\|_\infty\}$ for all $m$.  These each contain non-trivial closed intervals.  So we can choose $v_k$ so that 
$\bigcap\limits_{k=1}^\infty \alpha^{-v_k}I_k \not= \emptyset$.  So actually $\sum\limits_{k=1}^\infty f_k$ is absolutely convergent if and only if ALL series of translations $\sum\limits_{k=1}^\infty f_k\circ \alpha^{v_k}$ are unconditionally convergent.  Of course, then also all the series 
$\sum\limits_{k=1}^\infty f_k\circ \alpha^{v_k}$ are absolutely convergent.   Without using all moving averages this would not hold.  There are lots of examples of this.  However, there is a trivial version in that there are series of continuous functions which are not absolutely convergent, but are unconditionally convergent.  And every series is a moving average with $v_k = 0$ for all $k$.

We can use moving averages to take advantage of the above remark.  A moving  average in this situation is a translation of the averages $M_n^\alpha f$.  To do this fix $v$ and let $M_{n,v}^\alpha f = \frac 1n\sum\limits_{j=v+1}^{v+n} f\circ \alpha^k$.  This is the same as $M_n^\alpha f \circ \alpha^v$.  Sometimes the behavior of moving averages is very different than the basic average without the translation, especially when dealing with differences between mean convergence and pointiwse convergence.  But in this case, if $f\in C(\mathbb T)$ is mean zero, then for any $v_n$, the averages $M_{n,v_n}^\alpha f$ converge uniformly to zero.  This is because the translation is actually an isometry in the uniform norm.

Now we can apply the observations above about translations to moving averages.  Take the series $Sf = \sum\limits_{k=1}^\infty M_{n_k}^\alpha f$.   Of course, the series $Sf$ is 
absolutely convergent if and only if all moving average series 
$\mathcal Mf = \sum\limits_{k=1}^\infty M_{n_k}^\alpha\circ \alpha^{v_k} f$ are absolutely convergent.  But in fact the short argument above gives the stronger implication.

\begin{prop} For $f\in C(\mathbb T)$, in the uniform norm, the series $Sf = \sum\limits_{k=1}^\infty M_{n_k}^\alpha f$ is absolutely convergent if and only if all moving average series $\mathcal Mf = \sum\limits_{k=1}^\infty M_{n_k}^\alpha\circ \alpha^{v_k} f$ are unconditionally convergent.    
\end{prop}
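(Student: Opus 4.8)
The plan is to establish the two directions separately, the forward one being immediate and the converse resting on a sliding argument together with the characterization of unconditional convergence in $C(\mathbb{T})$. First I would dispatch the easy implication: if $Sf = \sum_{k=1}^\infty M_{n_k}^\alpha f$ is absolutely convergent, then $\sum_{k=1}^\infty \|M_{n_k}^\alpha f\|_\infty < \infty$, and since rotation by $\alpha^{v_k}$ is an isometry of $C(\mathbb{T})$ in the uniform norm we have $\|(M_{n_k}^\alpha f)\circ \alpha^{v_k}\|_\infty = \|M_{n_k}^\alpha f\|_\infty$ for every $k$ and every sequence $(v_k)$. Hence every moving average series $\mathcal{M}f$ is absolutely convergent, and so unconditionally convergent by the elementary fact recalled in Section~\ref{2}. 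This is the inclusion $\mathcal{A}\subset\mathcal{U}$, now obtained for all translates simultaneously.

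For the converse I would argue by contraposition, adapting the observation made just above about moving averages. Suppose $Sf$ is not absolutely convergent, so $\sum_{k=1}^\infty \|g_k\|_\infty = \infty$ with $g_k := M_{n_k}^\alpha f$. For each $k$ with $g_k\ne 0$ set $I_k = \{\gamma\in\mathbb{T} : |g_k(\gamma)| \ge \frac12\|g_k\|_\infty\}$, which by continuity of $g_k$ is a closed set with nonempty interior, and set $I_k = \mathbb{T}$ otherwise. Since $\alpha$ is not a root of unity, rotation by $\alpha$ is minimal, so every orbit is dense; thus given a nonempty open arc $U$ and a closed arc $I$ with nonempty interior, there is $v\in\mathbb{Z}$ for which $\alpha^{-v}$ carries an interior point of $I$ into $U$, and hence $U\cap\alpha^{-v}I$ contains a nonempty closed arc. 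Iterating from $J_0 = \mathbb{T}$, I would produce integers $v_k$ and a nested sequence of nonempty closed arcs $J_k \subseteq J_{k-1}\cap \alpha^{-v_k}I_k$; by compactness pick $\gamma^*\in\bigcap_{k\ge1} J_k$. Then $\alpha^{v_k}\gamma^*\in I_k$ for all $k$, so $|(g_k\circ\alpha^{v_k})(\gamma^*)|\ge \frac12\|g_k\|_\infty$ and therefore $\sum_{k=1}^\infty |(g_k\circ\alpha^{v_k})(\gamma^*)| \ge \frac12\sum_{k=1}^\infty\|g_k\|_\infty = \infty$.

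Finally I would turn this divergence into a failure of unconditional convergence. If $\mathcal{M}f = \sum_{k=1}^\infty (M_{n_k}^\alpha f)\circ\alpha^{v_k}$ were unconditionally convergent in $C(\mathbb{T})$, the Remark following Theorem~\ref{2.1} gives a finite $C$ with $\sup_{M\ge1}\sup_{|c_k|\le1}\|\sum_{k=1}^M c_k(g_k\circ\alpha^{v_k})\|_\infty \le C$; evaluating at $\gamma^*$ and choosing unimodular $c_k$ with $c_k(g_k\circ\alpha^{v_k})(\gamma^*) = |(g_k\circ\alpha^{v_k})(\gamma^*)|$ yields $\sum_{k=1}^M |(g_k\circ\alpha^{v_k})(\gamma^*)|\le C$ for all $M$, contradicting the divergence just established. (Alternatively one may quote the Sierpi\'nski characterization used in the proof of Proposition~\ref{rotnotunc}, namely that unconditional convergence of $\sum_k h_k$ in $C(\mathbb{T})$ forces $\sup_M\|\sum_{k=1}^M|h_k|\|_\infty<\infty$.) This exhibits a $(v_k)$ for which $\mathcal{M}f$ is not unconditionally convergent, which is exactly the contrapositive, and together with the first paragraph completes the proof. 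I expect the only genuinely nontrivial point to be the sliding construction of $\gamma^*$ — producing a single point lying in all the rotated arcs $\alpha^{-v_k}I_k$ — which is precisely where minimality of the ergodic rotation enters; everything else is bookkeeping.
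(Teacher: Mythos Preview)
Your proof is correct and follows essentially the same approach as the paper. The paper's argument (given in the paragraph immediately preceding the proposition, which is stated without a separate proof) is exactly the sliding construction you describe: take $I_k=\{|g_k|\ge\tfrac12\|g_k\|_\infty\}$, choose $v_k$ so that $\bigcap_k \alpha^{-v_k}I_k\neq\emptyset$, and conclude that $\sum_k|g_k\circ\alpha^{v_k}|$ is unbounded at a point of that intersection; you have simply filled in the details the paper leaves implicit, notably the use of minimality of the ergodic rotation to guarantee the inductive choice of the $v_k$, the nested-arc compactness argument producing $\gamma^*$, and the explicit handling of the $g_k=0$ case.
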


\subsection{Absolute and Unconditional Convergence of Series of Differences of Averaging with Rotations}

We hope in the future to consider the series $D_{(n_k)}f = \sum\limits_{k=1}^\infty M_{n_{k+1}}^\alpha f - M_{n_k}^\alpha f$ for $f\in C(\mathbb T)$.  
\medskip

\noindent {\bf Question}: Is the class of $f\in C(\mathbb T)$ such $D_{(n_k)}f$ is absolutely convergent always of first Baire category?
\medskip

But nonetheless, we also ask:
\medskip

\noindent {\bf Question}: Does there exist $(n_k)$ such that $D_{(n_k)}f$ is unconditionally convergent for all $f\in C(\mathbb T)$?

\section{General Questions}\label{8}

Trivially, we can have a sequence of bounded linear operators $T_n:B_1\to B_2$ such that $Sf = \sum\limits_{n=1}^\infty T_nf$ is absolutely convergent for all $f\in B_1$.  We just take the operators to have $\sum\limits_{n=1}^\infty \|T_n\| < \infty$.  So then the series $Sf$ is always unconditionally convergent too.  
\medskip

\noindent {\bf Question 1)}: Is there a natural condition on the operators that guarantees $Sf $ is always unconditionally convergent, but also at the same time there are $f\in B_1$ such that the series $Sf$ is not absolutely convergent?
\medskip

\noindent {\bf Question 2)}: Is there a natural condition on the operators that guarantees there exists $f\in B_1$ such that $Sf $ is unconditionally convergent, but $Sf$ is not absolutely convergent?  Examples where generically unconditional convergence fails would be particularly worthwhile.
\medskip

\noindent {\bf Question 3}: Given the results in this article, it would be worthwhile to have more revealing geometric or analytical conditions that are equivalent to the series of differences of Fourier transforms.  For example, how can we better describe the condition that there is a uniform bound over all $\theta$ for $\sum\limits_{k=1}^\infty |m(e^{in_{k+1}\theta} - m(e^{in_k\theta})| $?  This would give us better insight into when the series $\sum\limits_{k=1}^\infty M_{n_{k+1}}f - M_{n_k}f$ converges unconditionally in $L^p$-norm for all $f\in L^p(X)$? 
\medskip

\noindent {\bf Question 4}:  Are there geometric properties of the series of differences like $\sum\limits_{k=1}^\infty M_{n_{k+1}}f - M_{n_k}f$ that are distinctly determined by having a uniform bound for $\sum\limits_{k=1}^\infty |m(e^{in_{k+1}\theta} - m(e^{in_k\theta})|^p $?  This article makes it clear what the answer is when $p=1$.  The extensive work on square functions
$\sum\limits_{k=1}^\infty \left |M_{n_{k+1}}f - M_{n_k}f\right |^2$ makes it clear what the answer is for $p=2$.  But what about when $1< p <\infty$?

\noindent{\bf Question 5}: Given different sequences $(n_k)$ or $\epsilon_k$ as appear in numerous places in this article, can we say more about the functions, and their class relationships, that have the associated differences converging unconditionally in norm?   When will one class be contained in the other?  When will this containment be a proper one?  And so on.
\medskip

\noindent {\bf Acknowledgement}:  The majority of the work in this article in Sections~\ref{2} through Section~\ref{6} comes from Johnson~\cite{Johnson}.  The work in Section~\ref{7} of this article was carried out as part of an Illinois Mathematics Lab research project in Fall 2024.  We thank the participants for their input in the development of the results: the student scholars were Konrad Freymiller, Michael Hunding, Rachel Li, Ian Wilson, Zhiyuan Yang, Shuwei Zhang and the graduate student mentor was Balint Gyenti.
\medskip

\end{document}